\newtheorem{thm}{Theorem}[section]
\newtheorem{lem}[thm]{Lemma}
\newtheorem{defi}[thm]{Definition}
\newtheorem{rem}[thm]{Remark}
\newtheorem{prop}[thm]{Proposition}
\newtheorem{cor}[thm]{Corollary}
\newtheorem{conjecture}[thm]{Conjecture}
\newtheorem{thmA}{Theorem}
\newcommand{\Mod}[1]{\ (\mathrm{mod}\ #1)}
\newtheorem{setup}[thm]{Setup}
\def\QQ{\mathbb{Q}}
\def\ZZ{\mathbb{Z}}
\def\FF{\mathbb{F}}
\def\TT{\mathbb{T}}
\def\frob{\text{Frob}}
\def\chibar{\bar\chi}
\def\ad{\text{Ad}}
\newcommand{\GL}{\mathrm{GL}}
\def\Ga1{\Gamma_1}
\def\rhob{\bar\rho}
\def\tr{\operatorname{tr}}
\def\red{\text{red}}
\def\univ{\text{univ}}
\def\ps{\text{pd}}
\def\spec{\text{Spec}}
\DeclareFontFamily{U}{wncy}{}
\DeclareFontShape{U}{wncy}{m}{n}{<->wncyr10}{}
 \DeclareSymbolFont{mcy}{U}{wncy}{m}{n}
 \DeclareMathSymbol{\Sh}{\mathord}{mcy}{"58} 
\begin{document}
\baselineskip 18pt

\title{On density of modular points in pseudo-deformation rings}
\author{Shaunak V. Deo} 
\email{shaunakdeo@iisc.ac.in}
\address{Department of Mathematics, Indian Institute of Science, Bangalore 560012, India}
\date{}

\subjclass[2010]{11F80(primary); 11F25, 11F33(secondary)}
\keywords{pseudo-representations; deformation rings of Galois representations; Hecke algebras; level raising}

\begin{abstract}
Given a continuous, odd, reducible and semi-simple $2$-dimensional representation $\bar\rho_0$ of $G_{\QQ,Np}$ over a finite field of odd characteristic $p$, we study the relation between the universal deformation ring of the pseudo-representation corresponding to $\bar\rho_0$ (pseudo-deformation ring) and the big $p$-adic Hecke algebra to prove that the maximal reduced quotient of the pseudo-deformation ring is isomorphic to the local component of the big $p$-adic Hecke algebra corresponding to $\bar\rho_0$ if a certain global Galois cohomology group has dimension $1$. This partially extends the results of B\"{o}ckle to the case of residually reducible representations.
We give an application of our main theorem to the structure of Hecke algebras modulo $p$.
As another application of our methods and results, we prove a result about non-optimal levels of newforms lifting $\bar\rho_0$ in the spirit of Diamond--Taylor.
This also gives a partial answer to a conjecture of Billerey--Menares.
\end{abstract}

\maketitle

\section{Introduction}

Given an odd, continuous representation $\rhob_0 : \text{Gal}(\overline{\QQ}/\QQ) \to \GL_2(\FF)$, where $\FF$ is a finite field, it is natural to ask whether we can determine all its lifts to characteristic $0$ (if any) and whether all these lifts have some arithmetic significance. More concretely, if $p$ is the characteristic of $\FF$ and $\mathcal{O}$ is the ring of integers of a finite extension of $\QQ_p$ with residue field $\FF$ and uniformizer $\varpi$, then one would like to describe all representations $\rho : \text{Gal}(\overline{\QQ}/\QQ) \to \GL_2(\mathcal{O})$ such that $\rho \Mod{\varpi} = \rhob_0$ and determine whether all such $\rho$ arise from arithmetic objects (such as modular forms). These questions and their variants have been studied extensively for past three decades or so and they have beome one of the central topics in number theory.

In his seminal paper, Mazur (\cite{M}) developed deformation theory of Galois representations to study this question in more generality. In particular, 
 after restricting ramification, he proved the existence of the universal deformation ring of $\rhob_0$ which interpolates all lifts of $\rhob_0$ (up to equivalence) to complete local Noetherian rings with residue field $\FF$ when $\rhob_0$ is absolutely irreducible. Let us denote this ring by $R_{\rhob_0}$.
In the case when $\rhob_0$ arises form an ordinary modular form, Mazur also proved that (under certain hypotheses) the ordinary quotient of $R_{\rhob_0}$ is isomorphic to the Hecke algebra acting on a space of ordinary modular forms of fixed level but arbitrary weights. 

So it is natural to ask whether one can relate $R_{\rhob_0}$ itself to an arithmetic object. In view of Mazur's theorem stated above, the Hecke algebra acting on a space of modular forms of fixed level but arbitrary weights (i.e. a `big' $p$-adic Hecke algebra) seems to be a suitable candidate to consider for this purpose.

\subsection{History}
Following this line of thought, Gouv\^{e}a conjectured, in his thesis (\cite{G}) and in subsequent work (\cite{G2}), that $R_{\rhob_0}$ is isomorphic to a suitable big $p$-adic Hecke algebra. This conjecture was further explored by Gouv\^{e}a and Mazur in \cite{GM}. They proved that $R_{\rhob_0}$ is isomorphic to a big $p$-adic Hecke algebra when $\rhob_0$ is absolutely irreducible and unobstructed (i.e. $R_{\rhob_0}$ is a power series ring of Krull dimension $4$) by constructing an \emph{infinite fern} in the universal deformation space of $\rhob_0$ using work of Coleman. Such theorems have come to known as `big' $R=\TT$ theorems in the field. Roughly speaking, big $R=\TT$ theorems imply that the set of points in the universal deformation space of $\rhob_0$ corresponding to ($p$-adic Galois representations attached to) classical modular eigenforms is Zariski dense.

Their results were extended by B\"{o}ckle (\cite{Bo}) to a wide range of representations. In particular, B\"{o}ckle (\cite{Bo}) proved that, under some mild hypothesis (popularly known as Taylor--Wiles hypotheses), if $\rhob_0$ is absolutely irreducible and it arises from modular forms, then $R_{\rhob_0}$ is isomorphic to a big $p$-adic Hecke algebra. In \cite{A}, Allen extended the results and methods of B\"{o}ckle to prove big $R = \TT$ theorems in the context of polarized Galois representations for CM fields. He also proved big $R=\TT$ theorems for Hilbert modular forms under some hypothesis. 
The results of Allen (\cite{A}) in the case of polarized Galois representations for CM fields have been extended by Hellmann--Margerin--Schraen (\cite{HMS}) by generalizing the infinite fern argument of Gouv\^{e}a--Mazur and Chenevier. 

We refer the reader to the excellent introductions of both \cite{Bo} and \cite{A} to know more about the history of big $R=\TT$ conjectures, their arithmetic consequences and a nice summary of the techniques used by them to prove big $R=\TT$ theorems. 
For a nice summary of various important properties of the big $p$-adic Hecke algebras studied in this article (including the Gouv\^{e}a-Mazur infinite fern), we refer the reader to a nicely written article of Emerton (\cite[Section 2]{Em}).
Note that in \cite{Bo}, \cite{A} and \cite{HMS}, $\rhob_0$ is assumed to be absolutely irreducible.

\subsection{Aim and Setup}\label{aimsec}
The main aim of this article is to explore big $R=\TT$ theorems for reducible $\rhob_0 : \text{Gal}(\overline{\QQ}/\QQ) \to \GL_2(\FF)$. Note that as $\rhob_0$ is reducible, the universal deformation ring of $\rhob_0$ (in the sense of Mazur) may not always exist. So the appropriate ring to consider here is the \emph{universal pseudo-deformation ring} of $\rhob_0$ i.e. the universal deformation ring of the $2$-dimensional \emph{pseudo-representation} (in the sense of Chenevier) $(\tr(\rhob_0),\det(\rhob_0))$ of $G_{\QQ,Np}$ attached to $\rhob_0$. 
The pseudo-deformation ring of $\rhob_0$ depends only on its semi-simplification.
So we assume that $\rhob_0$ is split reducible.

Note that, in \cite{C}, Chenevier uses the term \emph{determinant} instead of pseudo-representation and he formulates this notion using polynomial laws (see \cite[Section 1.5]{C}).
On the other hand, a $2$-dimensional pseudo-representation is a tuple of functions which `behaves' like the trace and determinant of a $2$-dimensional representation (see Definition~\ref{pseudodef} for its precise definition).
Moreover, in the case of dimension $2$, the theory of Chenevier determinants reduces to the theory of pseudo-representations (see \cite[Lemma 1.9]{C}).
In view of this equivalence, we work with pseudo-representations rather than the polynomial laws that Chenevier uses.


Before stating our main result, let us first set up some basic notation and define the objects that we will be studying.

Let $p$ be an odd prime and $N \geq 1$ be an integer not divisible by $p$. Let $G_{\QQ,Np}$ be the Galois group of the maximal extension of $\QQ$ unramified outside primes dividing $Np$ and $\infty$ over $\QQ$.
Let $\FF$ be a finite field of characteristic $p$ and let $W(\FF)$ be the ring of Witt vectors of $\FF$. Denote by $\omega_p$ the mod $p$ cyclotomic character of $G_{\QQ,Np}$.
 Let $\rhob_0 : G_{\QQ,Np} \to \GL_2(\FF)$ be a continuous, odd representation such that $\rhob_0 = \chibar_1 \oplus \chibar_2$ for some continuous characters $\chibar_1, \chibar_2 : G_{\QQ,Np} \to \FF^{\times}$. Let $\chibar=\chibar_1 \chibar_2^{-1}$ and $N_0$ be the \emph{tame} Artin conductor of $\rhob_0$.
Let $\mathcal{C}$ be the category of complete local Noetherian rings with residue field $\FF$.
Let $R^{\ps}_{\rhob_0}$ be the universal deformation ring of the pseudo-representation $(\tr(\rhob_0),\det(\rhob_0)) : G_{\QQ,Np} \to \FF$ in $\mathcal{C}$ and let $(R^{\ps}_{\rhob_0})^{\red}$ be its maximal reduced quotient.

If $N_0 \mid N$, then it is easy to see that $\rhob_0$ arises from a modular eigenform of level $N$ (see Lemma~\ref{modlem}).
Let $\TT(N)_{\rhob_0}$ be the big $p$-adic Hecke algebra defined in \S\ref{heckesec}. 
Roughly speaking, it is the completion of the Hecke algebra away from $Np$ acting on the space of modular forms of level $N$ and arbitrary weights at its maximal ideal corresponding to $\rhob_0$. The fact that $\rhob_0$ arises from a modular eigenform of level $N$ implies the existence of this maximal ideal.
Note that it is a reduced complete local Noetherian ring with residue field $\FF$ and  there exists a pseudo-representation $(\tau_N, \delta_N) : G_{\QQ,Np} \to \TT(N)_{\rhob_0}$ deforming $(\tr(\rhob_0),\det(\rhob_0))$ (see Lemma~\ref{heckelem}).

\subsection{Main results}
We are now ready to state our main theorem. We are stating a slightly weaker version of the theorem in order to avoid defining more notation.
\begin{thmA}[see Theorem~\ref{reducecor}, Corollary~\ref{isomcorr}, Corollary~\ref{dimcor}]
\label{thma}
Suppose $N_0 \mid N$, $\chibar|_{G_{\QQ_p}} \neq 1, \omega_p^{-1}|_{G_{\QQ_p}}$, $\dim_{\FF}(H^1(G_{\QQ,Np},\chibar)) = 1$ and $p \nmid \phi(N)$. Then:
\begin{enumerate}
\item The morphism $\Phi : R^{\ps}_{\rhob_0} \to \TT(N)_{\rhob_0}$ induced by the pseudo-representation $(\tau_N,\delta_N)$ induces an isomorphism $(R^{\ps}_{\rhob_0})^{\red} \simeq \TT(N)_{\rhob_0}$ of reduced local complete intersection rings of Krull dimension $4$.
\item Moreover, if $1 \leq \dim_{\FF}(H^1(G_{\QQ,Np},\chibar^{-1})) \leq 3$, then the map $\Phi : R^{\ps}_{\rhob_0} \to \TT(N)_{\rhob_0}$ is an isomorphism of reduced local complete intersection rings of Krull dimension $4$.
\end{enumerate}
\end{thmA}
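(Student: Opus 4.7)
The plan is to treat $\Phi$ as a surjection and show (i) it becomes an isomorphism after reducing the source, and (ii) under the stronger hypothesis, the source is itself already reduced. Surjectivity is formal: $\TT(N)_{\rhob_0}$ is topologically generated by the Hecke operators $T_\ell = \tau_N(\frob_\ell)$ for $\ell \nmid Np$, and these lie in the image of $\Phi$ by the construction of the universal pseudo-representation. Since $\TT(N)_{\rhob_0}$ is reduced (it embeds into a product of characteristic-zero Hecke eigensystems attached to classical newforms), $\Phi$ descends to a surjection $\bar\Phi : (R^{\ps}_{\rhob_0})^{\red} \twoheadrightarrow \TT(N)_{\rhob_0}$.

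For the structural claims I would appeal to a Bella\"{\i}che--Chenevier style generalized matrix algebra presentation of $R^{\ps}_{\rhob_0}$ of the sort developed earlier in the paper (cf.\ Theorem~\ref{structprop}). Under $\dim_{\FF} H^1(G_{\QQ,Np}, \chibar) = 1$ the reducibility ideal $I_{\red} \subset R^{\ps}_{\rhob_0}$ is essentially principal, and the quotient $R^{\ps}_{\rhob_0}/I_{\red}$ is governed by the deformation rings of the characters $\chibar_1, \chibar_2$; the local hypotheses $\chibar|_{G_{\QQ_p}} \neq 1, \omega_p^{-1}|_{G_{\QQ_p}}$ together with $p \nmid \phi(N)$ make each of these character deformation rings regular of Krull dimension $2$. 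Assembling the presentation, $(R^{\ps}_{\rhob_0})^{\red}$ comes out as a local complete intersection of Krull dimension $4$, matching the expected dimension of $\TT(N)_{\rhob_0}$. For part (2), the additional bound $1 \leq \dim_{\FF} H^1(G_{\QQ,Np}, \chibar^{-1}) \leq 3$ bounds the number of generators coming from the other reducible direction and forces the presentation of $R^{\ps}_{\rhob_0}$ itself to be a complete intersection, hence reduced; consequently the surjection $R^{\ps}_{\rhob_0} \twoheadrightarrow (R^{\ps}_{\rhob_0})^{\red}$ is an isomorphism, and (1) upgrades to $\Phi : R^{\ps}_{\rhob_0} \simeq \TT(N)_{\rhob_0}$.

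The most delicate step is the injectivity of $\bar\Phi$, which I would establish by Zariski density of modular (classical eigenform) points in $\spec (R^{\ps}_{\rhob_0})^{\red}$. A starting classical point exists because $N_0 \mid N$ implies $\rhob_0$ arises from a modular eigenform of level $N$ (Lemma~\ref{modlem}); from there I would propagate density via an infinite-fern argument in the spirit of Gouv\^{e}a--Mazur, Chenevier and B\"{o}ckle, deforming along the weight direction and exploiting the level-raising / unobstructedness results of the paper (e.g.\ Theorem~\ref{ramunobsprop}) to pass between irreducible components. Once density is secured, $\ker \bar\Phi$ vanishes on a Zariski-dense subset of the reduced ring and must therefore be zero.

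The main obstacle is this density step in the residually reducible setting. In the absolutely irreducible case of B\"{o}ckle the reducible locus does not arise and the infinite fern stays inside the modular locus, whereas here reducible pseudo-deformations can contribute components of $\spec R^{\ps}_{\rhob_0}$ that do not a priori carry classical lifts, and one must show they nevertheless lie in the closure of the modular locus. The hypothesis $\dim_{\FF} H^1(G_{\QQ,Np}, \chibar) = 1$ is precisely the input that keeps this locus thin enough for a component-by-component analysis to reduce the density question to the infinite-fern input.
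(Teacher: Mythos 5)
Your high-level strategy (surjectivity is formal; use reducedness of $\TT(N)_{\rhob_0}$; establish density of classical points via the Gouv\^{e}a--Mazur infinite fern) matches the paper's in broad outline, but the route you sketch to the structural claims is genuinely different from the paper's and, as stated, has several gaps.

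The most important missing idea is the passage from $R^{\ps}_{\rhob_0}$ to the honest deformation ring $R_{\rhob_x}$ of a fixed non-split extension $\rhob_x$. Under $\dim_{\FF} H^1(G_{\QQ,Np},\chibar)=1$ the map $F_x : R^{\ps}_{\rhob_0}\to R_{\rhob_x}$ is surjective with nilpotent kernel (Proposition~\ref{nilprop}, quoting [D2, Thm.~3.12]), so $(R^{\ps}_{\rhob_0})^{\red}\simeq(R_{\rhob_x})^{\red}$, and all subsequent work happens on the side of $R_{\rhob_x}$ where one can apply Mazur-style deformation theory, the local GMA description of $\rho^{\univ}|_{G_{\QQ_p}}$, and modularity lifting theorems for honest representations. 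Your proposal tries to work directly with a GMA presentation of $R^{\ps}_{\rhob_0}$ and the reducibility ideal; that alone does not produce the dimension bound. The paper's dimension and lci statements (Corollary~\ref{dimcor}) rest on the finiteness result Proposition~\ref{finprop}: three elements $\alpha,\beta,\gamma$ (encoding ordinarity and a weight normalization at $p$) cut $R_{\rhob_x}$ down to a finite $W(\FF)$-algebra after further quotienting by $p$, and this finiteness is a deep input pulled from Skinner--Wiles [SW1, Thm.~6.1] in one case and Pan [P, Thm.~5.1.2] in the other. Combining that with B\"{o}ckle's local-to-global presentation [Bo2] and the global Euler characteristic then forces $\dim R_{\rhob_x}=4$ and lci. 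Your proposed analysis of $R^{\ps}_{\rhob_0}/I_{\red}$ via character deformation rings only bounds the reducible locus (and those rings completed-tensor over $W(\FF)$ have dimension $3$, not $4$); it says nothing about the dimension of the full ring.

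Two further specific problems. First, for part (2) you argue that the bound on $\dim_{\FF}H^1(G_{\QQ,Np},\chibar^{-1})$ forces $R^{\ps}_{\rhob_0}$ to be a complete intersection ``hence reduced''; complete intersection does not imply reduced (e.g.\ $\FF[x]/(x^2)$). The paper's actual mechanism is [D2, Prop.~3.14] together with the observation that $p$ is a non-zero-divisor in $R_{\rhob_x}$ (which follows from Corollary~\ref{fincor} and Corollary~\ref{mincor}), yielding that $F_x$ is already injective and hence an isomorphism. Second, the density step you correctly flag as the main obstacle is not resolved merely by ``propagating via the infinite fern.'' The paper's Proposition~\ref{modprop} constructs, for every minimal prime $P$ of $R_{\rhob_x}$, a modular prime $Q(P)\supset P$ at which the localization $(R_{\rhob_x})_{Q(P)}$ is regular (hence has a unique minimal prime), using the finiteness of $R_k$, the small $R=\TT$ theorem of [SW1] in one case and a combination of [SW2], [Ki], [We1] and Proposition~\ref{unobsprop} in the other. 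Reducedness of $R_{\rhob_x}$ (Corollary~\ref{redcor}) is deduced from this together with Cohen--Macaulayness; it is not an input to the density argument but a consequence of the existence of smooth modular points. Finally, the injectivity of $\Psi$ is not just ``$\ker$ vanishes on a dense set'': the paper compares, for each minimal prime $P$, the dimension-$4$ domain $R_{\rhob_x}/P$ with the dimension-$\ge 4$ quotient $\TT(N)_{\rhob_0}/P'$ coming through the modular point, and uses the uniqueness of the minimal prime under $Q(P)$ to conclude $\ker\Psi\subset P$. These are the concrete steps your proposal would need to supply.
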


We prove a big $R=\TT$ theorem when $p \mid \phi(N)$ as well (see Theorem~\ref{reducecor} and Corollary~\ref{condlem}). However, we need to replace $\TT(N)_{\rhob_0}$ with a big Hecke algebra of higher level. The reason behind this is the fact that if $p \mid \phi(N)$, then it is possible that $\rhob_0$ arises from a newform of level $N'$ such that $N'>N$ and the prime factors of $N$ and $N'$ are the same (for instance, see discussion after \cite[Theorem $2.7$]{Bo}).
Note that Theorem~\ref{thma} has been proven in a special case in \cite{D2} using different methods (see \cite[Theorem $5.3$, Corollary $5.4$]{D2}).

Among all the hypotheses of Theorem~\ref{thma}, the hypothesis about cyclicity of $H^1(G_{\QQ,Np},\chibar)$ is the strongest. 
We discuss this hypothesis in \S\ref{cyc} and \S\ref{miscres}.

We say that $f$ is an eigenform of \emph{non-optimal} level lifting $\rhob_0$ if $\rhob_0$ arises from $f$, the level of the newform underlying $f$ is strictly bigger than $N_0$ (the tame Artin conductor of $\rhob_0$) and not divisible by $p$.
We have borrowed this convention from a work of Diamond and Taylor (\cite{DT}).
As an application of our methods and results, we prove a result about non-optimal levels of modular eigenforms lifting $\rhob_0$.

\begin{thmA}[See Theorem~\ref{levelthm}, Corollary~\ref{levelcor}]
\label{thmb}
Let $\ell_1,\cdots,\ell_r$ be primes such that $\ell_i \nmid N_0$, $\chibar|_{G_{\QQ_{\ell_i}}} = \omega_p^{-1}|_{G_{\QQ_{\ell_i}}}$ and $p \nmid \ell_i-1$ for all $1 \leq i \leq r$. 
Suppose $\chibar|_{G_{\QQ_p}} \neq 1, \omega_p^{-1}|_{G_{\QQ_p}}$, $\dim_{\FF}(H^1(G_{\QQ,N_0p},\chibar)) =1$ and $\chibar_2$ is unramified at $p$. 
Suppose $\chibar_1\chibar_2 = \bar\psi\omega_p^{k_0-1}$, with $\bar\psi$ unramified at $p$ and $2 < k_0 < p$. 
\begin{enumerate}
\item For every integer $k \equiv k_0 \Mod{p-1}$, there exists an eigenform $f$ of weight $k$ and some level $N'$ such that $\prod_{i=1}^{r}\ell_i \mid N'$, $\rhob_0$ arises from $f$ and $f$ is new at $\ell_i$ for every $1 \leq i \leq r$.
\item Moreover, if $p \nmid \phi(N)$, then for every integer $k \equiv k_0 \Mod{p-1}$, there exists a newform $f$ of weight $k$ and level $N_0\prod_{i=1}^{r}\ell_i$ such that $\rhob_0$ arises from $f$.
\end{enumerate}
\end{thmA}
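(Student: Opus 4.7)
The plan is to apply Theorem~A at the enlarged level $N := N_0 \prod_{i=1}^r \ell_i$ to obtain a reduced Hecke algebra of Krull dimension $4$, and then identify inside it a locus of deformations ramified (i.e., new) at every $\ell_i$ that contains classical eigenforms of every prescribed weight.

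The first step is to verify $\dim_{\FF} H^1(G_{\QQ, Np}, \bar\chi) = 1$, the central hypothesis of Theorem~A at level $N$. At each added prime $\ell_i$, the character $\bar\chi|_{G_{\QQ_{\ell_i}}} = \omega_p^{-1}$ is unramified and nontrivial (using $p \nmid \ell_i - 1$), so $H^0(G_{\QQ_{\ell_i}}, \bar\chi) = 0$, and local duality with the Euler--Poincar\'e formula gives $\dim H^1(G_{\QQ_{\ell_i}}, \bar\chi) = \dim H^0(G_{\QQ_{\ell_i}}, \omega_p^2)$, which vanishes unless $\ell_i \equiv -1 \pmod p$. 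A Poitou--Tate analysis, controlling these local groups and the dual cohomology $H^1(G_{\QQ, N_0 p}, \bar\chi^{-1}\omega_p)$ via the hypothesis at $N_0 p$, then shows the global $H^1$ does not grow when passing from $N_0 p$ to $Np$. Theorem~A therefore yields $(R^{\ps}_{\bar\rho_0})^{\red} \simeq \TT(N)_{\bar\rho_0}$ for part~(2) (where $p \nmid \phi(N)$), or its analogue at a larger level $N'$ divisible by $\prod \ell_i$ for part~(1), via the $p \mid \phi(N)$ version of Theorem~A mentioned in the introduction.

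Next, I identify inside this Krull-dimension-$4$ reduced ring the new-at-every-$\ell_i$ locus. Being unramified at $\ell_i$ is a closed condition (the pseudo-deformation factors through $G_{\QQ, (N/\ell_i)p}$), so being ramified, i.e., new at $\ell_i$, is open. The level-raising hypothesis $\bar\chi|_{G_{\QQ_{\ell_i}}} = \omega_p^{-1}$ ensures that the local pseudo-deformation space at $\ell_i$ admits Steinberg-type deformations, which glue to the global ring and witness nonemptiness of the ramified-at-$\ell_i$ locus. Since the reduced Hecke algebra is a local complete intersection of pure Krull dimension $4$, every nonempty open subscheme has dimension $4$; iterating over all $\ell_i$ therefore yields a dimension-$4$ stratum of new-at-every-$\ell_i$ deformations, within which classical modular points are Zariski-dense.

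In this dense classical locus, the hypotheses $\bar\chi_1 \bar\chi_2 = \bar\psi \omega_p^{k_0-1}$ with $\bar\psi$ and $\bar\chi_2$ unramified at $p$ and $2 < k_0 < p$ pin down a classical weight-$k_0$ point, and twists by powers of the cyclotomic character produce eigenforms of every weight $k \equiv k_0 \pmod{p-1}$, proving part~(1); the optimal-level claim in part~(2) follows because $\bar\rho_0$ has Artin conductor exactly $N_0$ and the form is new at each $\ell_i$. The main obstacle is the dimension claim in the third paragraph: a priori the $r$ simultaneous ramification conditions at the $\ell_i$ could interact cohomologically and cut the dimension below $4$, and ruling this out requires a careful Selmer-group computation in which the hypothesis $\dim H^1(G_{\QQ, N_0 p}, \bar\chi) = 1$ is essential for bounding the relevant obstructions.
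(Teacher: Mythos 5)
Your proposal runs into a fundamental obstruction at the very first step, and the subsequent steps also do not hold up; the paper's proof is structured quite differently precisely to avoid these problems.

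The hypothesis $\chibar|_{G_{\QQ_{\ell_i}}} = \omega_p^{-1}|_{G_{\QQ_{\ell_i}}}$ with $p \nmid \ell_i - 1$ allows $p \mid \ell_i + 1$, in which case $\omega_p(\frob_{\ell_i}) \equiv -1$, so $\omega_p^{-1}|_{G_{\QQ_{\ell_i}}} = \omega_p|_{G_{\QQ_{\ell_i}}}$ and hence $\chibar|_{G_{\QQ_{\ell_i}}} = \omega_p|_{G_{\QQ_{\ell_i}}}$. By Lemma~\ref{dimonelem} this forces $\dim_\FF H^1(G_{\QQ,Np},\chibar) > 1$, so the central hypothesis of Theorem~A fails at level $N$ and you cannot invoke the big $R=\TT$ theorem there. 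This is exactly why the paper does not go through the Hecke algebra at level $N$: instead it works directly in $R_{\rhob_x}(N)$, proves a structural result (Proposition~\ref{strprop}) describing $R_{\rhob_x}(N)$ explicitly as a quotient of $R_{\rhob_x}(N_0)\hat\otimes W(\FF)[[\dots]]$ by relations encoding the tame ramification at each $\ell_i$ (splitting the $\ell_i$ into the two cases $p\nmid\ell_i^2-1$ and $p\mid\ell_i+1$), and uses Lemma~\ref{dimension} to control dimension. The pivot is then to cut by an explicit ideal $I_k$ (combining the $p$-ordinarity and weight-$k$ conditions from Lemma~\ref{ord} with Steinberg-at-$\ell_i$ conditions like $\alpha_i-\ell_i\beta_i$, $h_{\ell_j}-\phi_j^{-1}$), show $R_{\rhob_x}(N)/I_k$ is finite over $W(\FF)$ of Krull dimension exactly $1$ (finiteness via Lemma~\ref{surjective}, Pan's theorem, and Proposition~\ref{finprop}; positivity via Proposition~\ref{strprop} and Krull's height theorem), and apply Skinner--Wiles modularity to each minimal prime to produce the newform.

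Two further gaps in your plan: (a) even granting a dimension-$4$ reduced Hecke algebra at level $N$, the assertion that the ramified-at-every-$\ell_i$ locus is nonempty is precisely the level-raising statement you are trying to prove; the existence of local Steinberg pseudo-deformations does not by itself produce a global ramified point, and this is exactly what the ideal $I_k$ and the nonvanishing of $R_{\rhob_x}(N)/I_k$ accomplish in the paper. (b) Twisting by powers of $\chi_p$ changes $\rhob_0$: to move from weight $k_0$ to $k_0+(p-1)$ you would twist by $\chi_p^{(p-1)/2}$, whose reduction $\omega_p^{(p-1)/2}$ is the nontrivial quadratic character, so the twisted form no longer lifts $\rhob_0$. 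The paper instead varies the weight by the explicit element $\delta_k$ cutting out $\det(\rho)|_{I_p}=\chi_p^{k-1}|_{I_p}$, which directly produces points of each congruence class of weights without twisting.
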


For $r=1$, Theorem~\ref{thmb} gives an analogue of Ribet's level raising theorem for reducible representations and it is known, under less restrictive hypotheses, for $k=k_0$  and $\rhob_0 = 1 \oplus \omega_p^{k_0 - 1}$ by work of Billerey--Menares (see \cite[Theorem $1$]{BM}).
Moreover, in \cite{BM1}, they have also proved a version of Theorem~\ref{thmb} for $r=1$ and general $\rhob_0$ (see \cite[Theorem $2$]{BM1}). 
Their methods are different from our methods.
For $r >1$, such results have been obtained by Yoo (\cite{Y}) and Wake--Wang-Erickson (\cite{WWE1}) in the case of $\rhob_0 = 1 \oplus \omega_p$ using different methods. Note that this case is excluded from our results.
To the best of our knowledge, no other such results are known for $r > 1$.

\subsection{Cyclicity of $H^1(G_{\QQ,Np},\chibar)$}
\label{cyc}
As $\chibar$ is odd, the global Euler characteristic formula implies that $H^1(G_{\QQ,Np},\chibar) \neq 0$.
In \S\ref{miscres}, we analyze the condition $\dim(H^1(G_{\QQ,Np},\chibar))=1$ to give an equivalent reformulation of this condition in terms of restriction of $\chibar$ at primes $\ell \mid Np$ and vanishing of the $\omega_p\chibar^{-1}$ component of a certain class group (see Lemma~\ref{dimonelem}).
This allows us to get a family of examples where the equality $\dim(H^1(G_{\QQ,Np},\chibar))=1$ holds (see the examples appearing after Lemma~\ref{dimonelem}).

For instance, when $N_0=1$, we get:
\begin{enumerate}
\item Suppose $p > 5$ and $2 < k < p-1$ is an even integer.
Then $\dim(H^1(G_{\QQ,p},\omega_p^{k-1}))=1$ if one of the following conditions hold:
\begin{itemize}
\item $k=4,6$.
For $k=4$, this follows from work of Kurihara (\cite[Corollary 3.8]{K}) and for $k=6$, this follows from \cite[Corollary 7.1]{Kup}.
\item $p \equiv 3 \pmod{4}$ and $k = {\dfrac{p+1}{2}}$.
This follows from work of Osburn (\cite[Theorem 1.1]{O}).
\end{itemize}
Suppose $k$ and $p$ satisfy the conditions given above and $\ell_1,\cdots,\ell_r$ are primes such that $p \nmid \prod_{i=1}^{r}\ell_i$ and $ \ell_i^{k-2} \not\equiv 1 \pmod{p}$ for all $1 \leq i \leq r$.
Then Lemma~\ref{dimonelem} implies that $\dim(H^1(G_{\QQ,p\prod_{i=1}^{r}\ell_i},\omega_p^{k-1}))=1$.
 Hence, in all these cases, the tuple $(p, \prod_{i=1}^{r}\ell_i, \omega_p^a \oplus \omega_p^{a+k-1})$, for any integer $a$, satisfies the hypotheses of Theorem~\ref{thma}.
In these cases, if either $k=4$ or $p \equiv 3 \pmod{4}$, then $ \ell_i^{k-2} \not\equiv 1 \pmod{p}$ if and only if $\ell_i \not\equiv \pm 1 \pmod{p}$.
\item If $k>2$ is an even integer and $p > k+1$ is a prime such that $p \nmid B_k$ (where $B_k$ is $k$-th Bernoulli number), then using Herbrand-Ribet theorem and reflection principle, we get that $\dim(H^1(G_{\QQ,p},\omega_p^{1-k}))=1$.
Moreover, if $\ell_1,\cdots,\ell_r$ are primes such that $p \nmid \prod_{i=1}^{r}\ell_i$ and $p \nmid \ell_i^k-1$ for all $1 \leq i \leq r$, then Lemma~\ref{dimonelem} implies that  $\dim(H^1(G_{\QQ,p\prod_{i=1}^{r}\ell_i},\omega_p^{1-k}))=1$.
Therefore, the tuple $(p, \prod_{i=1}^{r}\ell_i, \omega_p^a \oplus \omega_p^{a+k-1})$, for any integer $a$, satisfies the hypotheses of Theorem~\ref{thma}.
Note that we have only excluded finitely many primes $p$ here.
Given a prime $p > k+1$, the set $\{\ell \mid \ell \text{ is a prime, } p \nmid \ell^k-1\}$ is infinite with Dirichlet density $\dfrac{p-m-1}{p-1} \geq \dfrac{\phi(p-1)}{p-1}$, where $m$ is the gcd of $k$ and $p-1$ and $\phi$ is Euler's totient function.
\item In general, if Vandiver's conjecture is true, $p > 5$, $k$ is an even integer such that $k \not\equiv 0,2 \mod{p-1}$ and $\ell_1,\cdots,\ell_r$ are primes such that $p \nmid \prod_{i=1}^{r}\ell_i$ and $p \nmid \ell_i^{k-2}-1$ for all $1 \leq i \leq r$, then $\dim(H^1(G_{\QQ,p\prod_{i=1}^{r}\ell_i},\omega_p^{k-1}))=1$ and hence, the tuple $(p, \prod_{i=1}^{r}\ell_i, \omega_p^a \oplus \omega_p^{a+k-1})$, for any integer $a$, satisfies the hypotheses of Theorem~\ref{thma}.
\end{enumerate}
On the other hand, if $\ell$ is a prime such that $\ell \equiv \pm 1 \pmod{p}$, $N$ is an integer divisible by $\ell$ and $k$ is an even integer, then $\dim(H^1(G_{\QQ,N p},\omega_p^{k-1})) \geq 2$ (see Lemma~\ref{cyclem}).
Thus, when $N_0=1$, $(7, 13, 1 \oplus \omega_p^3)$ is the `smallest' tuple which satisfies all the hypotheses of Theorem~\ref{thma} except the cyclicity of $H^1(G_{\QQ,Np},\chibar)$.

\subsection{Sketch of proofs of main results}\label{sketchsec}
We now give a brief sketch of the proofs of our main theorems. Our basic strategy is similar to that of B\"{o}ckle (\cite{Bo}) and Allen (\cite{A}) which we will describe now.
B\"{o}ckle and Allen prove big $R=\TT$ theorems by proving that every component of $\spec(R_{\rhob_0})$ contains a modular point (i.e. a prime corresponding to a modular eigenform of level $N$ lifting $\rhob_0$) which is also smooth.
To prove the existence of such points, both B\"{o}ckle and Allen first show (using different methods) that certain locus in the universal deformation space is defined by a suitable number of equations.
This is a key step in both \cite{Bo} and \cite{A}.
Then they combine this result with the corresponding small $R=\TT$ theorems and a result of B\"{o}ckle (\cite[Theorem $2.4$]{Bo2}) about a presentation of the universal deformation rings (or an analogue of this result) to show that the universal deformation rings are local complete intersection rings of appropriate dimension. 
Using this property of the universal deformation rings, along with the number of equations giving the chosen locus and corresponding small $R=\TT$ theorems, they prove the existence of the desired modular points.

We also follow similar steps to prove Theorem~\ref{thma}. However, as we are dealing with residually reducible representations, there are some obstacles.
 Note that the presentation result of B\"{o}ckle (\cite[Theorem $2.4$]{Bo2}) states that the universal deformation ring of a representation $\rhob : G_{\QQ,Np} \to \GL_2(\FF)$ (when it exists) is a quotient of the power series ring $W(\FF) \llbracket X_1,\cdots,X_{d_1} \rrbracket$ by an ideal $J$ generated by at most $d_2$ elements, where $d_i=\dim_{\FF}(H^i(G_{\QQ,Np},\text{Ad}(\rhob)))$ for $i=1,2$ and $\text{Ad}(\rhob)$ is the \emph{adjoint representation} associated to $\rhob$.
We do not have an analogue of this result for the pseudo-deformation rings and very few `small' $R=\TT$ theorems are available. 
Moreover, the pseudo-deformation rings are also not always local complete intersection rings (see \cite[Corollary $4.20$]{D2} for example).
Hence, we need to use slightly different techniques in order to make our strategy work. 

As a first step, we replace the pseudo-deformation ring $R^{\ps}_{\rhob_0}$ with a deformation ring.
To be precise, let $x \in H^1(G_{\QQ,Np},\chibar)$ be a non-zero element, $\rhob_x$ be the non-trivial extension of $\chibar_2$ by $\chibar_1$ corresponding to $x$ and $R_{\rhob_x}$ be the universal deformation ring of $\rhob_x$ (in the sense of Mazur) which exists due to \cite{Ra}. 
Since we are assuming that $\dim_{\FF}(H^1(G_{\QQ,Np},\chibar))=1$, \cite[Theorem $3.12$]{D2} implies that the map $R^{\ps}_{\rhob_0} \to \TT(N)_{\rhob_0}$ factors through  $R_{\rhob_x}$. 

Note that, the existence of a map $R_{\rhob_x} \to \TT(N)_{\rhob_0}$ is equivalent to the existence of a deformation of $\rhob_x$ to $\TT(N)_{\rhob_0}$.
However, in general, a deformation of $\rhob_x$ to $\TT(N)_{\rhob_0}$ may not exist and consequently, there may not be any map from $R_{\rhob_x}$ to $\TT(N)_{\rhob_0}$. 
But \cite[Theorem $3.12$]{D2} implies that such a deformation does exist when $\dim_{\FF}(H^1(G_{\QQ,Np},\chibar))=1$.

To illustrate this point further, we construct, using the assumption $\dim_{\FF}(H^1(G_{\QQ,Np},\chibar))=1$, a deformation $\rho : G_{\QQ,Np} \to \GL_2(\TT(N)_{\rhob_0})$ of $\rhob_x$ such that $\tr(\rho) = \tau_N$ and $\det(\rho)=\delta_N$ (see Lemma~\ref{factorlem} and its proof). 
This, along with the universal properties of $R^{\ps}_{\rhob_0}$ and $R_{\rhob_x}$, also allows us to conclude that the map $R^{\ps}_{\rhob_0} \to \TT(N)_{\rhob_0}$ factors through  $R_{\rhob_x}$. 
So in the rest of the article, we compare $R_{\rhob_x}$ with $\TT(N)_{\rhob_0}$.


Let $\rho^{\univ}$ be the universal deformation of $\rhob_x$. As a key step in our proof, we obtain a description of $R_{\rhob_x}[\rho^{\univ}(G_{\QQ_p})]$ in the form of a Generalized Matrix Algebra (GMA) (in the sense of \cite{BC}) and use it to obtain elements $\alpha, \beta, \delta_k \in R_{\rhob_x}$ for every $k \geq 2$ such that if $I_k = (\alpha,\beta,\delta_k)$, then:
\begin{enumerate}
 \item\label{point1} $R_{\rhob_x}/I_k$ is a finite $W(\FF)$-algebra of Krull dimension $1$,
\item\label{point2} $\rho^{\univ} \Mod{I_k}$ is $p$-ordinary with $\det(\rho^{\univ} \Mod{I_k})=\epsilon_k\chi_p^{k-1}$, where $\epsilon_k$ is a character of finite order and $\chi_p$ is the $p$-adic cyclotomic character.
\end{enumerate}
Note that the proof of the Claim~\eqref{point2} above follows directly from the definitions of the elements $\alpha, \beta,$ and $\delta_k $.
We split the proof of Claim~\eqref{point1} above in two cases: one in which the hypotheses of Skinner-Wiles \cite{SW1} hold and one where they do not hold.
In the first case, we use the small $R=\TT$ theorem proved by Skinner--Wiles (\cite[Theorem $6.1$]{SW1}) to conclude the finiteness, while in the second case we use a finiteness result proved by Pan (\cite[Theorem $5.1.2$]{P}). This also implies that $R_{\rhob_x}$ is a local complete intersection ring of Krull dimension $4$.

We then combine the results from previous paragraph with an appropriate modularity lifting theorem to conclude the existence of a modular point in every component of $\spec(R_{\rhob_x})$.
To prove the smoothness of these points, we either use a small $R=\TT$ theorem (when it is available) or a combination of other appropriate results.
To be precise, in the first case, we only use \cite[Theorem $6.1$]{SW1} to conclude the existence of smooth modular points.
However in the second case, there are no small $R=\TT$ theorems available.
So we combine a modularity lifting result of Skinner-Wiles (\cite[Theorem A]{SW2}), results of Kisin (\cite{Ki}) and Weston (\cite{We1}) about the smoothness of the universal deformation ring of the $p$-adic Galois representation attached to a modular eigenform and Proposition~\ref{unobsprop} to conclude the existence of smooth modular
points in every component of $\spec(R_{\rhob_x})$.
Combining the existence of smooth modular points in every component of $\spec(R_{\rhob_x})$ with the Gouv\^{e}a--Mazur infinite fern argument gives us Theorem~\ref{thma}.

Since Pan's finitness result has less restrictive hypotheses than those of \cite[Theorem $6.1$]{SW1}, all the arguments that we use in the second case also work in the first case. 
However, we prefer to keep different proofs for both the cases as their arguments are different and the proof in the first case is shorter.
Note that in the second case, one can prove small $R=\TT$ theorems under some hypotheses using either ordinary $R=\TT$ theorem proved by Wake--Wang-Erickson (\cite{WWE}) or the results proved by Berger--Klosin (\cite{BeKi}) and then use the arguments of the first case of the main theorem to prove a big $R=\TT$ theorem under those hypotheses. 
However this will only cover a small proportion of the cases not satisfying the hypotheses of \cite{SW1}.

To prove Theorem~\ref{thmb}, we combine Proposition~\ref{finprop} along with the main theorem of \cite{SW2} and \cite[Theorem $4.7$]{Bo1} to get a description of the universal deformation ring of $\rhob_x$ for $G_{\QQ,Np}$ (where $N= N_0\prod_{i=1}^{r}\ell_i$) in terms of the universal deformation ring of $\rhob_x$ for $G_{\QQ,N_0p}$ (in the spirit of \cite[Theorem $4.7$]{Bo1}).
We then use this description along with an analogue of Corollary~\ref{fincor} and the modularity lifting theorem of \cite{SW2} to conclude the theorem.

\subsection{Remarks on general case}
Note that we prove our main theorem by relating $\TT(N)_{\rhob_0}$ with the universal deformation ring (in the sense of Mazur) of a non-split reducible representation and proving that they are isomorphic (see Theorem~\ref{mainthm}).
As mentioned in \S\ref{sketchsec}, there may not be a representation over $\TT(N)_{\rhob_0}$ giving rise to the pseudo-representation $(\tau_N,\delta_N)$ when $\dim_{\FF}(H^1(G_{\QQ,Np},\chibar))$ $> 1$ and $\dim_{\FF}(H^1(G_{\QQ,Np},\chibar^{-1})) > 1$.
As a result, we cannot compare $\TT(N)_{\rhob_0}$ with the universal deformation ring of a non-split reducible representation.

But even in this case,  one can use Pan's finiteness result (\cite[Theorem $5.1.2$]{P}) to get an analogue of Proposition~\ref{finprop} (which is a key ingredient in the proof our main theorem) as long as $\chibar|_{G_{\QQ_p}} \neq 1$ and the dimensions of the images of both the restriction maps $H^1(G_{\QQ,Np},\chibar) \to H^1(G_{\QQ_p},\chibar)$ and $H^1(G_{\QQ,Np},\chibar^{-1}) \to H^1(G_{\QQ_p},\chibar^{-1})$ are $1$.
However, in order to conclude the existence of a modular point in every component of $\spec(R^{\ps}_{\rhob_0})$ using our arguments, we need to know that the dimension of every component of $\spec(R^{\ps}_{\rhob_0})$ is at least $4$. 
But such results are not known when $\dim_{\FF}(H^1(G_{\QQ,Np},\chibar)) >1$ and $\dim_{\FF}(H^1(G_{\QQ,Np},\chibar^{-1})) >1$. 
If one can prove that the dimension of every component of $\spec(R^{\ps}_{\rhob_0})$ is at least $4$, then our proof in the second case (after replacing modularity lifting results of Skinner-Wiles (\cite{SW1}, \cite{SW2}) with a modularity lifting result of Pan (\cite[Theorem $1.0.2$]{P})) will give us big $R=\TT$ theorems in almost all such cases.

\subsection{ Structure of the paper}
 In \S\ref{prelimsec}, we list all the notations and describe the setup that we will be working with. Then we define various deformation rings that we will be working with and gather some background results and miscellaneous results which will be used in the proof of the main theorem.
In \S\ref{defosec}, we study properties of $R_{\rhob_x}$ to prove some results about its structure. We also prove the key finiteness result mentioned above and the existence of a smooth modular point in every component of $\spec(R_{\rhob_x})$. This section forms the backbone of this article.
In \S\ref{mainsec}, we review definitions and properties of the big $p$-adic Hecke algebras and prove the main theorem. We also give an application of our main theorem to Hecke algebras modulo $p$.
In \S\ref{levelsec}, we first prove some preliminary results and then combine them with the finiteness result from \S\ref{defosec} to prove Theorem~\ref{thmb}. We have tried to keep this section self-contained and independent from all sections other than \S\ref{defosec}. 

\subsection{Acknowledgments}
I would like to thank Patrick Allen, Gebhard B\"{o}ckle, Najmuddin Fakhruddin, Bharathwaj Palvannan and Gabor Wiese for helpful conversations on the topic.
I would also like to thank the anonymous referees for a careful reading of the manuscript and for providing numerous comments and suggestions which greatly helped in improving the exposition.

\section{Preliminaries}
\label{prelimsec}
In this section, we will collect some notation, definitions and preliminary results that will be used later in the article. 

\subsection{Notations and setup}
\label{notsec}
We keep the notation developed in the previous section and also introduce some additional notation here.
For an integer $M$, let $G_{\QQ,Mp}$ be the Galois group of the maximal extension of $\QQ$ unramified outside primes dividing $Mp$ and $\infty$ over $\QQ$.
 For a prime $\ell$, denote the absolute Galois group of $\QQ_{\ell}$ by $G_{\QQ_{\ell}}$ and its inertia subgroup by $I_{\ell}$. 
Fix an embedding $\iota : \overline{\QQ} \to \mathbb{C}$ and for every prime $\ell$, fix an embedding $\iota_{\ell} : \overline{\QQ} \to \overline{\QQ}_{\ell}$.
For a prime $\ell$ and an integer $M$, denote by $i_{\ell,M} : G_{\QQ_{\ell}} \to G_{\QQ,Mp}$ the map induced by $\iota_{\ell}$.

For a representation $\rho$ of $G_{\QQ,Mp}$, we denote the representation $\rho \circ i_{\ell,M}$ by $\rho|_{G_{\QQ_{\ell}}}$ and denote the restriction of $\rho|_{G_{\QQ_{\ell}}}$ to $I_{\ell}$ by $\rho|_{I_{\ell}}$.
Denote the mod $p$ cyclotomic character of $G_{\QQ,Mp}$ by $\omega_p$ and the $p$-adic cyclotomic character by $\chi_p$.
By abuse of notation, for every prime $\ell$, we also denote $\omega_p|_{G_{\QQ_{\ell}}}$ and $\chi_p|_{G_{\QQ_{\ell}}}$ by $\omega_p$ and $\chi_p$, respectively.

If $\eta : G_{\QQ,Mp} \to \FF^{\times}$ is a character, denote its Teichm\"{u}ller lift by $\hat\eta$.
All the representations, pseudo-representations and cohomology groups that we consider in this article are assumed to be continuous unless mentioned otherwise.
Given a representation $\rho$ of $G_{\QQ,Mp}$ defined over $\FF$, denote by $\dim(H^i(G_{\QQ,Mp},\rho))$, the dimension of $H^i(G_{\QQ,Mp},\rho)$ as a vector space over $\FF$.
If $K$ is a number field, then we will denote its class group by $\text{Cl}(K)$.

If $R$ is a ring, then denote by $M_2(R)$ the $R$-module consisting of $2 \times 2$ matrices with entries in $R$. If $P$ is a prime ideal of $R$, then denote by $(R)_P$ the localization of $R$ at $P$. Denote by $(R)^{\red}$ the maximal reduced quotient of $R$ and by $R^{\times}$ the group of units of $R$. If $K$ is a field and $\rho : G_{\QQ,Mp} \to \GL_2(K)$ is a representation, then denote by $\text{Ad}(\rho)$ the representation on $M_2(K)$ in which the action of $g \in G_{\QQ,Mp}$ on $M_2(K)$ is given by conjugation by $\rho(g)$.

We will now briefly recall the setup which we introduced in the previous section and also introduce some additional hypotheses. We will work with this setup and keep the hypotheses of this setup throughout the article unless mentioned otherwise.

\begin{setup}\label{basic} 
 Let $\rhob_0 : G_{\QQ,Np} \to \GL_2(\FF)$ be an odd representation such that $\rhob_0 = \chibar_1 \oplus \chibar_2$ for some characters $\chibar_1, \chibar_2 : G_{\QQ,Np} \to \FF^{\times}$. As $\rhob_0$ is odd and $p$ is odd, $\chibar_1 \neq \chibar_2$. Let $\chibar=\chibar_1 \chibar_2^{-1}$ and denote by $F_{\chibar}$, the extension of $\QQ$ fixed by $\ker(\chibar)$.
Throughout this article, we assume that $\chibar|_{G_{\QQ_p}} \neq 1, \omega_p^{-1}$ and $\dim(H^1(G_{\QQ,Np},\chibar))=1$ unless mentioned otherwise. 
\end{setup}

We finish this subsection by introducing one more piece of notation.
We say that a modular form $f$ has \emph{level} $M$ if $f$ is a modular form of level $\Gamma_1(M)$.
For an eigenform $f$ of level $M$, denote by $\mathcal{O}_f$ be the ring of integers of the extension of $\QQ_p$ obtained by adjoining the Hecke eigenvalues of $f$ to $\QQ_p$, denote by $\varpi_f$ the uniformizer of $\mathcal{O}_f$ and denote by $\rho_f : G_{\QQ,Mp} \to \GL_2(\mathcal{O}_f)$ the semi-simple $p$-adic Galois representations attached to $f$ such that for every prime $\ell \nmid Mp$, $\tr(\rho_f(\frob_{\ell}))$ is the $T_{\ell}$-eigenvalue of $f$.
 We say that $\rho_f$ lifts $\rhob_0$ if $(\tr(\rho_f),\det(\rho_f)) \equiv (\tr(\rhob_0),\det(\rhob_0)) \Mod{\varpi_f}$.

\subsection{Deformation rings}
\label{defsec}
In the Setup~\ref{basic} established in the previous subsection, we will now define various deformation rings.
We will be working with the pseudo-representations introduced by Chenevier in \cite{C} which generalizes the theory of pseudocharacters developed by Procesi, Wiles, Taylor, Nyssen and Rouquier (see the introduction of \cite{C} for more details). 
We begin by recalling the definition of $2$-dimensional pseudo-representations.
\begin{defi}
\label{pseudodef}
Given an integer $M$ and a complete Noetherian local ring $R$, a continuous pseudo-representation of $G_{\QQ,Mp}$ on $R$ of dimension $2$ is a tuple of continuous functions $(t,d) : G_{\QQ,Mp} \to R$ such that
\begin{enumerate}
\item $d : G_{\QQ,Mp} \to R^{\times}$ is a group homomorphism,
\item $t(1)=2$,
\item $t(gh) = t (hg)$ for all $g, h \in G_{\QQ,Mp}$,
\item $d(g)t(g^{-1}h)+t(gh) = t(g)t(h)$ for all $g, h \in G_{\QQ,Mp}$.
\end{enumerate}
\end{defi}
We refer the reader to \cite[Section $1.4$]{BK} for the definition and properties of $2$-dimensional pseudo-representations.
Since $\rhob_0$ is a $2$-dimensional representation, $(\tr(\rhob_0), \det(\rhob_0)) : G_{\QQ,Np} \to \FF$ is a continuous $2$-dimensional pseudo-representation of $G_{\QQ,Np}$.
Throughout this article, we will only be working with continuous $2$-dimensional pseudo-representations.
So for the ease of notation, we will just refer to them as \emph{pseudo-representations} from now on.
Recall, from \S\ref{aimsec}, that Chenevier actually introduced the notion of determinants in \cite{C} to generalize the notion of pseudocharacters. In the case of dimension $2$, he established a bijection between the set of $R$-valued determinants of $G_{\QQ,Mp}$ and the set of $R$-valued pseudo-representations of $G_{\QQ,Mp}$ recalled in Definition~\ref{pseudodef} above (see \cite[Lemma 1.9]{C}).

If $R$ is an object of $\mathcal{C}$, denote its maximal ideal by $m_R$.
\begin{defi}
\label{defodef}
 Let $R^{\ps}_{\rhob_0}$ be the universal deformation ring of the pseudo-representation $(\tr(\rhob_0),\det(\rhob_0))$. So $R^{\ps}_{\rhob_0}$ is an object of $\mathcal{C}$ which represents the functor from $\mathcal{C}$ to the category of sets which sends an object $R$ of $\mathcal{C}$ to the set of $2$-dimensional pseudo-representations $(t,d) : G_{\QQ,Np} \to R$ such that $t \Mod{m_R} = \tr(\rhob_0)$ and $d \Mod{m_R} = \det(\rhob_0)$.
\end{defi}

The existence of $R^{\ps}_{\rhob_0}$ is proved by Chenevier (see \cite[Proposition 3.3]{C} and \cite[Proposition 3.7]{C}). 
Given a pseudo-representation $(t,d) : G_{\QQ,Np} \to R$, denote by $(t|_{G_{\QQ_p}}, d|_{G_{\QQ_p}})$ the pseudo-representation $(t \circ i_{p,N}, d \circ i_{p,N}) : G_{\QQ_p} \to R$.

\begin{defi}
\label{defodef2}
Since we are assuming $\chibar|_{G_{\QQ_p}} \neq 1$, there exists a $g_0 \in G_{\QQ_p}$ such that $\chibar_1(g_0) \neq \chibar_2(g_0)$. 
Fix such a $g_0 \in G_{\QQ_p}$.
Let $x \in H^1(G_{\QQ,Np},\chibar)$ be a non-zero element.
Define $\rhob_x : G_{\QQ,Np} \to \GL_2(\FF)$ to be the representation such that
\begin{enumerate} 
 \item $\rhob_x(g) = \begin{pmatrix} \chibar_1(g) & *\\ 0 & \chibar_2(g) \end{pmatrix}$ for all $g \in G_{\QQ,Np}$, where $*$ corresponds to $x$,
\item $\rhob_x(g_0) = \begin{pmatrix} \chibar_1(g_0) & 0\\ 0 & \chibar_2(g_0) \end{pmatrix}$.
\end{enumerate}
 Let $R_{\rhob_x}$ be the universal deformation ring (in the sense of Mazur (\cite{M})) of $\rhob_x$ in $\mathcal{C}$ and $\rho^{\univ} : G_{\QQ,Np} \to \GL_2(R_{\rhob_x})$ be a universal deformation of $\rhob_x$. 
\end{defi}

Since we are assuming that $\chibar_1 \neq \chibar_2$ and $x \neq 0$, the existence of $R_{\rhob_x}$ follows from work of Ramakrishna (\cite[Theorm 1.1]{Ra}).

\begin{defi}
\label{defodef3}
For $x$ and $\rhob_x$ as given in Definition~\ref{defodef2}, let $\rhob'_x : G_{\QQ,Np} \to \GL_2(\FF)$ be the representation given by $\rhob'_x := \rhob_x \otimes \chibar_2^{-1}$. Let $R^{\text{Sel}}_{\rhob'_x}$ be the object of $\mathcal{C}$ which represents the functor from $\mathcal{C}$ to the category of sets which sends an object $R$ of $\mathcal{C}$ to the set of equivalence classes of representations $\rho : G_{\QQ,Np} \to \GL_2(R)$ such that
  $\rho \Mod{m_R} = \rhob'_x$, 
$\rho|_{G_{\QQ_p}} \simeq \begin{pmatrix} \psi_1 & * \\ 0 & \psi_2\end{pmatrix}$, 
 $\psi_2(I_p) =1$ and $\det\rho|_{I_p} = \hat\chibar\hat\omega_p^{-1}\chi_p|_{I_p}$.
A deformation of $\rho$ of $\rhob'_x$ satisfying the conditions given above is called a \emph{Selmer} deformation of $\rhob'_x$ (see \cite{SW1}).
\end{defi}

The deformation ring $R^{\text{Sel}}_{\rhob'_x}$ is defined in \cite[Section $2$]{SW1} and its existence is proved by combining \cite[Proposition 3]{M} and \cite[Theorem 1.1]{Ra}.

\begin{defi}
\label{defodef4} 
Let $R^{\ps,\text{ord}}_{\chibar}$ be the object of $\mathcal{C}$ which represents the functor from $\mathcal{C}$ to the category of sets sending $R$ to the set of pseudo-representations $(t,d) : G_{\QQ,Np} \to R$ such that $t \Mod{m_R} = 1+\chibar$, $d=\hat\chibar$ and $t|_{G_{\QQ_p}} = \psi_1 + \psi_2$, where $\psi_1$ and $\psi_2$ are characters of $G_{\QQ_p} \to R^{\times}$ lifting $1$ and $\chibar|_{G_{\QQ_p}}$, respectively. 
\end{defi}

The deformation ring $R^{\ps,\text{ord}}_{\chibar}$  ia defined by Pan in \cite[Section 5.1.1]{P}.
Since $\chibar|_{G_{\QQ_p}} \neq 1$, it is easy to verify the existence of $R^{\ps,\text{ord}}_{\chibar}$.
\begin{rem}
Note that, the notions of $p$-ordinary pseudo-representations have also been formulated by Wake--Wang-Erickson (\cite[Definition 3.5.1]{WWE} and \cite[Definition 3.8.1]{WWE1}) and Calegari--Specter (\cite[Definition 2.5]{CS}).
However, both sets of authors formulate the $p$-ordinary condition as a \emph{global} condition.
In contrast, Pan formulates the $p$-ordinary condition as a purely \emph{local} condition (as seen in Definition~\ref{defodef4}).
The $p$-ordinary condition of Pan is included in the formulations of both Calegari--Specter and Wake--Wang-Erickson.
Thus their $p$-ordinary conditions are stronger than those of Pan.
As a result, the $p$-ordinary pseudo-deformation rings of Calegari--Specter and Wake--Wang-Erickson for the pseudo-representation $(1+\chibar,\chibar)$ arise as quotients of $R^{\ps,\text{ord}}_{\chibar}$.
\end{rem}

Since $(\tr(\rho^{\univ}), \det(\rho^{\univ}))$ is a deformation of $(\tr(\rhob_0),\det(\rhob_0))$, it induces a map $F_x : R^{\ps}_{\rhob_0} \to R_{\rhob_x}$. 
We first recall a result of Kisin regarding this map $F_x$.
\begin{lem}
\label{kisinjams}
If $\dim(H^1(G_{\QQ,Np},\chibar)) =1$, then the morphism $F_x$ is surjective.
\end{lem}
\begin{proof}
This is part $(2)$ of \cite[Corollary $1.4.4$]{Ki3}.
\end{proof}
In fact, one can say a bit more. We recall here a result from \cite{D2}:

\begin{prop}
\label{nilprop}
If $\dim(H^1(G_{\QQ,Np},\chibar)) =1$, then the kernel of $F_x$ is nilpotent.
\end{prop}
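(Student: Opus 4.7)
The plan is to show that $\ker(F_x)$ lies in every minimal prime of $R^{\ps}_{\rhob_0}$, which, since $R^{\ps}_{\rhob_0}$ is Noetherian, is equivalent to $\ker(F_x)$ being nilpotent. By the universal property of $R_{\rhob_x}$, this reduces to exhibiting, for each minimal prime $\mathfrak{p}$, a deformation $\rho_\mathfrak{p} : G_{\QQ,Np} \to \GL_2(R^{\ps}_{\rhob_0}/\mathfrak{p})$ of $\rhob_x$ whose trace and determinant agree with the universal pseudo-representation reduced modulo $\mathfrak{p}$.

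The main tool would be the Generalized Matrix Algebra (GMA) structure attached to the universal pseudo-representation by Bellaiche--Chenevier (\cite{BC}). Concretely, there is a GMA $\mathcal{A} = \begin{pmatrix} R^{\ps}_{\rhob_0} & B \\ C & R^{\ps}_{\rhob_0}\end{pmatrix}$ together with a homomorphism $\rho : G_{\QQ,Np} \to \mathcal{A}^\times$ whose induced trace and determinant recover the universal pseudo-representation, along with tangent-space identifications $B/m_{R^{\ps}_{\rhob_0}} B \cong H^1(G_{\QQ,Np},\chibar)$ and $C/m_{R^{\ps}_{\rhob_0}} C \cong H^1(G_{\QQ,Np},\chibar^{-1})$. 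The hypothesis $\dim H^1(G_{\QQ,Np},\chibar) = 1$ together with Nakayama's lemma makes $B$ cyclic over $R^{\ps}_{\rhob_0}$, say $B = R^{\ps}_{\rhob_0}\cdot b$ for a lift $b$ of the class $x$.

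Fix a minimal prime $\mathfrak{p}$ and set $R_\mathfrak{p} := R^{\ps}_{\rhob_0}/\mathfrak{p}$. I would split into two cases according to whether the image $\bar b$ of $b$ in $B \otimes_{R^{\ps}_{\rhob_0}} R_\mathfrak{p}$ vanishes. If $\bar b \neq 0$, a single change of basis turns $\rho \otimes R_\mathfrak{p}$ into an honest matrix representation $\rho_\mathfrak{p} \in \GL_2(R_\mathfrak{p})$ whose residual representation is, after rescaling, the extension $\rhob_x$, giving the desired deformation. If $\bar b \in \mathfrak{p}\cdot B$, then $\rho \otimes R_\mathfrak{p}$ becomes upper-triangular as a GMA-valued representation, and the pseudo-representation splits as $a+d$ for characters $a,d : G_{\QQ,Np} \to R_\mathfrak{p}^\times$ lifting $\chibar_1,\chibar_2$; one then attempts to build $\rho_\mathfrak{p}(g) := \begin{pmatrix} a(g) & x'(g) \\ 0 & d(g)\end{pmatrix}$ using a cocycle $x' \in Z^1(G_{\QQ,Np}, ad^{-1})$ lifting the class $x$.

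The main obstacle is producing this cocycle lift in the reducible case. Its existence amounts to the surjectivity of $H^1(G_{\QQ,Np}, ad^{-1}) \to H^1(G_{\QQ,Np}, \chibar)$, whose obstruction lives in an $H^2$ group that does not obviously vanish. The key idea would be to exploit the reducibility-ideal formalism of \cite{BC}, where the image of the product map $B \otimes_{R^{\ps}_{\rhob_0}} C \to R^{\ps}_{\rhob_0}$ carves out the reducibility locus, and to combine it with the cyclicity $B = R^{\ps}_{\rhob_0}\cdot b$ forced by $\dim H^1(G_{\QQ,Np},\chibar) = 1$; this should sharply restrict which minimal primes $\mathfrak{p}$ can contain $b$ and, at those, provide enough control to construct $x'$ directly. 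This is the delicate step, and it is where the argument of \cite{D2} whose conclusion is being recalled presumably does the detailed work.
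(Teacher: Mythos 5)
Your overall reduction is sound: since $R^{\ps}_{\rhob_0}$ is Noetherian, nilpotence of $\ker(F_x)$ is equivalent to $\ker(F_x)\subseteq\mathfrak{p}$ for every minimal prime $\mathfrak{p}$, and (given the universal properties) this is in turn equivalent to producing, over each $R^{\ps}_{\rhob_0}/\mathfrak{p}$, a deformation of $\rhob_x$ whose trace and determinant are the universal pseudo-deformation reduced mod $\mathfrak{p}$. Note, however, that the paper does not prove this statement in-house at all: its proof is a citation of \cite[Theorem 3.12]{D2}. So what you are attempting is a reconstruction of that external argument, and the reconstruction stops exactly at its core difficulty. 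In your Case 2 (where $b$ dies mod $\mathfrak{p}$ and the pseudo-character becomes $a+d$) you do not construct the required deformation: you need a cocycle in $Z^1(G_{\QQ,Np}, a d^{-1})$ over $R^{\ps}_{\rhob_0}/\mathfrak{p}$ whose reduction is a nonzero class in $H^1(G_{\QQ,Np},\chibar)$, you correctly observe that the obstruction lives in an $H^2$ that is not known to vanish, and the proposed remedy (``the reducibility-ideal formalism together with cyclicity of $B$ should restrict which minimal primes contain $b$ and provide enough control'') is stated as a hope, with no argument; your closing ``presumably \cite{D2} does the detailed work'' concedes the gap. This is precisely the case the proposition is about, so as written the proof is incomplete.

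There is also an unaddressed issue in your Case 1. Cyclicity of $B$ gives $B\otimes R^{\ps}_{\rhob_0}/\mathfrak{p}\cong (R^{\ps}_{\rhob_0}/\mathfrak{p})/J$ for some ideal $J$, and ``$\bar b\neq 0$'' only means $J$ is proper, not $J=0$. Writing the $B$-entries of the GMA representation as multiples of $\bar b$ is then well defined only modulo $J$, and the resulting matrix representation (multiplicativity included, via the GMA associativity constraints) lives over $(R^{\ps}_{\rhob_0}/\mathfrak{p})/J$, not over $R^{\ps}_{\rhob_0}/\mathfrak{p}$. That only yields $\ker(F_x)$ mapping into $J$, i.e.\ $\ker(F_x)\subseteq \mathfrak{p}+\widetilde{J}$ for the preimage $\widetilde{J}$ of $J$, which is weaker than the containment $\ker(F_x)\subseteq\mathfrak{p}$ you need unless you additionally argue that $B\otimes R^{\ps}_{\rhob_0}/\mathfrak{p}$ is free of rank one (or otherwise dispose of $J$). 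So both branches of the case division need genuine further input; concretely, you should either supply the missing analysis (control of the annihilator of $B$ and of the reducible minimal primes) or do as the paper does and invoke \cite[Theorem 3.12]{D2} directly.
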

\begin{proof}
This is just \cite[Theorem $3.12$]{D2}.
\end{proof}

Thus we see that the map $F_x$ induces an isomorphism between $(R^{\ps}_{\rhob_0})^{\red}$ and $(R_{\rhob_x})^{\red}$. So, we will mostly focus on the ring $R_{\rhob_x}$ from now on.

\subsection{Background results}
In this subsection, we will collect several results from the literature which will be frequently used in this article.

We begin by defining the tame Artin conductor of a Galois representation. 
If $\ell$ is a prime and $\rho$ is a representation of $G_{\QQ_{\ell}}$, then denote by $(\rho)^{I_{\ell}}$ the subspace of $\rho$ on which the inertia group $I_{\ell}$ acts trivially (i.e. the subspace of $I_{\ell}$-invariants).
\begin{defi}
\begin{enumerate}
\item Let $K$ be a finite extension of $\QQ_p$ and $\rho : G_{\QQ,Np} \to \GL_2(K)$ be a representation.
Let $\mathcal{S}_{\rho} = \{\ell \mid \ell \text{ is a prime }, \ell \neq p \text{ and } \ell \mid N \}$.
For a prime $\ell$, let $$e_{\ell}(\rho) = \dim(\rho) - \dim((\rho|_{G_{\QQ_{\ell}}})^{I_{\ell}}) + \text{sw}(\rho|_{G_{\QQ_{\ell}}}),$$ where $\text{sw}(\rho|_{G_{\QQ_{\ell}}})$ is the \emph{Swan conductor} of $\rho|_{G_{\QQ_{\ell}}}$.
Then the tame Artin conductor of $\rho$ is $\prod_{\ell \in \mathcal{S}_{\rho}} \ell^{e_{\ell}(\rho)}$.

\item Let $\rhob : G_{\QQ,Np} \to \GL_2(\FF)$ be a representation.
Let $\mathcal{S}_{\rhob} = \{\ell \mid \ell \text{ is a prime }, \ell \neq p \text{ and } \ell \mid N \}$.
For a prime $\ell$, let $$e_{\ell}(\rhob) = \dim(\rhob) - \dim((\rhob|_{G_{\QQ_{\ell}}})^{I_{\ell}}) + \text{sw}(\rhob|_{G_{\QQ_{\ell}}}),$$ where $\text{sw}(\rhob|_{G_{\QQ_{\ell}}})$ is the \emph{Swan conductor} of $\rhob|_{G_{\QQ_{\ell}}}$.
Then the tame Artin conductor of $\rhob$ is $\prod_{\ell \in \mathcal{S}_{\rhob}} \ell^{e_{\ell}(\rhob)}$.
\end{enumerate}
\end{defi}
Thus, while defining the tame Artin conductor, we are excluding the contribution coming from $p$.
We refer the reader to \cite[Section 1]{L} for the definition of the Swan conductor and other details.

We now recall a well known result which relates the tame Artin conductor of the $p$-adic Galois representation $\rho_f$ attached to a newform $f$ to the tame level of $f$.
\begin{prop}
\label{condprop}
Let $f$ be a newform and $\rho_f$ be the semi-simple $p$-adic Galois representation attached to $f$. Then the tame Artin conductor of $\rho_f$ is equal to the tame level of $f$.
\end{prop}
\begin{proof}
This is \cite[Lemma 4.1]{L}.
\end{proof}

Given a finite set of primes $S$ which includes the primes dividing $N_0$, we now determine the greatest possible tame level of a newform $f$ such that $\rho_f$ lifts $\rhob_0$ and the prime divisors of the tame level of $f$ are contained in $S$.
For a prime $\ell$, denote the standard $\ell$-adic valuation by $v_{\ell}$.
We first recall a result of Carayol.
\begin{prop}
\label{carayolprop}
Let $f$ be a newform of tame level $N$ such that $\rho_f$ lifts $\rhob_0$ and $\ell$ be a prime dividing $N$. Then:
\begin{enumerate}
\item If $p \nmid \ell^2-1$ and $\ell \nmid N_0$, then $v_{\ell}(N)=1$ and $\chibar|_{G_{\QQ_{\ell}}}$ is either $\omega_p|_{G_{\QQ_{\ell}}}$ or $\omega_p^{-1}|_{G_{\QQ_{\ell}}}$.
\item If $p \nmid \ell-1$ and $\ell \mid N_0$, then $v_{\ell}(N) = v_{\ell}(N_0)$.
\item If $p \mid \ell-1$ and $\ell \nmid N_0$, then $v_{\ell}(N) \leq 2$.
\item  If $p \mid \ell-1$ and $\ell \mid N_0$, then $v_{\ell}(N/N_0) \leq 1$.
\item  If $p \mid \ell-1$, $\ell \mid N_0$ and both $\chibar_1$ and $\chibar_2$ are ramified at $\ell$, then $v_{\ell}(N) = v_{\ell}(N_0)$.
\item If $p \mid \ell+1$ and $\ell \nmid N_0$, then $v_{\ell}(N) \leq 2$ and $\chibar|_{G_{\QQ_{\ell}}} = \omega_p|_{G_{\QQ_{\ell}}}$.
\end{enumerate}
\end{prop}
\begin{proof}
By Proposition~\ref{condprop}, the tame Artin conductor of $\rho_f$ is $N$.
As $\rho_f$ lifts $\rhob_0$ and $N_0$ is the tame Artin conductor of $\rhob_0$, $N_0 \mid N$.
The proposition now follows directly from  \cite[Proposition $2$]{Ca} and \cite[Remark 1.5]{Ca}.
\end{proof}

As a corollary, we get:
\begin{cor} 
\label{condlem}
Let $N$ be an integer such that $N \neq 1$, $N_0 \mid N$ and $\dim(H^1(G_{\QQ,Np},\chibar)) = 1$. 
For every prime $\ell \mid N$ such that $p \mid \ell-1$, suppose the following holds:
\begin{enumerate}
\item\label{hyp1} If $\ell \nmid N_0$, then $\ell^2 \mid N$. 
\item\label{hyp2} If $\ell \mid N_0$ and either $\chibar_1$ or $\chibar_2$ is unramified at $\ell$, then $\ell \mid \frac{N}{N_0}$.
\end{enumerate}
Suppose there exists a newform $f$ such that $\rho_f$ lifts $\rhob_0$ and let the tame level of $f$ be $N'$.
If every prime divisor of $N'$ is also a prime divisor of $N$, then $N' \mid N$.
\end{cor}
\begin{proof}
Since $\dim(H^1(G_{\QQ,Np},\chibar)) = 1$, it follows, from Lemma~\ref{dimonelem}, that if $\ell \mid N$, then $\chibar|_{G_{\QQ_\ell}} \neq \omega_p|_{G_{\QQ_\ell}}$.
The corollary now follows directly from Proposition~\ref{carayolprop}.
%
%
\end{proof}

We now recall some results about deformation rings.
We begin with a presentation result for deformation rings which was proved by Mazur and B\"{o}ckle.

\begin{prop}
\label{bocprop}
Let $\chibar_1, \chibar_2 : G_{\QQ,Np} \to \FF^{\times}$  be distinct characters such that $\chibar_1 \oplus \chibar_2$ is an odd representation and let $\chibar =\chibar_1\chibar_2^{-1}$.
Let $x \in H^1(G_{\QQ,Np},\chibar)$ be a non-zero element and $\rhob_x$ be a representation given by $\begin{pmatrix} \chibar_1 & * \\ 0 & \chibar_2 \end{pmatrix}$, where $*$ corresponds to $x$.
Let $R_{\rhob_x}$ be the universal deformation ring of $\rhob_x$ in $\mathcal{C}$.
Let $d = \dim(H^1(G_{\QQ,Np},\text{Ad}(\rhob_x)))$ and $d' = \dim(H^2(G_{\QQ,Np},\text{Ad}(\rhob_x)))$.
Then there exists a surjective map $$F : W(\FF)\llbracket X_1,\cdots,X_d\rrbracket \to R_{\rhob_x}$$ such that $\ker(F)$ is generated by at most $d'$ elements.
\end{prop}
\begin{proof}
This is \cite[Theorem 2.4]{Bo2}.
\end{proof}

A presentation result for deformation rings, similar to Proposition~\ref{bocprop}, was first proved by Mazur (see \cite[Proposition 2]{M} and its proof).
However, Mazur proved it only for mod $p$ universal deformation rings of absolutely irreducible representations.
So Proposition~\ref{bocprop} is a generalization of Mazur's result and it was proved by B\"{o}ckle in \cite{Bo2} using ideas from the proof of Mazur's result.

We now recall some results about universal deformation rings of characteristic $0$ Galois representations.
Let $K$ be a finite extension of $\QQ_p$ and $\rho : G_{\QQ,Np} \to \GL_2(K)$ be a representation.
If the only $G_{\QQ,Np}$-linear endomorphisms of $\rho$ are scalars, then the existence of the universal deformation ring of $\rho$ (in the sense of Mazur) has been proved by Kisin (\cite[Lemma 9.3]{Ki2}).
To be precise, let $\mathcal{D}$ be the category of local Artinian algebras with residue field $K$.
Let $\mathcal{H}$ be the functor from $\mathcal{D}$ to the category of sets which sends an object $R$ of $\mathcal{D}$ to the set of deformations (i.e. equivalence classes of lifts) of $\rho$ to $\GL_2(R)$.
Then Kisin has proved the following:

\begin{lem}
\label{deformlem}
If all $G_{\QQ,Np}$-linear endomorphisms of $\rho$ are scalars, then the functor $\mathcal{H}$ is pro-representable by a complete local Noetherian ring $R_{\rho}$ with residue field $K$.
\end{lem}
\begin{proof}
This is \cite[Lemma 9.3]{Ki2}.
\end{proof}

The ring $R_{\rho}$ obtained in Lemma~\ref{deformlem} above is known as the universal deformation ring of $\rho$.
We also have an analogue of Proposition~\ref{bocprop} for $R_{\rho}$.
\begin{lem}
\label{preslem}
Suppose $\rho : G_{\QQ,Np} \to \GL_2(K)$ is a representation such that all its $G_{\QQ,Np}$-linear endomorphisms are scalars.
Let $d = \dim(H^1(G_{\QQ,Np},\text{Ad}(\rho)))$ and $d' = \dim(H^2(G_{\QQ,Np},\text{Ad}(\rho)))$.
Then there exists a surjective map $$F : K\llbracket X_1,\cdots,X_d\rrbracket \to R_{\rho}$$ such that $\ker(F)$ is generated by at most $d'$ elements.
\end{lem}
\begin{proof}
This follows directly from the proof of \cite[Corollary 9.8]{Ki2} which uses ideas from the proof of Mazur's result (\cite[Proposition 2]{M}).
\end{proof}

Let $\rhob_x$ be the representation defined in Definition~\ref{defodef2} and let $R_{\rhob_x}$ be the universal deformation ring of $\rhob_x$ in $\mathcal{C}$.

Let $P$ be a prime of $R_{\rhob_x}$ such that $R_{\rhob_x}/P$ is a finite extension of $W(\FF)$. 
As $R_{\rhob_x}/P$ is also an integral domain, there exists a finite extension $K_P$ of $\QQ_p$ such that $R_{\rhob_x}/P$ is isomorphic to a subring of $K_P$.
We fix such an isomorphism and let $\rho_P : G_{\QQ,Np} \to \GL_2(K_P)$ be the representation obtained by composing $\rho^{\univ} \pmod{P}$ with this isomorphism.
Note that $\rho_P$ is odd as $\rhob_x$ is odd.
Suppose $\rho_P$ is either absolutely irreducible or a non-trivial extension of two distinct characters.
Then, by Lemma~\ref{deformlem}, the universal deformation ring $R_{\rho_P}$ of $\rho_P$ exists.

Let $(R_{\rhob_x})_P$ be the localization of $R_{\rhob_x}$ at $P$ and $\widehat{(R_{\rhob_x})_P}$ be the completion of $(R_{\rhob_x})_P$ with respect to its maximal ideal.
We now recall a result of Kisin relating $R_{\rho_P}$ and $R_{\rhob_x}$.

\begin{prop}
\label{kisinprop}
Under the notation established above, the universal deformation ring $R_{\rho_P}$ of $\rho_P$ (when it exists) is isomorphic to $\widehat{(R_{\rhob_x})_P}$ and the Krull dimension of $R_{\rho_P}$ is at least $3$.
\end{prop}
\begin{proof}
The isomorphism between $R_{\rho_P}$ and $\widehat{(R_{\rhob_x})_P}$ is given in \cite[Proposition 9.5]{Ki2}. The lower bound on the Krull dimension of $R_{\rho_P}$ follows directly from oddness of $\rho_P$ and \cite[Corollary 9.8]{Ki2}.
\end{proof}

Now we will recall a result of Kisin (\cite{Ki2}) and Weston (\cite{We1}) about unobstructedness of $\rho_P$ when $\rho_P$ arises from a modular form.
We keep the notation established above. Let $\chibar=\chibar_1\chibar_2^{-1}$.

\begin{thm}
\label{unobsthm}
Suppose $\rho_P$ is the $p$-adic Galois representation attached to a newform $f$. Suppose one of the following hypothesis hold:
\begin{enumerate}
\item\label{ek} $\chibar \neq \omega_p^k$ for any integer $k$.
\item\label{don} $f$ is not a CM modular form, $p$ does not divide the level of $f$ and $\rho_P|_{G_{\QQ_{\ell}}}$ is special for all primes $\ell$ dividing the level of $f$.
\end{enumerate}
Then $R_{\rho_P} \simeq K \llbracket T_1, T_2, T_3 \rrbracket$.
\end{thm}
\begin{proof}
By Lemma~\ref{preslem}, we know that $R_{\rho_P} \simeq K \llbracket T_1, T_2, T_3 \rrbracket$ if $\dim(H^1(G_{\QQ,Np}, \text{Ad}(\rho_P))) =3$ and $H^2(G_{\QQ,Np},\text{Ad}(\rho_P))=0$.
As $\rho_P$ is odd, the global Euler characteristic formula (\cite[Lemma 9.7]{Ki2}) implies that $H^2(G_{\QQ,Np},\text{Ad}(\rho_P))=0$ if and only if $\dim(H^1(G_{\QQ,Np}, \text{Ad}(\rho_P))) =3$.
Hence, it suffices to prove that $\dim(H^1(G_{\QQ,Np}, \text{Ad}(\rho_P))) =3$.

In the case of Part~\eqref{ek}, we know this from part $(2)$ of \cite[Theorem 8.2]{Ki}.
Moreover, from the proof of \cite[Theorem 8.2]{Ki}, it follows that $\dim(H^1(G_{\QQ,Np}, \text{Ad}(\rho_P))) =3$ if a certain Selmer group (which is also a subgroup of $H^1(G_{\QQ,Np}, \text{Ad}(\rho_P))$) vanishes.
Now if $f$ satisfies the conditions of Part~\eqref{don}, then Weston (\cite{We1}) has proved that this Selmer group vanishes (see \cite[Theorem 1]{We1} and the proof of \cite[Theorem 11.10]{Ki2}).
This completes the proof of the theorem.
\end{proof}

We now record a well-known basic result about $p$-adic Galois representations arising from Eisenstein series.

\begin{lem}
\label{eisenlem}
Let $K$ be a finite extension of $\QQ_p$ and $\mathcal{O}$ be the ring of integers of $K$.
Let $\chi_1,\chi_2 : G_{\QQ,Np} \to \mathcal{O}^{\times}$ be characters of finite order.
Let $N$ be the product of the conductors of $\chi_1$ and $\chi_2$.
If $k \geq 1$ is an integer such that $\rho := \chi_1 \oplus \chi_2\chi_p^{k-1}$ is an odd representation of $G_{\QQ,Np}$, then there exists an Eisenstein series $E$ of level $N$ and weight $k$ such that $\rho = \rho_E$.
\end{lem}
\begin{proof}
The lemma follows immediately from a standard construction of Eisenstein series of level $N$ and weight $k$ using the characters $\chi_1$ and $\chi_2$ and the definition of the $p$-adic Galois representation attached to an Eisenstein series.
\end{proof}

We finish this section by recalling the generalization of Krull's principal ideal theorem.
\begin{thm}
\label{krullthm}
Suppose $R$ is a Noetherian ring of Krull dimension $\kappa$ and $I$ is an ideal of $R$ generated by $a_1,\cdots,a_d \in R$. Then the Krull dimension of $R/I$ is at least $\kappa - d$.
\end{thm}
\begin{proof}
 This is \cite[Theorem $10.2$]{E}.
\end{proof}

\subsection{Some miscellaneous results}
\label{miscres}
We begin with a result giving necessary and sufficient conditions for $\dim(H^1(G_{\QQ,Np},\chibar))$ to be $1$. 
Let $K_{\chibar}$ be the extension of $\QQ$ fixed by $\ker(\chibar^{-1}\omega_p)$.
So $K_{\chibar}$ is a subfield of $F_{\chibar}(\zeta_p)$, where $\zeta_p$ is a primitive $p$-th root of unity in $\overline{\QQ}$.
Recall that we denote the class group of $K_{\chibar}$ by $\text{Cl}(K_{\chibar})$. Note that $\text{Gal}(K_{\chibar}/\QQ)$ acts on the $\FF$-vector space $\text{Cl}(K_{\chibar})/\text{Cl}(K_{\chibar})^p \otimes_{\FF_p} \FF$. 
Given a character $\eta : \text{Gal}(K_{\chibar}/\QQ) \to \FF^{\times}$, denote by $(\text{Cl}(K_{\chibar})/\text{Cl}(K_{\chibar})^p)[\eta]$ the subspace of $\text{Cl}(K_{\chibar})/\text{Cl}(K_{\chibar})^p \otimes_{\FF_p} \FF$ on which $\text{Gal}(K_{\chibar}/\QQ)$ acts via the character $\eta$.

\begin{lem}
\label{dimonelem}
We have $\dim(H^1(G_{\QQ,Np},\chibar)) =1$ if and only if the following conditions hold:
\begin{enumerate}
\item $\chibar|_{G_{\QQ_{\ell}}} \neq \omega_p|_{G_{\QQ_{\ell}}} $ for all primes $\ell \mid Np$,
\item $(\text{Cl}(K_{\chibar})/\text{Cl}(K_{\chibar})^p)[\omega_p\chibar^{-1}] = 0.$
\end{enumerate}
Otherwise, $\dim(H^1(G_{\QQ,Np},\chibar)) \geq 2$
\end{lem}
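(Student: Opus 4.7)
The plan is to compute $\dim(H^1(G_{\QQ,Np},\chibar))$ via the global Euler characteristic formula and then apply Poitou--Tate duality, reducing the problem to the vanishing of local $H^2$'s together with a Shafarevich--Tate group that is in turn controlled by the class group of $F_{\chibar}$.

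First, I would invoke Tate's global Euler characteristic formula. Since $\rhob_0$ is odd and $p$ is odd, one has $\chibar(c) = -1$ at complex conjugation $c$, so $H^0(G_{\mathbb{R}},\chibar) = 0$, giving
$$\dim H^0(G_{\QQ,Np},\chibar) - \dim H^1(G_{\QQ,Np},\chibar) + \dim H^2(G_{\QQ,Np},\chibar) = -1.$$
The standing hypothesis $\chibar|_{G_{\QQ_p}} \neq 1$ forces $\chibar$ to be globally nontrivial, so $H^0 = 0$, and the lemma reduces to showing that $H^2(G_{\QQ,Np},\chibar) = 0$ if and only if conditions (1) and (2) both hold.

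Next, I would apply the tail of the Poitou--Tate nine-term exact sequence:
$$0 \to \Sh^2(G_{\QQ,Np},\chibar) \to H^2(G_{\QQ,Np},\chibar) \to \bigoplus_{\ell\mid Np} H^2(G_{\QQ_\ell},\chibar) \to H^0(G_{\QQ,Np},\chibar^{-1}\omega_p)^{\vee} \to 0,$$
the archimedean term being trivial since $p$ is odd. Local Tate duality identifies $H^2(G_{\QQ_\ell},\chibar) \cong H^0(G_{\QQ_\ell},\chibar^{-1}\omega_p)^{\vee}$, which is nonzero precisely when $\chibar|_{G_{\QQ_\ell}} = \omega_p|_{G_{\QQ_\ell}}$. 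Once condition (1) holds, the instance at $\ell = p$ forces $\chibar \neq \omega_p$ as global characters, so the right-most $H^0$ also vanishes, and the sequence collapses to an isomorphism $H^2(G_{\QQ,Np},\chibar) \cong \Sh^2(G_{\QQ,Np},\chibar)$. Conversely, if condition (1) fails at some $\ell_0 \mid Np$, the nonzero local $H^2$ at $\ell_0$ cannot be absorbed by the $H^0(\chibar^{-1}\omega_p)$ term under the standing hypotheses on $\chibar|_{G_{\QQ_p}}$, so $\dim H^2 \geq 1$ in that case.

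Finally, global Poitou--Tate duality yields $\Sh^2(G_{\QQ,Np},\chibar) \cong \Sh^1(G_{\QQ,Np},\chibar^{-1}\omega_p)^{\vee}$. To identify this Shafarevich--Tate group with the class group component in condition (2), I would invoke inflation--restriction along $G_{\QQ,Np} \twoheadrightarrow \text{Gal}(F_{\chibar}/\QQ)$: since $\text{Gal}(F_{\chibar}/\QQ)$ embeds in $\FF^{\times}$, its order is coprime to $p$, and inflation--restriction gives an isomorphism
$$H^1(G_{\QQ,Np},\chibar^{-1}\omega_p) \cong H^1(G_{F_{\chibar},Np},\omega_p|_{G_{F_{\chibar}}})^{\text{Gal}(F_{\chibar}/\QQ)},$$
where the $\text{Gal}(F_{\chibar}/\QQ)$-action on the right is the natural conjugation action twisted semilinearly by the coefficient character $\chibar^{-1}$. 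Restricting to classes trivial at every $v \in S$ corresponds, via global class field theory, to characters factoring through the $p$-part of the ideal class group of $F_{\chibar}$; tracking the semilinear twist through this identification pins down exactly the $\omega_p\chibar^{-1}$-isotypic subspace $(\text{Cl}(F_{\chibar})/\text{Cl}(F_{\chibar})^p)[\omega_p\chibar^{-1}]$, giving the equivalence with condition (2). The main obstacle will be the careful bookkeeping of the semilinear action in this last step: one must keep straight the distinction between a Galois module and its contragredient when passing through the class field theory isomorphism, so as to verify that the character that appears is $\omega_p\chibar^{-1}$ rather than its inverse (equivalently, that the duality in $\Sh^2 \cong (\Sh^1)^\vee$ does not introduce an extra inversion at the level of isotypic decompositions).
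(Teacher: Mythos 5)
Your route repackages the same duality inputs (global Euler characteristic, Poitou--Tate, local Tate duality) that the paper bundles into a single application of the Greenberg--Wiles formula; in outline this is a valid alternative decomposition, but two steps do not go through as written. First, the converse direction of the middle step: the claim that a nonzero local $H^2(G_{\QQ_{\ell_0}},\chibar)$ ``cannot be absorbed by the $H^0(\chibar^{-1}\omega_p)$ term under the standing hypotheses on $\chibar|_{G_{\QQ_p}}$'' is not justified. The standing hypotheses exclude $\chibar|_{G_{\QQ_p}} = 1$ and $\chibar|_{G_{\QQ_p}} = \omega_p^{-1}|_{G_{\QQ_p}}$, but do \emph{not} exclude $\chibar = \omega_p$ when $p \geq 5$; in that case $\dim H^0(G_{\QQ,Np},\chibar^{-1}\omega_p) = 1$, so the Poitou--Tate map $\bigoplus_\ell H^2(G_{\QQ_\ell},\chibar) \to H^0(G_{\QQ,Np},\chibar^{-1}\omega_p)^{\vee}$ has a one-dimensional target and can kill one local $H^2$. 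Indeed, with $\chibar=\omega_p$ and $N=1$ one has $\dim H^1(G_{\QQ,p},\omega_p)=1$ even though condition (1) fails, so this degenerate case has to be excluded or treated separately; the paper's displayed Greenberg--Wiles identity likewise silently omits the global $-\dim H^0(G_{\QQ,Np},\omega_p\chibar^{-1})$ term, so the same implicit assumption $\chibar\neq\omega_p$ is present there as well. The clean fix is to record the full dimension count from the four-term tail, $\dim H^2(G_{\QQ,Np},\chibar) = \dim \Sh^2 + \sum_\ell \dim H^2(G_{\QQ_\ell},\chibar) - \dim H^0(G_{\QQ,Np},\chibar^{-1}\omega_p)$, and then argue by cases rather than asserting non-absorption.

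Second, the inflation--restriction at the end descends along the wrong subgroup. After restricting to $G_{F_{\chibar}}$ the coefficient module is $\omega_p|_{G_{F_{\chibar}}}$, which is in general nontrivial, so $H^1(G_{F_{\chibar},Np},\omega_p)$ is a Kummer-theoretic object for $F_{\chibar}$ rather than $\mathrm{Hom}(\mathrm{Cl}(F_{\chibar}),\FF)$, and ``classes trivial at every $v \in S$'' does not translate directly into characters of the class group. To invoke Hilbert class field theory you must pass to a field over which $\chibar^{-1}\omega_p$ becomes trivial, e.g.\ $F_{\chibar}(\mu_p)$ or the splitting field of $\omega_p\chibar^{-1}$, where everywhere-unramified $1$-cocycles are genuine unramified homomorphisms into $\FF$, and then transport the $\omega_p\chibar^{-1}$-isotypic piece of the $p$-class group back to $\mathrm{Cl}(F_{\chibar})$ by a norm/transfer argument using $p \nmid |\mathrm{Gal}(F_{\chibar}/\QQ)|$. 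This is exactly the content of the paper's intermediate observation that, under condition (1), $\Sh^1(\omega_p\chibar^{-1})$ coincides with the subgroup of everywhere-unramified classes, which extend to $G_{\QQ}$-representations and hence cut out abelian unramified $p$-extensions of the splitting field of the coefficient character; without that observation (or an explicit descent to the larger field), the passage to the class group in your last step is not established.
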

\begin{proof}
As $\chibar$ is odd, the global Euler characteristic formula implies that $H^1(G_{\QQ,Np},\chibar) \neq 0$.
Therefore, if $\dim(H^1(G_{\QQ,Np},\chibar)) \neq 1$, then $\dim(H^1(G_{\QQ,Np},\chibar)) \geq 2$.
So we will now focus on the case when $\dim(H^1(G_{\QQ,Np},\chibar)) =1$.

By the Greenberg--Wiles formula (\cite[Theorem $2$]{Wa}) and the local Euler characteristic formula, we have 
\begin{multline}
\dim(H^1(G_{\QQ,Np},\chibar)) = \dim(H^1_0(G_{\QQ,Np}, \omega_p\chibar^{-1}))+ 1 + \dim(H^0(G_{\QQ_p}, \omega_p\chibar^{-1}|_{G_{\QQ_p}})) \\ +\sum_{\ell \mid N}\dim(H^0(G_{\QQ_\ell}, \omega_p\chibar^{-1}|_{G_{\QQ_\ell}})),
\end{multline}
where $$H^1_0(G_{\QQ,Np}, \omega_p\chibar^{-1}) = \ker(H^1(G_{\QQ,Np},\omega_p\chibar^{-1}) \to H^1(G_{\QQ_p},\omega_p\chibar^{-1}|_{G_{\QQ_p}}) \times \prod_{\ell \mid N} H^1(G_{\QQ_\ell},\omega_p\chibar^{-1}|_{G_{\QQ_\ell}})).$$
Hence, $\dim(H^1(G_{\QQ,Np},\chibar)) = 1$ if and only if the following conditions hold:
\begin{enumerate}
\item $H^0(G_{\QQ_\ell}, \omega_p\chibar^{-1}|_{G_{\QQ_\ell}})=0$ i.e. $\chibar|_{G_{\QQ_{\ell}}} \neq \omega_p|_{G_{\QQ_{\ell}}} $ for all primes $\ell \mid Np$,
\item $H^1_0(G_{\QQ,Np}, \omega_p\chibar^{-1})=0.$
\end{enumerate}
By the first condition above, we know that if $\ell \mid Np$, then $\omega_p\chibar^{-1}|_{G_{\QQ_\ell}} \neq 1$.
Therefore, for all primes $\ell \mid Np$, $\ker(H^1(G_{\QQ_{\ell}},\omega_p\chibar^{-1}|_{G_{\QQ_\ell}}) \to H^1(I_{\ell},\omega_p\chibar^{-1}|_{I_\ell})) =0.$
Indeed, if we denote the representation $\omega_p\chibar^{-1}|_{G_{\QQ_\ell}}$ of $G_{\QQ_{\ell}}$ by $V_{\ell}$ and the subspace of $V_{\ell}$ fixed by $I_{\ell}$ by $(V_{\ell})^{I_{\ell}}$, then the inflation-restriction sequence implies that $$ H^1(G_{\QQ_{\ell}}/I_{\ell}, (V_{\ell})^{I_{\ell}}) = \ker(H^1(G_{\QQ_{\ell}},\omega_p\chibar^{-1}|_{G_{\QQ_\ell}}) \to H^1(I_{\ell},\omega_p\chibar^{-1}|_{I_\ell})). $$
However, from \cite[Lemma 1]{Wa}, we know that $|H^1(G_{\QQ_{\ell}}/I_{\ell}, (V_{\ell})^{I_{\ell}})| = |H^0(G_{\QQ_{\ell}},V_{\ell})|$.
Since $\omega_p\chibar^{-1}|_{G_{\QQ_\ell}} \neq 1$, $H^0(G_{\QQ_{\ell}},V_{\ell}) = 0$ and hence, $H^1(G_{\QQ_{\ell}}/I_{\ell}, (V_{\ell})^{I_{\ell}}) =0$.
This proves our claim.

Hence, we have $$H^1_0(G_{\QQ,Np}, \omega_p\chibar^{-1}) = \ker(H^1(G_{\QQ,Np},\omega_p\chibar^{-1}) \to H^1(I_p,\omega_p\chibar^{-1}|_{I_p}) \times \prod_{\ell \mid N} H^1(I_{\ell},\omega_p\chibar^{-1}|_{I_{\ell}})).$$
So $H^1_0(G_{\QQ,Np},\omega_p\chibar^{-1})$ classifies isomorphism classes of extensions $$0 \to \omega_p\chibar^{-1} \to V \to 1 \to 0$$ of $G_{\QQ}$-representations which are unramified everywhere.

Hence, a non-zero element of $H^1_0(G_{\QQ,Np},\omega_p\chibar^{-1})$ gives a Galois extension $K$ of $\QQ$ such that $K_{\chibar} \subset K$, $K$ is an everywhere unramified extension of $K_{\chibar}$, $\text{Gal}(K/K_{\chibar})$ is isomorphic to an $\FF_p$-vector subspace of $\FF$ and $\text{Gal}(K_{\chibar}/\QQ)$ acts on $\text{Gal}(K/K_{\chibar})$ via the character $\omega_p\chibar^{-1}$.
On the other hand, a Galois extension $K$ of $\QQ$ satisfying all the properties mentioned above gives a non-zero element of $H^1_0(G_{\QQ,Np},\omega_p\chibar^{-1})$.

Therefore, by Hilbert class field theory, it follows that if the first condition above holds, then $H^1_0(G_{\QQ,Np}, \omega_p\chibar^{-1})  =0$ if and only if $(\text{Cl}(K_{\chibar})/\text{Cl}(K_{\chibar})^p)[\omega_p\chibar^{-1}]=0$. This proves the lemma.
\end{proof}

\textbf{Examples:} In particular, Lemma~\ref{dimonelem} implies that if $p \nmid |\text{Cl}(K_{\chibar})|$ and $\chibar|_{G_{\QQ_{\ell}}} \neq \omega_p|_{G_{\QQ_{\ell}}} $ for all primes $\ell \mid Np$, then $\dim(H^1(G_{\QQ,Np},\chibar)) =1$. 

Now suppose $\chibar=\omega_p^{k-1}$ with $k$ even and $k \not\equiv 0,2 \pmod{p-1}$.
In this case, we know, by reflection principle (\cite[Theorem 10.9]{Wash}), that $(\text{Cl}(\QQ(\zeta_p))/\text{Cl}(\QQ(\zeta_p))^p)[\omega_p^{2-k}]=0$ if $(\text{Cl}(\QQ(\zeta_p))/\text{Cl}(\QQ(\zeta_p))^p)[\omega_p^{k-1}]=0$.
By Herbrand--Ribet theorem, we know that $(\text{Cl}(\QQ(\zeta_p))/\text{Cl}(\QQ(\zeta_p))^p)[\omega_p^{k-1}]=0$ if and only if $p \nmid B_{p+1-k}$.

On the other hand, Kurihara has proved that $(\text{Cl}(\QQ(\zeta_p))/\text{Cl}(\QQ(\zeta_p))^p)[\omega_p^{p-3}]=0$ (see \cite[Corollary 3.8]{K}).
 It follows, from \cite[Corollary 7.1]{Kup}, that $(\text{Cl}(\QQ(\zeta_p))/\text{Cl}(\QQ(\zeta_p))^p)[\omega_p^{p-5}]=0$.
Finally, if $p >3$ and $p \equiv 3 \pmod{4}$, then Osburn has proved that $(\text{Cl}(\QQ(\zeta_p))/\text{Cl}(\QQ(\zeta_p))^p)[\omega_p^{\frac{p+1}{2}}]=0$ (see \cite[Theorem 1.1]{O}). 
Using this we get:
\begin{enumerate}
\item\label{eg1} If $2 <k < p-1$ is an even integer and $\ell_1,\cdots,\ell_r$ are distinct primes such that $p \nmid \prod_{i=1}^{r}\ell_i$ and $p \nmid \ell_i^{k-2}-1$ for all $1 \leq i \leq r$, then $$\dim(H^1(G_{\QQ,p\prod_{i=1}^{r}\ell_i}, \omega_p^{k-1})) =1$$ if one of the following conditions hold:
\begin{itemize}
\item $p>3$, $p \equiv 3 \pmod{4}$ and $k=\dfrac{p+1}{2}$.
\item $p>5$ and $k=4,6$.
\item $p \nmid B_{p+1-k}$.
\item $p$ is a regular prime.
\item Vandiver's conjecture holds for $p$.
\end{itemize}
\item More generally, if $N$ is the tame Artin conductor of $\chibar$ (which means $\chibar$ is ramified at all primes $\ell \mid N$), $p \nmid |\text{Cl}(K_{\chibar})|$ and $\ell_1,\cdots,\ell_r$ are distinct primes such that $\chibar|_{G_{\QQ_{\ell_i}}} \neq \omega_p|_{G_{\QQ_{\ell_i}}}$ for all $1 \leq i \leq r$ then
$$\dim(H^1(G_{\QQ,Np},\chibar)) = \dim(H^1(G_{\QQ,N(\prod_{i=1}^{r}\ell_i)p},\chibar)) =1.$$
Given a prime $p$, we conclude, using Chebotarev's density theorem, that $\{\ell \mid \ell \text{ is a prime, } \chibar|_{G_{\QQ_{\ell}}} \neq \omega_p|_{G_{\QQ_{\ell}}}\}$ is an infinite set with positive Dirichlet density.
\end{enumerate}

On the other hand, we have:
\begin{lem}
\label{cyclem}
Let $k>2$ be an even integer. If $\dim(H^1(G_{\QQ,Np},\omega_p^{k-1})) = 1$ and $\ell \mid N$, then $p \nmid \ell^2-1$. 
\end{lem}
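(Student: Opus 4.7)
The plan is to invoke the necessary direction of Lemma~\ref{dimonelem} with $\chibar = \omega_p^{k-1}$ and then parse its first condition at the prime $\ell$. Specifically, the hypothesis $\dim(H^1(G_{\QQ,Np},\omega_p^{k-1})) = 1$ forces $\omega_p^{k-1}|_{G_{\QQ_\ell}} \neq \omega_p|_{G_{\QQ_\ell}}$, equivalently $\omega_p^{k-2}|_{G_{\QQ_\ell}} \neq 1$. Since $\ell \neq p$, the character $\omega_p|_{G_{\QQ_\ell}}$ is unramified and sends $\frob_\ell$ to $\ell \Mod p$, so this inequality is the congruence condition $\ell^{k-2} \not\equiv 1 \Mod p$.

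Now I would derive the contrapositive: assume for contradiction that $p \mid \ell^2 - 1$, i.e.\ $\ell \equiv \pm 1 \Mod p$. In either case, using that $k-2$ is even (because $k$ is even), we get $\ell^{k-2} \equiv (\pm 1)^{k-2} \equiv 1 \Mod p$, contradicting the condition extracted from Lemma~\ref{dimonelem}. Hence $p \nmid \ell^2-1$, as required.

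There is no real obstacle here; the proof is essentially a one-line unpacking once Lemma~\ref{dimonelem} is in hand. The only thing to be careful about is using the evenness of $k$ to handle the $\ell \equiv -1 \Mod p$ case — without this parity, only $\ell \equiv 1 \Mod p$ would be excluded and the conclusion $p \nmid \ell^2 - 1$ would fail.
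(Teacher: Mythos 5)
Your argument is correct and is essentially identical to the paper's proof: both invoke the forward direction of Lemma~\ref{dimonelem} to get $\omega_p^{k-2}|_{G_{\QQ_\ell}} \neq 1$, then observe that $p \mid \ell^2-1$ together with the evenness of $k-2$ would force $\ell^{k-2} \equiv 1 \Mod{p}$, i.e.\ $\omega_p^{k-2}|_{G_{\QQ_\ell}} = 1$, a contradiction. Nothing further is needed.
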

\begin{proof}
Since $\dim(H^1(G_{\QQ,Np},\omega_p^{k-1})) = 1$, by Lemma~\ref{dimonelem}, we see that $\omega_p^{k-2}|_{G_{\QQ_{\ell}}} \neq 1$ for all primes $\ell \mid N$. On the other hand, as $k$ is even, $\omega_p^{k-2}|_{G_{\QQ_\ell}} = 1$ if $p \mid \ell^2-1$.
Combining all these observations, we get that if $\ell \mid N$, then $p \nmid \ell^2-1$.
\end{proof}

Recall that we denoted the tame Artin conductor of $\rhob_0$ by $N_0$. We will now prove that $\rhob_0$ arises from a modular eigenform of level $N_0$.
\begin{lem}
\label{modlem}
There exists a modular eigenform $f$ of level $N_0$ such that $\rho_f$ lifts $\rhob_0$.
\end{lem}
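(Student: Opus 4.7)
The natural approach is to realize $f$ as an Eisenstein series whose $p$-adic Galois representation reduces to $\chibar_1 \oplus \chibar_2$. Since $\rhob_0$ is reducible and semisimple, Eisenstein series are precisely the expected source of such a lift.

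First I would lift $\chibar_1, \chibar_2$ to characteristic-zero Dirichlet characters $\psi_1, \psi_2$ via the Teichm\"uller construction. Because $|\FF^\times|$ is prime to $p$, this lift is canonical and preserves the local conductor at every prime, so $\psi_i$ has Artin conductor $N_{\chibar_i}$. By additivity of the Artin conductor over direct summands, $N_0 = N_{\chibar_1} \cdot N_{\chibar_2}$.

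Next I would choose the weight to be $k = p$ (more generally any $k \geq 2$ with $k \equiv 1 \Mod{p-1}$), so that $\omega_p^{k-1}$ is trivial. The parity condition $\psi_1\psi_2(-1) = (-1)^k$ required for the existence of the Eisenstein series follows from the oddness of $\rhob_0$: $\chibar_1\chibar_2(-1) = -1 = (-1)^p$. The classical Eisenstein series $f := E_k(\psi_1, \psi_2)$ is then a weight-$k$ modular eigenform in $M_k(\Gamma_1(N_{\psi_1} N_{\psi_2})) = M_k(\Gamma_1(N_0))$ with nebentypus $\psi_1\psi_2$ and $T_\ell$-eigenvalue $\psi_1(\ell) + \psi_2(\ell)\,\ell^{k-1}$ for each prime $\ell \nmid N_0 p$.

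Finally, the associated $p$-adic Galois representation is $\rho_f = \psi_1 \oplus \psi_2\chi_p^{k-1}$, semisimple by construction. Reducing mod $\varpi_f$ yields $\chibar_1 \oplus \chibar_2\,\omega_p^{k-1} = \chibar_1 \oplus \chibar_2 = \rhob_0$, since $\omega_p^{p-1}$ is trivial. In particular, $(\tr\rho_f, \det\rho_f) \equiv (\tr\rhob_0, \det\rhob_0) \Mod{\varpi_f}$, so $\rho_f$ lifts $\rhob_0$ in the sense of the setup. There is no serious obstacle here; the only point to watch is that the naive choice $k = 2$ would force $\psi_2\chi_p$ to pick up extra tame ramification at $p$, inflating the level beyond $N_0$, which is exactly why the choice $k = p$ (trivializing $\omega_p^{k-1}$) is essential.
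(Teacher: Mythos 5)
There is a genuine gap. You have implicitly assumed that the Teichm\"uller lifts $\psi_1 = \hat\chibar_1$ and $\psi_2 = \hat\chibar_2$ are unramified at $p$, but this is false in general. The characters $\chibar_i : G_{\QQ,Np} \to \FF^\times$ are only constrained to be unramified outside $Np$, so each may well have a nontrivial $\omega_p$-component; write $\chibar_i = \epsilon_i\omega_p^{a_i}$ with $\epsilon_i$ unramified at $p$. When some $a_i \neq 0$, the Teichm\"uller lift $\psi_i$ is ramified at $p$ with conductor divisible by $p$, and consequently the Eisenstein series $E_k(\psi_1,\psi_2)$ lives in $M_k(\Gamma_1(N_0 \cdot p^e))$ for some $e \geq 1$, not in $M_k(\Gamma_1(N_0))$. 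Your identity ``$M_k(\Gamma_1(N_{\psi_1}N_{\psi_2})) = M_k(\Gamma_1(N_0))$'' conflates the full conductor of $\psi_i$ (which can include the $p$-part) with the prime-to-$p$ Artin conductor $N_0$ appearing in the lemma. Your choice $k\equiv 1 \Mod{p-1}$ trivializes the mod $p$ reduction of $\chi_p^{k-1}$, but that has no bearing on the level: the level is governed by the conductors of $\psi_1,\psi_2$, not by the weight.

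The paper avoids exactly this problem: it removes the $\omega_p$-twists first, setting $\rhob_0' = \epsilon_1 \oplus \epsilon_2\omega_p^{a_2 - a_1}$ (after arranging $a_1 \leq a_2$), then builds the Eisenstein series $E$ of level $N_0$ out of the characters $\hat\epsilon_1, \hat\epsilon_2$ (both unramified at $p$) with the weight $k \equiv 1 + (a_2 - a_1) \Mod{p-1}$ soaking up the difference of the $\omega_p$-exponents, and finally applies the Ramanujan theta operator $\theta^{a_1}$ to the mod $p$ reduction $\bar E$ to restore the missing $\omega_p^{a_1}$-twist. The key point is that $\theta$ acts on mod $p$ forms of fixed prime-to-$p$ level $N_0$, so $\theta^{a_1}(\bar E)$ still has level $N_0$, and the Deligne--Serre lifting lemma then produces a characteristic-zero eigenform $f$ of level $N_0$ lifting it. Your argument works only in the special case $a_1 = a_2 = 0$ (both $\chibar_i$ unramified at $p$); to cover the general case you would need to incorporate something playing the role of the theta-twist, or else choose the Eisenstein data so that at most one of the two summands carries the cyclotomic twist and that one is absorbed into the weight.
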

\begin{proof}
Note that, for $i = 1,2$, there exists an integer $0 \leq a_i \leq p-2$ and a character $\epsilon_i : G_{\QQ,Np} \to \FF^{\times}$ unramified at $p$ such that $\chibar_i = \epsilon_i\omega_p^{a_i}$.
So $\rhob_0 = \epsilon_1\omega_p^{a_1} \oplus \epsilon_2\omega_p^{a_2}$.
Without loss of generality assume $a_1 \leq a_2$ and let $k_0 -1 = a_2-a_1$.
Let $\rhob'_0 = \epsilon_1 \oplus \epsilon_2 \omega_p^{k_0-1}$.
Let $k >2$ be an integer such that $k \equiv k_0 \Mod{p-1}$.
By Lemma~\ref{eisenlem}, the representation $\rho : G_{\QQ,N_0p} \to \GL_2(W(\FF))$ given by $\rho := \hat\epsilon_1 \oplus \hat\epsilon_2\chi_p^{k-1}$ is attached to an Eisenstein series $E$ of level $N_0$ and weight $k$. Hence, $\rho_E$ lifts $\rhob'_0$.

Let $\bar{E}$ be the mod $p$ modular form (in the sense of Serre and Swinnerton-Dyer) corresponding to $E$. Since the $q$-expansion of $E$ belongs to $W(\FF)[[q]]$, $\bar{E}$ is the modular form of weight $k$ and level $N_0$ over $\FF$ whose $q$-expansion is the image of the $q$-expansion of $E$ under the natural surjective map $W(\FF)[[q]] \to \FF[[q]]$. Hence, $\bar{E}$ is an eigenform for all Hecke operators away from $N_0p$.

So $\theta^{a_1}(\bar{E})$ is also a modular eigenform over $\FF$ of level $N_0$ and weight $k + (p+1)a_1$, where $\theta$ is the Ramanujan theta operator.
Since $k >2$, we can use the Deligne-Serre lifting lemma to conclude that there exists an eigenform $f$ of weight $k + (p+1)a_1$ and level $N_0$ lifting $\theta^{a_1}(\bar{E})$. 
In other words, the reduction of $T_{\ell}$-eigenvalue of $f$ modulo $\varpi_f$ is the $T_{\ell}$-eigenvalue of $\theta^{a_1}(\bar{E})$ for all primes $\ell \nmid Np$ (see \S\ref{notsec} for the notation $\varpi_f$). 
From the effect of $\theta$ on $q$-expansions, we conclude that $\rho_f$ lifts $\rhob_0$.
\end{proof}

Before proceeding further, we introduce some more notation.
Let $E$ be a finite extension of $\QQ_p$ such that the residue field of $E$ contains $\FF$. Let $\mathcal{O}_E$ be the ring of integers of $E$ and $\varpi_E$ be its uniformizer.
We say that a character $\Psi : G_{\QQ,Np} \to E^{\times}$ lifts $\bar\psi$ if $\Psi$ takes values in $\mathcal{O}_E^{\times}$ and $\Psi \Mod{\varpi_E} = \bar\psi$.
We now obtain a family of reducible, non-split $p$-adic representations of $G_{\QQ,Np}$ which are unobstructed. It will be used in the proof of Proposition~\ref{modprop}.

\begin{prop}
\label{unobsprop}
Let $\bar\psi$ be the character of $G_{\QQ,Np}$ unramified at $p$ such that $\chibar = \bar\psi.\omega_p^{k_0-1}$ for some integer $1 \leq k_0 \leq p-1$.
\begin{enumerate}
\item\label{item1} There exist infinitely many integers $k > 2$ such that $k \equiv k_0 \Mod{p-1}$ and $$\dim_E(H^1(G_{\QQ,Np}, \Psi\chi_p^{k-1})) = \dim_E(H^1(G_{\QQ,Np}, \Psi^{-1}\chi_p^{1-k})) = 1$$ for any lift $\Psi : G_{\QQ,Np} \to E^{\times}$ of $\bar\psi$ unramified at $p$.
\item Let $\Psi : G_{\QQ,Np} \to E^{\times}$ be a lift of $\bar\psi$ which is unramified at $p$. Let $k$ be an integer satisfying the conclusion of part \eqref{item1}. Let $y \in H^1(G_{\QQ,Np}, \Psi\chi_p^{k-1})$ be a non-zero element and $\rho_y : G_{\QQ,Np} \to \GL_2(E)$ be the representation corresponding to $y$. Then $\dim_E(H^1(G_{\QQ,Np},\text{Ad}(\rho_y)))=3$.
\end{enumerate}
\end{prop}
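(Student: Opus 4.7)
My plan to prove part (1) is to apply the Tate global Euler characteristic formula together with global duality directly to the characters $\Psi\chi_p^{k-1}$ and $\Psi^{-1}\chi_p^{1-k}$, in the same spirit as the computation in the proof of Lemma~\ref{dimonelem}. For any continuous character $\eta \colon G_{\QQ,Np} \to E^\times$, these formulas give
\[
\dim_E H^1(G_{\QQ,Np}, \eta) = \dim_E H^0(G_{\QQ,Np}, \eta) + \dim_E H^0(G_{\QQ,Np}, \eta^{-1}\chi_p) + 1 - \dim_E H^0(G_\infty, \eta).
\]
I first observe that for $k > 2$ and $\eta \in \{\Psi\chi_p^{k-1},\Psi^{-1}\chi_p^{1-k}\}$, neither $\eta$ nor $\eta^{-1}\chi_p$ is trivial, because the restriction to $I_p$ is a nontrivial power of $\chi_p|_{I_p}$ (using that $\Psi$ is unramified at $p$). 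Hence the two global $H^0$ terms vanish, and all the content of part (1) is concentrated in the archimedean term.

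Because $\rhob_0 = \chibar_1 \oplus \chibar_2$ is odd, $\chibar(c) = \chibar_1(c)\chibar_2(c)^{-1} = -1$, and $\chibar = \bar\psi\omega_p^{k_0-1}$ yields $\bar\psi(c)(-1)^{k_0-1} = -1$. Since $p$ is odd, $p-1$ is even, so any $k \equiv k_0 \Mod{p-1}$ has the same parity as $k_0$. Moreover, $\Psi(c)$ is a root of $X^2-1$ in $\mathcal{O}_E^\times$ with residue class $\bar\psi(c) \in \{\pm 1\}$, so $\Psi(c) = \bar\psi(c)$. Combining, $(\Psi\chi_p^{k-1})(c) = \bar\psi(c)(-1)^{k-1} = -1$, so $\Psi\chi_p^{k-1}$ is odd and its archimedean $H^0$ vanishes; the same holds for $\Psi^{-1}\chi_p^{1-k}$ by the identity $(-1)^{1-k} = (-1)^{k-1}$ and $\bar\psi(c)^{-1} = \bar\psi(c)$. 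Thus both $H^1$'s have dimension $1$ for every $k > 2$ with $k \equiv k_0 \Mod{p-1}$, providing the required infinitude.

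For part (2), I apply the same Euler characteristic formula to the $4$-dimensional module $\text{Ad}(\rho_y)$, obtaining
\[
\dim_E H^1(G_{\QQ,Np},\text{Ad}(\rho_y)) = h^0(G_{\QQ,Np},\text{Ad}(\rho_y)) + h^2(G_{\QQ,Np},\text{Ad}(\rho_y)) + 4 - h^0(G_\infty,\text{Ad}(\rho_y)).
\]
By part (1), $\det\rho_y = \Psi\chi_p^{k-1}$ is odd, so $c$ acts on $\rho_y$ with eigenvalues $+1,-1$ and on $\text{Ad}(\rho_y)$ with eigenvalues $1,1,-1,-1$; hence the archimedean term equals $2$. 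Since $\rho_y$ is a non-split extension of the trivial character by $\eta = \Psi\chi_p^{k-1} \neq 1$, a standard matrix computation shows that any $G_{\QQ,Np}$-equivariant endomorphism of $\rho_y$ is scalar, giving $h^0(\text{Ad}(\rho_y)) = 1$. To finish, I must show $h^2 = 0$, which by Poitou-Tate translates to showing there is no nonzero $G_{\QQ,Np}$-equivariant map $\rho_y \to \rho_y \otimes \chi_p^{-1}$; a kernel-image analysis using the filtrations on both sides, together with the fact that the four characters $1,\eta,\chi_p^{-1},\eta\chi_p^{-1}$ are pairwise distinct (because $\Psi$ is unramified at $p$ and $k > 2$, so $\chi_p$ and $\chi_p^{k-1}$ have infinite order on $I_p$), rules out all nonzero maps. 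Combining, $\dim_E H^1(G_{\QQ,Np},\text{Ad}(\rho_y)) = 1 + 0 + 4 - 2 = 3$.

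The main obstacle will be verifying $h^2 = 0$: a priori the reducibility of $\rho_y$ and the presence of two trivial Jordan-Hölder constituents could produce extraneous $G$-equivariant morphisms, and the argument depends delicately on the unramifiedness of $\Psi$ at $p$ together with $k > 2$ to separate all Jordan-Hölder characters appearing on the two sides of the map $\rho_y \to \rho_y(-1)$.
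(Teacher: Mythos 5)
Your proof of part (1) rests on the identity
\[
\dim_E H^1(G_{\QQ,Np},\eta) \;=\; \dim_E H^0(G_{\QQ,Np},\eta) + \dim_E H^0(G_{\QQ,Np},\eta^{-1}\chi_p) + 1 - \dim_E H^0(G_\infty,\eta),
\]
which is not a theorem: it replaces the global $H^2$ term in the Euler characteristic formula by the global $H^0$ of the Tate dual, i.e.\ it applies \emph{local} Tate duality to the global group $G_{\QQ,Np}$. The correct bookkeeping for the odd character $\eta=\Psi\chi_p^{k-1}$ is $h^1(\eta)=1+h^0(\eta)+h^2(\eta)$, and by Poitou--Tate the global $h^2(\eta)$ is controlled by the local $H^2$'s together with the everywhere-locally-trivial subgroup $H^1_0(G_{\QQ,Np},\eta^{-1}\chi_p)$ of the Tate dual (the same group $H^1_0$ that appears in Lemma~\ref{dimonelem}); equivalently one can use the Greenberg--Wiles formula, which again produces this $H^1_0$ term, not a global $H^0$. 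Your argument makes this Sha-type term disappear without justification, yet it is exactly the crux of the proposition: it is an eigenspace of an unramified Iwasawa module over the cyclotomic tower of $F_\Psi$, and the paper kills it for all but finitely many $k\equiv k_0 \Mod{p-1}$ by invoking Ferrero--Washington ($\mu=0$), which forces that module to have finite $\ZZ_p$-rank. In particular your stronger conclusion, that the dimension is $1$ for \emph{every} $k>2$ in the congruence class, is not known and should not be expected; the restriction to ``infinitely many $k$'' in the statement is precisely there because of this class-group term. Your parity computation is fine, but it only disposes of the archimedean term.

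The same confusion undermines part (2): the vanishing of $H^2(G_{\QQ,Np},\text{Ad}(\rho_y))$ is \emph{not} equivalent to the non-existence of a nonzero $G_{\QQ,Np}$-equivariant map $\rho_y\to\rho_y\otimes\chi_p^{-1}$ --- that condition only computes an $H^0$, and for the global group $H^2$ is not dual to such an $H^0$. So even granting your kernel--image analysis, you have not shown $h^2(\text{Ad}(\rho_y))=0$. The paper's route is different and does go through: write $\text{Ad}(\rho_y)=1\oplus\text{Ad}^0(\rho_y)$, filter $\text{Ad}^0(\rho_y)$ with graded pieces $E(\Psi\chi_p^{k-1})$, $E(1)$, $E(\Psi^{-1}\chi_p^{1-k})$, use $H^2(G_{\QQ,Np},1)=0$ to get $h^2(\text{Ad}^0(\rho_y))\le h^2(\Psi\chi_p^{k-1})+h^2(\Psi^{-1}\chi_p^{1-k})$, and then observe that both terms on the right vanish because part (1) gives $h^1=1$ for these odd characters, so the global Euler characteristic formula forces $h^2=0$. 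In other words, part (2) is deduced from a correctly established part (1) (which in turn needs the Iwasawa-theoretic input), not from a duality statement for $\text{Ad}$; as written, your argument has a genuine gap at both of these points.
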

\begin{proof}
 Let $\Psi : G_{\QQ,Np} \to E^{\times}$ be a lift of $\bar\psi$ which is unramified at $p$. Let $F_{\Psi}$ be the extension of $\QQ$ given by the compositum of $F_{\chibar}$ and the fixed field of $\ker(\Psi)$. So, $F_{\Psi}$ is a finite abelian extension of $\QQ$. By abuse of notation, denote $\Psi\chi_p^{k-1}|_{G_{\QQ_q}}$ (resp. $\Psi^{-1}\chi_p^{1-k}|_{G_{\QQ_q}}$)  by $\Psi\chi_p^{k-1}$ (resp. $\Psi^{-1}\chi_p^{1-k}$) for all primes $q$ throughout this proof. Let $k > 2$ be an integer such that $k \equiv k_0 \Mod{p-1}$ and let $$H^1_0(G_{\QQ,Np},\Psi\chi_p^{k-1}) := \ker(H^1(G_{\QQ,Np},\Psi\chi_p^{k-1}) \to H^1(G_{\QQ_p},\Psi\chi_p^{k-1}) \times \prod_{\ell \mid N} H^1(G_{\QQ_\ell},\Psi\chi_p^{k-1})).$$

Let $F_{\Psi,\infty}$ be the cyclotomic $\ZZ_p$-extension of $F_{\Psi}$.
A non-zero element of $H^1_0(G_{\QQ,Np},\Psi\chi_p^{k-1})$ gives an abelian, unramified $\ZZ_p^r$-extension of $F_{\Psi,\infty}$ on which $\text{Gal}(F_{\Psi,\infty}/F_{\Psi})$ acts via $\chi_p^{k-1}$.
Let $K$ be the maximal abelian unramified pro-$p$ extension of $F_{\Psi,\infty}$.
Note that $K$ is Galois over $F_{\Psi}$.
Moreover, as $\text{Gal}(K/F_{\Psi,\infty})$ is abelian, we get an action of $\text{Gal}(F_{\Psi,\infty}/F_{\Psi})$ on $\text{Gal}(K/F_{\Psi,\infty})$ by conjugation.
Therefore, if $H^1_0(G_{\QQ,Np},\Psi\chi_p^{k-1}) \neq 0$, then we get a quotient of the $\text{Gal}(F_{\Psi,\infty}/F_{\Psi})$-representation $\text{Gal}(K/F_{\Psi,\infty}) \otimes_{\ZZ_p} \QQ_p$ on which $\text{Gal}(F_{\Psi,\infty}/F_{\Psi})$ acts via $\chi_p^{k-1}$.

Note that $F_{\Psi}$ is an abelian CM extension of $\QQ$. We know, by work of Ferrero--Washington (main theorem of \cite{FW}), that the $\mu$-invariant of $F_{\Psi}$ is $0$.
Hence, $\text{Gal}(K/F_{\Psi,\infty})$ has finite $\ZZ_p$-rank i.e. $\text{Gal}(K/F_{\Psi,\infty}) \otimes_{\ZZ_p} \QQ_p$ is a finite dimensional $\QQ_p$-vector space.
Thus there are only finitely many \emph{distinct} characters $\chi$ of $\text{Gal}(F_{\Psi,\infty}/F)$ such that $\QQ_p(\chi)$ is a quotient of $\text{Gal}(K/F_{\Psi,\infty}) \otimes_{\ZZ_p} \QQ_p$ (as a $\text{Gal}(F_{\Psi,\infty}/F)$-representation).
Therefore, $H^1_0(G_{\QQ,Np},\Psi\chi_p^{k-1})=0$ for all but finitely many $k \equiv k_0 \Mod{p-1}$.

We can also define $H^1_0(G_{\QQ,Np},\Psi^{-1}\chi_p^{1-k})$ similarly and the argument given above also proves that $H^1_0(G_{\QQ,Np},\Psi^{-1}\chi_p^{1-k})=0$ for all but finitely many $k \equiv k_0 \Mod{p-1}$.
Therefore, we get that for all but finitely many $k \equiv k_0 \Mod{p-1}$, $H^1_0(G_{\QQ,Np},\Psi\chi_p^{k-1}) = H^1_0(G_{\QQ,Np},\Psi^{-1}\chi_p^{1-k}) = 0$.

There are only finitely many characters $\Psi : G_{\QQ,Np} \to E^{\times}$ which lift $\bar\psi$ and which are unramified at $p$.
Hence, it follows that there are infinitely many integers $k$ such that $k>2$, $k \equiv k_0 \Mod{p-1}$ and $H^1_0(G_{\QQ,Np},\Psi\chi_p^{k-1}) = H^1_0(G_{\QQ,Np},\Psi^{-1}\chi_p^{1-k}) = 0$ for all lifts $\Psi$ of $\bar\psi$ which are unramified at $p$.

Suppose $k > 2$ and $H^1_0(G_{\QQ,Np},\Psi^{-1}\chi_p^{1-k})= H^1_0(G_{\QQ,Np},\Psi\chi_p^{k-1}) = 0$. As $k>2$, the local Euler characteristic formulas imply that 
$$\dim_E(H^1(G_{\QQ_p},\Psi^{-1}\chi_p^{1-k})) = \dim_E(H^1(G_{\QQ_p},\Psi\chi_p^{k-1})) = 1$$ and $H^1(G_{\QQ_\ell},\Psi^{-1}\chi_p^{1-k})= H^1(G_{\QQ_\ell},\Psi\chi_p^{k-1}) = 0$ for all primes $\ell \mid N$.
Hence, it follows that $$\dim_E(H^1(G_{\QQ,Np},\Psi^{-1}\chi_p^{1-k})) \leq 1 \text{ and } \dim_E(H^1(G_{\QQ,Np},\Psi\chi_p^{k-1})) \leq 1.$$ Since $\Psi\chi_p^{k-1}$ is an odd character, the global Euler characteristic formula implies that $$\dim_E(H^1(G_{\QQ,Np},\Psi^{-1}\chi_p^{1-k})) \geq 1 \text{ and }\dim_E(H^1(G_{\QQ,Np},\Psi\chi_p^{k-1})) \geq 1.$$ This finishes the proof the first part of the proposition.

 We now move to the second part of the proposition.
Note that $\rho_y$ is an odd representation. So, the global Euler characteristic formula implies that $\dim_E(H^1(G_{\QQ,Np},\text{Ad}(\rho_y))) - \dim_E(H^2(G_{\QQ,Np},\text{Ad}(\rho_y)))=3$. Hence, it suffices to prove that $H^2(G_{\QQ,Np},\text{Ad}(\rho_y)) = 0$. Note that $\text{Ad}(\rho_y) = 1 \oplus \text{Ad}^0(\rho_y)$, where $\text{Ad}^0(\rho_y)$ is the subspace of $\text{Ad}(\rho_y)$ given by matrices with trace $0$. As $H^2(G_{\QQ,Np},1) = 0$, it suffices to prove that $H^2(G_{\QQ,Np},\text{Ad}^0(\rho_y)) = 0$.

Let $V$ be the subspace of upper triangular matrices with trace $0$ in $\text{Ad}^0(\rho_y)$. It is clearly a $G_{\QQ,Np}$-sub-representation of $\text{Ad}^0(\rho_y)$. We have the following exact sequences of $G_{\QQ,Np}$ representations:
$$0 \to V \to \text{Ad}^0(\rho_y) \to E(\Psi^{-1}\chi_p^{1-k}) \to 0,$$
$$ 0 \to E(\Psi\chi_p^{k-1}) \to V \to E \to 0.$$
As $H^2(G_{\QQ,Np},1) = 0$, we conclude, from the exact sequences above, that $$\dim_E(H^2(G_{\QQ,Np}, \text{Ad}^0(\rho_y))) \leq \dim_E(H^2(G_{\QQ,Np}, \Psi\chi_p^{k-1})) + \dim_E(H^2(G_{\QQ,Np}, \Psi^{-1}\chi_p^{1-k})).$$
 We have chosen $k$ satisfying the conclusion of part \eqref{item1}. Hence, $\dim_E(H^1(G_{\QQ,Np}, \Psi\chi_p^{k-1})) = \dim_E(H^1(G_{\QQ,Np}, \Psi^{-1}\chi_p^{1-k})) = 1$. Since $\Psi\chi_p^{k-1}$ is an odd character, the global Euler characteristic formula implies that $H^2(G_{\QQ,Np}, \Psi\chi_p^{k-1}) = H^2(G_{\QQ,Np}, \Psi^{-1}\chi_p^{1-k}) = 0$. This implies that $H^2(G_{\QQ,Np}, \text{Ad}^0(\rho_y)) = 0$ which finishes the proof of the second part of the proposition.
\end{proof}

\section{Properties of $R_{\rhob_x}$}
\label{defosec}

In this section, we will study the structural properties of $R_{\rhob_x}$. In \S\ref{gmasec}, we begin with proving a result which gives a description of $R_{\rhob_x}[\rho^{\univ}(G_{\QQ_p})]$ in the form of a Generalized Matrix Algebra (GMA) over $R_{\rhob_x}$. We use this description to define a quotient $R_k$ of $R_{\rhob_x}$ for every integer $k \geq 2$. 
In \S\ref{finsec}, we prove finiteness of $R_k$ over $W(\FF)$ for every $k \geq 2$ and study its consequences for $R_{\rhob_x}$.
In \S\ref{modsec}, we define the notion of modular primes and prove the existence of smooth modular points in every component of $\spec(R_{\rhob_x})$.
These two results play a major role in establishing the main theorem.

We refer the reader to \cite[Chapter 1]{BC} and \cite[Section 2]{Bel} for the definition and basic properties of GMAs.
However, we will mostly be working with sub-$R$-GMAs of $M_2(R)$ for some ring $R$.
To be precise, given ideals $B$ and $C$ of a ring $R$, we define
$$\begin{pmatrix} R & B \\ C & R\end{pmatrix} := \Bigg\{\begin{pmatrix} a & b \\ c & d \end{pmatrix} \in M_2(R) \mid a,d \in R, b \in B, c \in C \Bigg\}.$$
Then we say that an $R$-algebra $A$ is a \emph{sub-$R$-GMA} of $M_2(R)$ if $A$ is an $R$-subalgebra of $M_2(R)$ and there exist ideals $B$ and $C$ of $R$ such that 
$$A = \begin{pmatrix} R & B \\ C & R\end{pmatrix} = \Bigg\{\begin{pmatrix} a & b \\ c & d \end{pmatrix} \in M_2(R) \mid a,d \in R, b \in B, c \in C \Bigg\}.$$

\subsection{GMA description of $\rho^{\univ}(G_{\QQ_p})$}
\label{gmasec}
Let $g_0 \in G_{\QQ_p}$ be the element fixed in Definition~\ref{defodef2}. We now begin with a simple observation:
\begin{lem}
\label{diagonal}
There exists a $P \in Id + M_2(m_{R_{\rhob_x}})$ such that $P\rho^{\univ}(g_0)P^{-1} = \begin{pmatrix} a & 0\\ 0 & b \end{pmatrix}$ where $a, b \in R_{\rhob_x}$ are such that $a \equiv \chibar_1(g_0) \Mod{m_{R_{\rhob_x}}}$ and $b \equiv \chibar_2(g_0) \Mod{m_{R_{\rhob_x}}}$.
\end{lem}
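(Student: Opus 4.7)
The plan is to produce $P$ by explicitly diagonalizing $\rho^{\univ}(g_0)$ via Hensel's lemma together with a choice of eigenvectors, exploiting the fact that the two reductions $\chibar_1(g_0)$ and $\chibar_2(g_0)$ are \emph{distinct} elements of $\FF^\times$, so their difference is a unit that remains a unit after any lifting to $R_{\rhob_x}$.

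First, I would look at the characteristic polynomial $f(X) = X^2 - \tr(\rho^{\univ}(g_0))X + \det(\rho^{\univ}(g_0)) \in R_{\rhob_x}[X]$. Its reduction modulo $m_{R_{\rhob_x}}$ equals $(X - \chibar_1(g_0))(X - \chibar_2(g_0))$, which factors as a product of distinct linear factors in $\FF[X]$. Since $R_{\rhob_x}$ is a complete local Noetherian ring, Hensel's lemma yields a factorization $f(X) = (X-a)(X-b)$ in $R_{\rhob_x}[X]$ with $a \equiv \chibar_1(g_0)$ and $b \equiv \chibar_2(g_0)$ modulo $m_{R_{\rhob_x}}$. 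In particular $a - b \in R_{\rhob_x}^\times$.

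Second, I would construct the eigenvectors using Cayley--Hamilton, which gives $(\rho^{\univ}(g_0)-aI)(\rho^{\univ}(g_0)-bI)=0$. Set $v_1 := (\rho^{\univ}(g_0)-bI)e_1$ and $v_2 := (\rho^{\univ}(g_0)-aI)e_2$, where $e_1, e_2$ are the standard basis vectors of $R_{\rhob_x}^2$. Then $\rho^{\univ}(g_0)v_1 = av_1$ and $\rho^{\univ}(g_0)v_2 = bv_2$, and reducing modulo $m_{R_{\rhob_x}}$ shows $v_1 \equiv (\chibar_1(g_0)-\chibar_2(g_0))e_1$ and $v_2 \equiv (\chibar_2(g_0)-\chibar_1(g_0))e_2$. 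Since the scalars on the right are units in $\FF$, the first coordinate of $v_1$ and the second coordinate of $v_2$ are units in $R_{\rhob_x}$. Rescaling, I obtain eigenvectors $v_1' = \bigl(\begin{smallmatrix}1\\ \alpha\end{smallmatrix}\bigr)$ and $v_2' = \bigl(\begin{smallmatrix}\beta\\ 1\end{smallmatrix}\bigr)$ with $\alpha, \beta \in m_{R_{\rhob_x}}$.

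Finally, I would let $Q = [v_1' \,|\, v_2'] \in Id + M_2(m_{R_{\rhob_x}})$; its determinant $1-\alpha\beta$ is a unit, so $Q$ is invertible, and $P := Q^{-1} \in Id + M_2(m_{R_{\rhob_x}})$ as well. By construction $\rho^{\univ}(g_0)Q = Q\,\text{diag}(a,b)$, hence $P\rho^{\univ}(g_0)P^{-1} = \text{diag}(a,b)$, giving the claim. There is no real obstacle; the only point that needs care is verifying that $P$ (and not merely $P^{-1}$) lies in $Id + M_2(m_{R_{\rhob_x}})$, which follows because inversion preserves this subgroup of $\GL_2(R_{\rhob_x})$.
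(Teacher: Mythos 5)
Your proof is correct and rests on the same two pillars as the paper's: Hensel's lemma to factor the characteristic polynomial as $(X-a)(X-b)$ with $a-b$ a unit, and Cayley--Hamilton to split $R_{\rhob_x}^2$ into eigenspaces for $a$ and $b$. Where the two proofs diverge is in the final step. The paper works abstractly: it sets $V=\ker(\rho^{\univ}(g_0)-a)$ and $W=\ker(\rho^{\univ}(g_0)-b)$, shows $M_0=V\oplus W$, applies Nakayama to see each is free of rank one, obtains some change-of-basis matrix $Q\in\GL_2(R_{\rhob_x})$, and then has to run a separate argument (that $\bar Q$ must be diagonal, hence $Q$ can be corrected by a diagonal lift) to land in $Id+M_2(m_{R_{\rhob_x}})$. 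Your version instead writes the eigenvectors explicitly as $v_1=(\rho^{\univ}(g_0)-bI)e_1$ and $v_2=(\rho^{\univ}(g_0)-aI)e_2$, observes that modulo $m_{R_{\rhob_x}}$ these are unit multiples of $e_1$ and $e_2$ (because $\rhob_x(g_0)$ is diagonal with distinct entries), and normalizes so that $Q$ is automatically of the required form; the final correction step in the paper is thereby eliminated. The remark that $Id+M_2(m_{R_{\rhob_x}})$ is closed under inversion (so $P=Q^{-1}$ is still of this form) is exactly the small point that needs to be, and is, checked. Both approaches are sound; yours is a bit more computational and self-contained, the paper's a bit more conceptual.
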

\begin{proof}
The characteristic polynomial $f(X)$ of $\rho^{\univ}(g_0)$ reduces to the characteristic polynomial $\bar{f}(X)$ of $\rhob_x(g_0)$ modulo $m_{R_{\rhob_x}}$. By assumption, $\bar{f}(X)$ has roots $\chibar_1(g_0)$ and $\chibar_2(g_0)$ which are distinct. Therefore, by Hensel's lemma, $f(X) = (X-a)(X-b)$ such that $a, b \in R_{\rhob_x}$ with $a \equiv \chibar_1(g_0) \Mod{m_{R_{\rhob_x}}}$ and $b \equiv \chibar_2(g_0) \Mod{m_{R_{\rhob_x}}}$.

Let $M_0$ be the free $R_{\rhob_x}$-module underlying the representation $\rho^{\univ}$ and $V$ and $W$ be the submodules of $M_0$ given by $\ker(\rho^{\univ}(g_0)-a)$ and $\ker(\rho^{\univ}(g_0)-b)$, respectively. As $a-b$ is a unit in $R_{\rhob_x}$, it follows that $$(b-a)^{-1}((\rho^{\univ}(g_0)-a)-(\rho^{\univ}(g_0)-b)) = \begin{pmatrix} 1 & 0\\ 0 & 1\end{pmatrix}.$$ 
Thus, if $m \in M_0$, then $$m = (b-a)^{-1}((\rho^{\univ}(g_0)-a)(m)) - (b-a)^{-1}((\rho^{\univ}(g_0)-b)(m)).$$
Let $w = (b-a)^{-1}((\rho^{\univ}(g_0)-a)(m))$ and $v=(b-a)^{-1}((\rho^{\univ}(g_0)-b)(m))$.
Since $f(X)$ is the characteristic polynomial of $\rho^{\univ}(g_0)$, $$f(\rho^{\univ}(g_0)) = (\rho^{\univ}(g_0)-a)(\rho^{\univ}(g_0)-b)=0.$$
Hence, we get that $v \in V$ and $w \in W$.
So $M_0 = V + W$.
Moreover, if $v' \in V \cap W$, then $(b-a)v'=0$.
As $b-a$ is a unit in $R_{\rhob_x}$, this means $v'=0$.
Therefore, $M_0 = V \oplus W$. 

Now $V/m_{R_{\rhob_x}}V$ and $W/m_{R_{\rhob_x}}W$ are the eigenspaces of $\rhob_x(g_0)$ with eigenvalues $\chibar_1(g_0)$ and $\chibar_2(g_0)$, respectively. 
Hence, by Nakayama's lemma, both $V$ and $W$ are generated by one element each. As $M_0$ is a free $R_{\rhob_x}$ module of rank $2$, it follows that both $V$ and $W$ are free $R_{\rhob_x}$ modules of rank $1$. 

This implies that there exists a $Q \in \GL_2(R_{\rhob_x})$ such that $Q\rho^{\univ}(g_0)Q^{-1} = \begin{pmatrix} a & 0\\ 0 & b \end{pmatrix}$. Since, $Q\rho^{\univ}(g_0)Q^{-1} \Mod{m_{R_{\rhob_x}}} = \rhob_x(g_0) = \rho^{\univ}(g_0) \Mod{m_{R_{\rhob_x}}}$ and $\rhob_x(g_0)$ is a non scalar diagonal, it follows that $\bar Q := Q \Mod{m_{R_{\rhob_x}}}$ is a diagonal matrix. Let $Q'$ be a diagonal matrix in $\GL_2(R_{\rhob_x})$ such that $Q' \Mod{m_{R_{\rhob_x}}} = \bar Q$. Hence, ${ Q'}^{-1}Q$ is the matrix in $Id + M_2(m_x)$ satisfying the claim in the lemma.
\end{proof}

By Lemma~\ref{diagonal}, it follows that we can assume that $\rho^{\univ}(g_0)$ is diagonal by replacing $\rho^{\univ}$ by a suitable deformation in its equivalence class if necessary. We will assume and use this in the rest of the article.

Since $\rho^{\univ}(g_0)$ is diagonal with distinct diagonal entries modulo $m_{R_{\rhob_x}}$, by \cite[Lemma $2.4.5$]{Bel}, it follows that $R_{\rhob_x}[\rho^{\univ}(G_{\QQ_p})]$ is a sub-$R_{\rhob_x}$-GMA of $M_2(R_{\rhob_x})$ i.e. there exist ideals $B_p$ and $C_p$ of $R_{\rhob_x}$ such that $R_{\rhob_x}[\rho^{\univ}(G_{\QQ_p})] = \begin{pmatrix} R_{\rhob_x} & B_p \\ C_p & R_{\rhob_x} \end{pmatrix}$. Since $\chibar|_{G_{\QQ_p}} \neq 1, \omega_p^{-1}$, by the local Euler characteristic formula, it follows that $\dim(H^1(G_{\QQ_p},\chibar^{-1}|_{G_{\QQ_p}})) =1$. So, by \cite[Theorem $1.5.5$]{BC}, $C_p$ is generated by at most $1$ element (see \cite[Lemma $2.5$]{D2} and its proof for more details).

Recall that $G_{\QQ_p}^{\text{ab}} \simeq \ZZ_p^{\times} \times \hat\ZZ$ and under this isomorphism, the image of $I_p$ in $G_{\QQ_p}^{\text{ab}}$ gets mapped to $\ZZ_p^{\times}$.
As a result, we get a surjective homomorphism $\phi_p : I_p \twoheadrightarrow \ZZ_p^{\times}$.
Choose a topological generator $\nu$ of $1+p\ZZ_p \subset \ZZ_p^{\times}$ and fix an element $i_p \in I_p$ such that $\phi_p(i_p) = \nu$.
\begin{defi}
\label{eltdef}
Under the notation developed above:
\begin{itemize}
\item Let $\alpha \in R_{\rhob_x}$ be a generator of the ideal $C_p$ if $C_p \neq (0)$ and $0$ otherwise. 
\item If $\rho^{\univ}(i_p) = \begin{pmatrix} a & b\\ c & d \end{pmatrix}$ then $d \equiv \hat\chibar_2(i_p) \pmod{m_{R_{\rhob_x}}}$.
Let $\beta \in m_{R_{\rhob_x}}$ be the element such that $d=\hat\chibar_2(i_p)(1+\beta)$.
\item Now $\det(\rhob_x(i_p)) =1$. Let $\gamma \in m_{R_{\rhob_x}}$ be the element such that $\det(\rho^{\univ}(i_p)) = 1+\gamma$. 
\item For an integer $k \geq 2$, let $\delta_k = 1 + \gamma - \chi_p^{k-1}(i_p)$. Note that, $\delta_k \in m_{R_{\rhob_x}}$.
\item For $k \geq 2$, let $R_k=R_{\rhob_x}/(\alpha,\beta,\delta_k)$.
\end{itemize}
\end{defi}

 Specializing to $k=2$, let $\rho :G_{\QQ,Np} \to \GL_2(R_2)$ be the representation obtained by composing $\rho^{\univ}$ with the natural surjective map $\GL_2(R_{\rhob_x}) \to \GL_2(R_2)$. 
\begin{lem} 
\label{selmlem}
The representation $\rho \otimes (\hat\chibar_2)^{-1} : G_{\QQ,Np} \to \GL_2(R_2)$ is a \emph{Selmer} deformation of $\begin{pmatrix} \chibar & * \\ 0 & 1\end{pmatrix}$ (as defined in Definition~\ref{defodef3}).
\end{lem}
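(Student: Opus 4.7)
The plan is to verify the four defining properties of a Selmer deformation of $\begin{pmatrix} \chibar & * \\ 0 & 1 \end{pmatrix}$ listed in Definition~\ref{defodef}(3) for $\rho':=\rho \otimes \hat\chibar_2^{-1}$: that it reduces to $\rhob'_x$; that its restriction to $G_{\QQ_p}$ is (conjugate to) upper triangular; that its lower-right diagonal character is trivial on $I_p$; and that $\det(\rho')|_{I_p} = \hat\chibar\hat\omega_p^{-1}\chi_p|_{I_p}$. The three generators $\alpha,\beta,\delta_k$ of the ideal defining $R_k$ were introduced precisely to cut out these conditions, so I expect the proof to reduce to checking that killing $\alpha$, $\beta$, and $\delta_2$ yields respectively the upper-triangularity, the triviality of the lower-right character on $I_p$, and the determinant condition at $k=2$.

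The reduction is immediate: $\rho \bmod m_{R_2} = \rhob_x$, so $\rho' \bmod m_{R_2} = \rhob_x \otimes \chibar_2^{-1} = \rhob'_x$. For the upper-triangularity, the GMA structure $R_{\rhob_x}[\rho^{\univ}(G_{\QQ_p})] = \begin{pmatrix} R_{\rhob_x} & B_p \\ C_p & R_{\rhob_x} \end{pmatrix}$ together with $C_p = \alpha R_{\rhob_x}$ forces every lower-left entry of $\rho^{\univ}|_{G_{\QQ_p}}$ to lie in the ideal $(\alpha)$, hence to vanish in $R_2$. For the triviality of the lower-right character $\psi_2$ on $I_p$, I would compute $\psi_2(i_p) = \hat\chibar_2(i_p)^{-1}\cdot\hat\chibar_2(i_p)(1+\beta) = 1+\beta$, which equals $1$ in $R_2$. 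Since $\psi_2$ is a continuous character into the pro-$p$ group $1+m_{R_2}$, it factors through the pro-$p$ quotient of $I_p^{\mathrm{ab}}$, which by local class field theory is topologically generated by $i_p$, giving $\psi_2|_{I_p}=1$.

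The determinant condition is the most delicate step, since both sides are characters on all of $I_p$, not merely its pro-$p$ quotient. Via local class field theory $I_p^{\mathrm{ab}} \cong \mu_{p-1} \times (1+p\ZZ_p)$, so agreement can be checked separately on the tame part $\mu_{p-1}$ and on the topological generator $i_p$ of the wild part. On $\mu_{p-1}$, any continuous character into $R_2^{\times}$ lifts its mod-$m_{R_2}$ reduction uniquely (prime-to-$p$ roots of unity in $R_2^{\times}$ are Teichm\"uller lifts from $\FF^{\times}$), so both sides restrict to $\hat\chibar|_{\mu_{p-1}}$, using that $\hat\omega_p^{-1}\chi_p$ is trivial on $\mu_{p-1}$. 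On $i_p$, the relation $\delta_2 = 0$ gives $\det(\rho)(i_p)=\chi_p(i_p)$; twisting by $\hat\chibar_2^{-2}(i_p)=1$ (since $\chibar_2$ is trivial on the wild inertia $1+p\ZZ_p$) yields LHS $=\chi_p(i_p)$, and the same wild-inertia vanishing applied to $\chibar$ and $\omega_p$ gives RHS $=\chi_p(i_p)$. I do not foresee a substantive obstacle; the whole statement is essentially a bookkeeping verification, and the only care needed is in cleanly separating the tame $\mu_{p-1}$ and wild $1+p\ZZ_p$ contributions to $I_p^{\mathrm{ab}}$ in the determinant comparison.
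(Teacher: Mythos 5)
Your proposal is correct and follows essentially the same route as the paper's proof: upper-triangularity from $C_p=(\alpha)$, triviality of the lower-right character on $I_p$ from $\beta$, and the determinant condition from $\delta_2$, all hinging on the fact that the image of $i_p$ topologically generates the pro-$p$ part of the image of $I_p$ in $G_{\QQ_p}^{\mathrm{ab}}$. Your explicit separation of the tame $\mu_{p-1}$ and wild $1+p\ZZ_p$ contributions (via uniqueness of Teichm\"uller lifts in $R_2^{\times}$) is exactly the bookkeeping the paper leaves implicit.
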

\begin{proof}
Since $C_p$ is generated by $\alpha$, it follows that $\rho|_{G_{\QQ_p}}$ is of the form $\begin{pmatrix} \eta_1 & *\\ 0 & \eta_2\end{pmatrix}$. For characters $\eta_1, \eta_2 : G_{\QQ_p} \to R_2^{\times}$ lifting $\chibar_1$ and $\chibar_2$, respectively. Recall that, the image of $i_p$ in $G_{\QQ_p}^{\text{ab}}$ is a topological generator of the pro-$p$ part of the image of $I_p$ in $G_{\QQ_p}^{\text{ab}}$. Hence, from definition of $\beta$, it follows that $\eta_2|_{I_p}=\hat\chibar_2|_{I_p}$ and from definition of $\delta_2$, it follows that $\det(\rho)|_{I_p} = \hat\chibar_1 \hat\chibar_2 \hat\omega_p^{-1}\chi_p|_{I_p}$. The lemma follows immediately from the definition of a Selmer deformation of $\rhob'_x$ given in Definition~\ref{defodef3}.
\end{proof}

\subsection{Finiteness of $R_k$ and its consequences}
\label{finsec}
Let $\alpha, \beta, \gamma \in R_{\rhob_x}$ be the elements defined in \S\ref{gmasec}.
We now prove the key finiteness result.

\begin{prop}
\label{finprop}
$R_{\rhob_x}/(p,\alpha,\beta,\gamma)$ is a finite $\FF$-algebra.
\end{prop}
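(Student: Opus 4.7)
My plan is to reduce the statement to the finiteness (over $W(\FF)$) of the Selmer deformation ring $R^{\text{Sel}}_{\rhob'_x}$, for which the literature provides two complementary theorems. The first observation is that the image of $i_p$ in $G_{\QQ_p}^{\text{ab}}$ lies in the pro-$p$ subgroup $1+p\ZZ_p$, so $\chi_p(i_p) \in 1+p\ZZ_p$, and in particular $\chi_p(i_p) \equiv 1 \Mod{p}$. Consequently $\delta_2 - \gamma = 1 - \chi_p(i_p) \in pW(\FF)$, so $(p,\alpha,\beta,\gamma) = (p,\alpha,\beta,\delta_2)$ as ideals of $R_{\rhob_x}$, and therefore $R_{\rhob_x}/(p,\alpha,\beta,\gamma) = R_2/p$. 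It now suffices to prove that $R_2$ is a finite $W(\FF)$-module.

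Next I would identify $R_2$ with $R^{\text{Sel}}_{\rhob'_x}$. Lemma~\ref{selmlem} already provides the map $R^{\text{Sel}}_{\rhob'_x} \to R_2$ classifying the Selmer deformation $\rho \otimes \hat\chibar_2^{-1}$. For the converse direction, twisting the universal Selmer deformation by $\hat\chibar_2$ produces a deformation of $\rhob_x$ over $R^{\text{Sel}}_{\rhob'_x}$; a direct check (using that $\hat\chibar_1(i_p) = \hat\chibar_2(i_p) = \hat\omega_p(i_p) = 1$, since all these Teichmuller characters factor through the prime-to-$p$ quotient of $G_{\QQ_p}^{\text{ab}}$ while $i_p$ maps to a pro-$p$ element) shows that this twisted deformation is annihilated by $\alpha$, $\beta$ and $\delta_2$, and hence factors through $R_2$. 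Uniqueness in the universal properties forces these two maps to be mutually inverse, so $R_2 \simeq R^{\text{Sel}}_{\rhob'_x}$.

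Finally, to establish the finiteness of $R^{\text{Sel}}_{\rhob'_x}$, I would split into the two cases flagged in the introduction. When the hypotheses of \cite{SW1} hold, \cite[Theorem~6.1]{SW1} identifies $R^{\text{Sel}}_{\rhob'_x}$ with a Hecke algebra that is finite over $W(\FF)$. Otherwise I would invoke Pan's finiteness theorem \cite[Theorem~5.1.2]{P}, after verifying that its hypotheses follow from our standing assumptions (in particular from $\chibar|_{G_{\QQ_p}} \neq 1, \omega_p^{-1}$). In either case $R^{\text{Sel}}_{\rhob'_x}$, and hence $R_2$, is finite over $W(\FF)$, so $R_2/p = R_{\rhob_x}/(p,\alpha,\beta,\gamma)$ is a finite $\FF$-algebra. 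The main technical obstacle will be matching Pan's deformation-theoretic hypotheses against the Selmer conditions that define $R^{\text{Sel}}_{\rhob'_x}$; the remainder of the argument is essentially formal bookkeeping with twists by $\hat\chibar_2$ and with the distinction between pro-$p$ and prime-to-$p$ parts of $G_{\QQ_p}^{\text{ab}}$.
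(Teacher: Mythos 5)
Your opening observation is correct and clean: since $i_p$ maps to a pro-$p$ element of $G_{\QQ_p}^{\text{ab}}$, $\chi_p(i_p)\equiv 1 \Mod{p}$, so $\gamma-\delta_2 = \chi_p(i_p)-1 \in (p)$ and $(p,\alpha,\beta,\gamma)=(p,\alpha,\beta,\delta_2)$. Likewise $\hat\chibar_1(i_p)=\hat\chibar_2(i_p)=\hat\omega_p(i_p)=1$ is correct. The overall spirit — reduce to Skinner--Wiles or Pan — is the paper's.

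However, there are two real gaps. First, the isomorphism $R_2\simeq R^{\text{Sel}}_{\rhob'_x}$ is both more than you need and less than you prove. The paper does not establish this; it only produces a \emph{surjection} $R^{\text{Sel}}_{\rhob'_x}\twoheadrightarrow S=R_2/p$. The point is that $\dim(H^1(G_{\QQ,Np},\chibar))=1$ makes $R^{\ps}_{\rhob_0}\twoheadrightarrow R_{\rhob_x}$, hence $S$ is topologically generated over $\FF$ by the traces $\tr(\rho'(g))$ with $\rho'=\rho\otimes\chibar_2^{-1}$, and the map $R^{\text{Sel}}_{\rhob'_x}\to S$ classifying $\rho'$ hits all of these; surjectivity follows with no need to invert anything. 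Your ``converse direction'' is where the trouble lies: the Selmer condition only says $\rho|_{G_{\QQ_p}}$ is \emph{equivalent to} upper triangular, whereas $\alpha$ measures whether the lower-left entries vanish in the specific basis fixed by diagonalizing $\rho^{\univ}(g_0)$. Showing that equivalence to upper triangular plus $g_0$-diagonal forces genuine upper triangularity (and hence $\phi(\alpha)=0$) requires an argument about the conjugating matrix that you do not give; and even if it goes through, it buys nothing over the easier surjectivity argument.

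Second, and more seriously, the ``otherwise'' case does not close. Pan's result \cite[Theorem~5.1.2]{P} is about the ordinary \emph{pseudo}-deformation ring $R^{\ps,\text{ord}}_{\chibar}$ (the paper shows it is finite over $\ZZ_p[[T]]$), not about the Selmer deformation ring $R^{\text{Sel}}_{\rhob'_x}$. Your plan asks Pan to hand you finiteness of $R^{\text{Sel}}_{\rhob'_x}$, which it does not do directly. The paper's Case~2 avoids $R^{\text{Sel}}_{\rhob'_x}$ entirely: it observes that $\tr(\rho'|_{G_{\QQ_p}})$ splits as a sum of two characters (because $\alpha$ has been killed), so $(\tr(\rho'),\det(\rho'))$ is ordinary in the pseudo sense, yielding a surjection $R^{\ps,\text{ord}}_{\chibar}/(p,T)\twoheadrightarrow S$; finiteness then follows from Pan. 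To repair your argument you would need a bridge from $R^{\ps,\text{ord}}_{\chibar}$ to $R^{\text{Sel}}_{\rhob'_x}$ (e.g.\ a surjection using $\dim H^1=1$ again), but at that point you might as well map to $S$ directly as the paper does.
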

\begin{proof}
Denote $R_{\rhob_x}/(p,\alpha,\beta,\gamma)$ by $S$. Note that $S$ is a complete Noetherian $\FF$-algebra. Denote the representation obtained by composing $\rho^{\univ}$ with the natural surjective map $R_{\rhob_x} \to S$ by $\rho$. 

We have already noted, just before Proposition~\ref{nilprop}, that the map $R^{\ps}_{\rhob_0} \to R_{\rhob_x}$ induced by $(\tr(\rho^{\univ}), \det(\rho^{\univ}))$ is surjective. 
Therefore, the map $R^{\ps}_{\rhob_0} \to S$ obtained by composing the map $R^{\ps}_{\rhob_0} \to R_{\rhob_x}$ with the natural surjective map $R_{\rhob_x} \to S$ is also surjective. Observe that this map is same as the one induced by the pseudo-representation $(\tr(\rho), \det(\rho))$. 

As $\det(\rho) = \chibar_1\chibar_2$, $S$ is topologically generated by the set $\{\tr(\rho(g)) \mid g \in G_{\QQ,Np} \}$ over $\FF$ as a complete local Noetherian $\FF$-algebra. 
 As $\chibar_2(g) \in \FF$ for all $g \in G_{\QQ,Np}$, $S$ is also topologically generated by the set $\{\tr(\rho'(g)) \mid g \in G_{\QQ,Np} \}$ over $\FF$ as a complete local Noetherian $\FF$-algebra, where $\rho' = \rho \otimes \chibar_2^{-1}$.

{\bf Case $1$:} Suppose $\chibar$ is ramified at $p$ and the $\chibar$-eigenspace of the $p$-part of the class group of $F_{\chibar}$ is trivial. As $\delta_2 \in (p,\alpha,\beta,\gamma)$, Lemma~\ref{selmlem} implies that $\rho'$ is a Selmer deformation of $\rhob'_x$. 
Let $\phi : R^{\text{Sel}}_{\rhob'_x} \to S$ be the map induced by $\rho'$.
Now if $\rho^{\text{Sel}}: G_{\QQ,Np} \to \GL_2(R^{\text{Sel}}_{\rhob'_x})$ is the universal Selmer deformation of $\rhob'_x$, then $\phi(\tr(\rho^{\text{Sel}}(g))) = \tr(\rho'(g))$ for all $g \in G_{\QQ,Np}$. As the map $\phi$ is a continuous local morphism of complete local Noetherian $W(\FF)$-algebras, it follows that $\phi$ is surjective.

From \cite[Theorem $6.1$]{SW1}, we know that $R^{\text{Sel}}_{\rhob'_x}$ is a finite $W(\FF)$-algebra. 
So it follows that $R^{\text{Sel}}_{\rhob'_x}/(p)$ is a finite $\FF$-algebra and hence, $S$ is a finite $\FF$-algebra. 

{\bf Case $2$:} Suppose either $\chibar$ is unramified at $p$ or the $\chibar$-eigenspace of the $p$-part of the class group of $F_{\chibar}$ is non-trivial.
Let $R^{\ps,\text{ord}}_{\chibar}$ be the universal deformation ring defined in Definition~\ref{defodef4}.
Let $(T_0,D_0) : G_{\QQ,Np} \to R^{\ps,\text{ord}}_{\chibar}$ be the universal pseudo-representation associated to $R^{\ps,\text{ord}}_{\chibar}$ and let $T_0|_{G_{\QQ_p}} = \Psi_1 + \Psi_2$, where $\Psi_1$ and $\Psi_2$ are characters of $G_{\QQ_p}$ deforming $\chibar|_{G_{\QQ_p}}$ and $1$, respectively. Hence, $\Psi_2$ induces a morphism $\ZZ_p[[T]] \to R^{\ps,\text{ord}}_{\chibar}$ which sends $T$ to $\Psi_2(i_p)-1$. By abuse of notation, denote $\Psi_2(i_p) -1$ also by $T$.
Since $\hat\chibar$ satisfies the conditions of \cite[Section $5.1.1$]{P}, \cite[Theorem $5.1.2$]{P} implies that $R^{\ps,\text{ord}}_{\chibar}$ is a finite $\ZZ_p[[T]]$-algebra under the map obtained above.

 In this case, $\tr(\rho'|_{G_{\QQ_p}}) = 1 + \chibar|_{G_{\QQ_p}}$. Hence, the pseudo-representation $(\tr(\rho'),\det(\rho'))$ induces a surjective map
$\phi_S : R^{\ps,\text{ord}}_{\chibar} \to S$.
From the previous paragraph, we conclude that 
\begin{enumerate}
\item $ \tr(\rho'|_{G_{\QQ_p}}) = \phi_S \circ \Psi_1 + \phi_S \circ \Psi_2 = 1 + \chibar|_{G_{\QQ_p}}$,
\item $\phi_S \circ \Psi_1$ and $\phi_S \circ \Psi_2$ are $S$-valued characters of $G_{\QQ_p}$ deforming $\chibar|_{G_{\QQ_p}}$ and $1$, respectively.
\end{enumerate}
As $\chibar|_{G_{\QQ_p}} \neq 1$, it follows, from \cite[Proposition 1.5.1]{BC}, that $\phi_S \circ \Psi_2=1$ and $\phi_S \circ \Psi_1=\chibar|_{G_{\QQ_p}}$.
Thus, $\phi_S(T) = \phi_S(\Psi_2(i_p)-1)=0$.
Therefore, the surjective map $\phi_S : R^{\ps,\text{ord}}_{\chibar} \to S$ factors through $R^{\ps,\text{ord}}_{\chibar}/(p,T)$.
Since $R^{\ps,\text{ord}}_{\chibar}$ is a finite $\ZZ_p[[T]]$-algebra, it follows that $R^{\ps,\text{ord}}_{\chibar}/(p,T)$ is a finite $\FF$-algebra.
 Hence, $S$ is also a finite $\FF$-algebra. This finishes the proof of the proposition.
\end{proof}

We will now record several corollaries of Proposition~\ref{finprop} which will be used later.

\begin{cor}
\label{dimcor}
The ring $R_{\rhob_x}$ is a local complete intersection ring of Krull dimension $4$.
\end{cor}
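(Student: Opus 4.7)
The strategy is to combine the finiteness statement of Proposition~\ref{finprop} (which bounds $\dim R_{\rhob_x}$ from above) with the standard Galois-cohomological presentation of $R_{\rhob_x}$ (which bounds $\dim R_{\rhob_x}$ from below), and then read off the complete intersection property from the fact that the two bounds agree.

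First I would get the upper bound: Proposition~\ref{finprop} says that $R_{\rhob_x}/(p,\alpha,\beta,\gamma)$ is a finite $\FF$-algebra, hence of Krull dimension $0$. Killing four elements of a complete local Noetherian ring can reduce dimension by at most four, so $\dim R_{\rhob_x}\leq 4$.

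Next I would establish the lower bound and simultaneously set up the CI argument. By the general theory of deformation rings, there is a minimal presentation
\[
R_{\rhob_x}\simeq W(\FF)[[X_1,\dots,X_g]]/(f_1,\dots,f_r),
\]
with $g=\dim_{\FF}H^1(G_{\QQ,Np},\ad(\rhob_x))$ and $r\leq \dim_{\FF}H^2(G_{\QQ,Np},\ad(\rhob_x))$. Since $\chibar_1\neq\chibar_2$ and $\rhob_x$ is non-split (because $x\neq 0$), a direct check shows that the only matrices commuting with the image of $\rhob_x$ are scalars, so $H^0(G_{\QQ,Np},\ad(\rhob_x))=\FF$. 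The representation $\rhob_x$ is odd, so the global Euler characteristic formula for $\ad(\rhob_x)=M_2(\FF)$ gives
\[
\dim_{\FF}H^0-\dim_{\FF}H^1+\dim_{\FF}H^2=-2,
\]
hence $\dim H^1-\dim H^2=3$ and thus $g-r\geq 3$. Consequently $\dim R_{\rhob_x}\geq (g+1)-r\geq 4$.

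Combining the two bounds forces $\dim R_{\rhob_x}=4$ and, moreover, $g-r=3$ (so $r$ actually equals $\dim_{\FF}H^2(G_{\QQ,Np},\ad(\rhob_x))$). Since $W(\FF)[[X_1,\dots,X_g]]$ is a regular, hence Cohen--Macaulay, local ring of dimension $g+1$, any ideal minimally generated by $r$ elements whose quotient has dimension $(g+1)-r$ is generated by a regular sequence. Therefore $(f_1,\dots,f_r)$ is a regular sequence and $R_{\rhob_x}$ is a local complete intersection of Krull dimension $4$. The main substantive input is Proposition~\ref{finprop}; the remaining work is the formal dimension count together with the computations of $H^0(\ad(\rhob_x))$ and the global Euler characteristic, neither of which should cause difficulty.
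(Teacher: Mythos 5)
Your proof is correct and follows essentially the same path as the paper's: Proposition~\ref{finprop} gives the upper bound $\dim R_{\rhob_x}\leq 4$ by Krull's principal ideal theorem applied to $(p,\alpha,\beta,\gamma)$, the presentation of $R_{\rhob_x}$ from \cite[Theorem 2.4]{Bo2} together with the global Euler characteristic formula gives the lower bound $\geq 4$, and the resulting equality $g-r=3$ forces the relation ideal to be generated by a regular sequence in the Cohen--Macaulay ring $W(\FF)[[X_1,\dots,X_g]]$. The only difference is cosmetic: you spell out the computation of $H^0(G_{\QQ,Np},\ad(\rhob_x))=\FF$ and the Cohen--Macaulay argument for the complete intersection property, both of which the paper leaves implicit.
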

\begin{proof}
By Proposition~\ref{bocprop}, we know that $R_{\rhob_x} \simeq W(\FF)[[X_1,...,X_m]]/I$ where $m = \dim(H^1(G_{\QQ,Np},\ad(\rhob_x)))$ and $I$ is an ideal with minimal number of generators at most $\dim(H^2(G_{\QQ,Np},\ad(\rhob_x)))$. By the global Euler characteristic formula, we know that $$\dim(H^1(G_{\QQ,Np},\ad(\rhob_x))) - \dim(H^2(G_{\QQ,Np},\ad(\rhob_x))) =3.$$ Hence, by Theorem~\ref{krullthm}, it follows that the Krull dimension of $R_{\rhob_x}$ is at least $4$. 

Suppose the Krull dimension of $R_{\rhob_x}$ is $n$.
By applying Theorem~\ref{krullthm} again, we get that the Krull dimension of $R_{\rhob_x}/(p,\alpha,\beta,\gamma)$ is at least $n-4$. Combining this with Proposition~\ref{finprop}, we see that $n=4$.

Suppose the minimal number of generators of $I$ is $k$. As seen above, $m-k \geq 3$ and Krull dimension of $R_{\rhob_x}$ is at least $1+m-k$. Since the Krull dimension of $R_{\rhob_x}$ is $4$, it follows that $m-k=3$. This proves the remaining part of the corollary.
\end{proof}

\begin{cor}
\label{fincor}
For every integer $k \geq 2$, the ring $R_k$ is a finite $W(\FF)$-algebra of Krull dimension $1$.
\end{cor}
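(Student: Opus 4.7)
The plan is to reduce everything to Proposition \ref{finprop} by observing that the ideals $(p,\alpha,\beta,\delta_k)$ and $(p,\alpha,\beta,\gamma)$ coincide in $R_{\rhob_x}$, and then to invoke the topological Nakayama lemma together with Corollary \ref{dimcor}.

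First I would compare $\gamma$ and $\delta_k$ modulo $p$. By the definition $\delta_k = 1 + \gamma - \chi_p^{k-1}(i_p)$, we have $\gamma - \delta_k = \chi_p^{k-1}(i_p) - 1$ as elements of $R_{\rhob_x}$ (more precisely, of the image of $W(\FF)$ inside $R_{\rhob_x}$). Since $i_p$ is a lift of a topological generator of the pro-$p$ part $1+p\ZZ_p$ of $G_{\QQ_p}^{\mathrm{ab}}$, the element $\chi_p(i_p)$ lies in $1+p\ZZ_p$, and hence $\chi_p^{k-1}(i_p) - 1 \in pW(\FF)$ for every $k \geq 2$. It follows that $\gamma \equiv \delta_k \pmod p$ in $R_{\rhob_x}$, so that
\[
(p,\alpha,\beta,\gamma) \;=\; (p,\alpha,\beta,\delta_k)
\]
as ideals of $R_{\rhob_x}$. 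Consequently $R_k/(p) = R_{\rhob_x}/(p,\alpha,\beta,\delta_k) = R_{\rhob_x}/(p,\alpha,\beta,\gamma)$, which is a finite $\FF$-algebra by Proposition \ref{finprop}.

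Next, since $R_k$ is a complete local Noetherian $W(\FF)$-algebra whose residue ring modulo $p$ is a finite $\FF$-algebra, the topological Nakayama lemma (applied to any set of lifts of a generating set of $R_k/(p)$ over $\FF$) yields that $R_k$ is a finite $W(\FF)$-algebra.

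For the dimension, on one hand finiteness over $W(\FF)$ gives $\dim R_k \leq \dim W(\FF) = 1$. On the other hand, Corollary \ref{dimcor} asserts $\dim R_{\rhob_x} = 4$, and $R_k$ is obtained from $R_{\rhob_x}$ by quotienting by the three elements $\alpha,\beta,\delta_k$, so Krull's principal ideal theorem (\cite[Theorem $10.2$]{E}) gives $\dim R_k \geq 4 - 3 = 1$. The real content lies entirely in Proposition \ref{finprop}; granted that, the argument here is essentially formal, with the only subtle point being the elementary observation $\chi_p^{k-1}(i_p) \equiv 1 \pmod p$ that lets one trade $\gamma$ for $\delta_k$ modulo $p$.
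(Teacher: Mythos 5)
Your proof is correct and follows essentially the same route as the paper's: reduce to Proposition~\ref{finprop} via the ideal equality $(p,\alpha,\beta,\gamma)=(p,\alpha,\beta,\delta_k)$, deduce finiteness over $W(\FF)$, and then squeeze the Krull dimension between $1$ (from finiteness, or in the paper from Krull's principal ideal theorem applied to $p$) and $1$ (from the generalized Krull theorem and Corollary~\ref{dimcor}). Your spelled-out justification that $\chi_p^{k-1}(i_p)-1\in pW(\FF)$, hence $\gamma\equiv\delta_k\pmod p$, is exactly what the paper leaves implicit, and the remaining steps match the paper's proof.
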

\begin{proof}
Let $k \geq 2$ be an integer. Note that we have $(p,\alpha,\beta,\gamma)=(p,\alpha,\beta,\delta_k)$.
So, by Proposition~\ref{finprop}, $R_k/(p)$ is a finite $\FF$-algebra. Hence, it follows that $R_k$ is a finite $W(\FF)$-algebra.
As $R_k/(p)$ has Krull dimension $0$, Theorem~\ref{krullthm} implies that the Krull dimension of $R_k$ is at most $1$.
As $R_{\rhob_x}/(\alpha,\beta,\delta_k) = R_k$, Theorem~\ref{krullthm}, along with Corollary~\ref{dimcor}, implies that the Krull dimension of $R_k$ is at least $1$. This proves the corollary.
\end{proof}


\begin{cor}
\label{mincor}
Let $k \geq 2$ be an integer. If $P$ is a minimal prime ideal of $R_{\rhob_x}$, then the Krull dimension of $R_{\rhob_x}/(P,\alpha,\beta,\delta_k)$ is $1$.
\end{cor}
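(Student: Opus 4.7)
The plan is to combine the two structural results already established, namely Corollary~\ref{dimcor} (which says $R_{\rhob_x}$ is a local complete intersection ring of Krull dimension $4$) and Corollary~\ref{fincor} (which says $R_{\rhob_x}/(\alpha,\beta,\delta_k)$ has Krull dimension $1$), with Krull's generalized principal ideal theorem. The inequality $\dim R_{\rhob_x}/(P,\alpha,\beta,\delta_k) \leq 1$ is immediate, since this ring is a quotient of $R_k = R_{\rhob_x}/(\alpha,\beta,\delta_k)$, which by Corollary~\ref{fincor} has Krull dimension $1$. The content of the corollary is therefore the reverse inequality.

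For the lower bound, I would first argue that $\dim(R_{\rhob_x}/P) = 4$ for every minimal prime $P$. This is where Corollary~\ref{dimcor} is used crucially: a local complete intersection ring is Cohen--Macaulay, hence unmixed, so it has no embedded associated primes and every minimal prime has dimension equal to $\dim R_{\rhob_x} = 4$. Once this is in hand, applying \cite[Theorem $10.2$]{E} to the three elements $\alpha, \beta, \delta_k$ in the quotient ring $R_{\rhob_x}/P$ gives
\[
\dim R_{\rhob_x}/(P,\alpha,\beta,\delta_k) \;\geq\; \dim R_{\rhob_x}/P - 3 \;=\; 4 - 3 \;=\; 1.
\]
Combining the two bounds yields the desired equality.

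There is no real obstacle here; the only point requiring care is the appeal to the Cohen--Macaulay (and hence equidimensional) property of $R_{\rhob_x}$, since for a general complete local Noetherian ring one could have minimal primes $P$ with $\dim R_{\rhob_x}/P < \dim R_{\rhob_x}$, which would make the lower bound $4-3=1$ fail. This is precisely why the complete intersection statement in Corollary~\ref{dimcor} (as opposed to merely the dimension statement) is needed as an input.
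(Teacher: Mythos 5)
Your proof is correct and follows essentially the same route as the paper: the upper bound comes from Corollary~\ref{fincor}, and the lower bound combines the local complete intersection (hence Cohen--Macaulay, hence equidimensional/unmixed) property from Corollary~\ref{dimcor} with Krull's generalized principal ideal theorem applied to the three elements $\alpha,\beta,\delta_k$ in $R_{\rhob_x}/P$. The paper cites \cite[Corollary~18.14]{E} for precisely the equidimensionality step you identify as the crux.
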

\begin{proof}
Since $R_{\rhob_x}/(P,\alpha,\beta,\delta_k)$ is a quotient of $R_k$, it follows from Corollary~\ref{fincor} that its Krull dimension at most $1$. 
From Corollary~\ref{dimcor}, we know that $R_{\rhob_x}$ is a local complete intersection ring of Krull dimension $4$. Hence, from \cite[Corollary $18.14$]{E}, we conclude that the Krull dimension of $R_{\rhob_x}/P$ is $4$.
So, Theorem~\ref{krullthm} implies that the Krull dimension of $R_{\rhob_x}/(P,\alpha,\beta,\delta_k)$ is at least $1$ which proves the corollary.
\end{proof}

\subsection{Modular primes}
\label{modsec}
In this section, we will prove that every irreducible component of $\spec(R_{\rhob_x})$ has a smooth modular prime.
We begin with introducing the notion of modular primes.
\begin{defi}
We call a prime ideal $Q$ of $R_{\rhob_x}$ a \emph{modular prime} if the following conditions are satisfied:
\begin{enumerate}
\item\label{defitem1} $R_{\rhob_x}/Q$ is isomorphic to a subring of $\overline{\QQ_p}$, 
\item If the map $R_{\rhob_x}/Q \to \overline{\QQ_p}$ from \eqref{defitem1} is denoted by $\phi$, then the semi-simplification of the Galois representation $\rho : G_{\QQ,Np} \to \GL_2(\overline{\QQ_p})$ obtained by composing $\rho^{\univ} \Mod{Q}$ with $\phi$ is isomorphic to the $p$-adic Galois representation attached to a classical modular eigenform.
\end{enumerate}
\end{defi}

We are now ready to prove the main result of this section.
This is the second key result from this section which will be crucially used in the proof of the main theorem.
\begin{prop}
\label{modprop}
Every minimal prime ideal $P$ of $R_{\rhob_x}$ is contained in a prime ideal $Q(P)$ of $R_{\rhob_x}$ such that
\begin{enumerate} 
\item $Q(P)$ is a modular prime,
 \item $P$ is the unique minimal prime of $R_{\rhob_x}$ contained in $Q(P)$,
\item $(R_{\rhob_x})_{Q(P)}$ is a regular local ring.
\end{enumerate}
\end{prop}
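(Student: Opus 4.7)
The plan is, for each minimal prime $P$ of $R_{\rhob_x}$, to produce a characteristic zero point $Q(P)$ of $\spec(R_{\rhob_x})$ containing $P$, whose associated Galois representation is classically modular and at which $(R_{\rhob_x})_{Q(P)}$ is regular (forcing $P$ to be the unique minimal prime inside it).

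First I would fix an integer $k \geq 2$ and form the quotient $R_{\rhob_x}/(P,\alpha,\beta,\delta_k)$. By Corollary~\ref{mincor} this ring has Krull dimension one, and as a quotient of the $W(\FF)$-finite ring $R_k$ (Corollary~\ref{fincor}) it is itself finite over $W(\FF)$. Hence it admits a minimal prime of residue characteristic zero; pulling back to $R_{\rhob_x}$ yields a prime $Q \supseteq P$ with $R_{\rhob_x}/Q \hookrightarrow \overline{\QQ_p}$. Write $\rho_Q : G_{\QQ,Np} \to \GL_2(\overline{\QQ_p})$ for the resulting lift of $\rhob_x$. The vanishing of $\alpha$ modulo $Q$ forces $\rho_Q|_{G_{\QQ_p}}$ to be upper triangular (since $\alpha$ generates the lower-left ideal $C_p$ of the GMA); the vanishing of $\beta$ forces the lower diagonal character of $\rho_Q|_{G_{\QQ_p}}$ to equal $\hat\chibar_2$ on $I_p$; and the vanishing of $\delta_k$ forces $\det(\rho_Q) = \epsilon_k \chi_p^{k-1}$ for a finite order character $\epsilon_k$. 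Since $\chibar|_{G_{\QQ_p}} \neq 1$, the two diagonal characters of $\rho_Q|_{G_{\QQ_p}}$ are residually distinct, so $\rho_Q$ is a nearly-ordinary lift of distinguished type.

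Next I would apply a modularity lifting theorem to conclude that the semisimplification of $\rho_Q$ is attached to a classical eigenform, so that $Q$ is a modular prime. When the hypotheses of \cite{SW1} are in force, Lemma~\ref{selmlem} factorises the relevant map through $R^{\text{Sel}}_{\rhob'_x}$, and \cite[Theorem~6.1]{SW1} identifies the latter with a localised weight two Hecke algebra, giving modularity together with strong structural control. Otherwise I would invoke the nearly-ordinary modularity lifting theorem \cite[Theorem~A]{SW2} applied directly to $\rho_Q$; its hypotheses are met thanks to the distinguishedness at $p$ and the form of the determinant described above.

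The main obstacle will be showing that $P$ is the \emph{unique} minimal prime contained in $Q$; equivalently, that $(R_{\rhob_x})_Q$ is a domain. For this it suffices to arrange $H^2(G_{\QQ,Np},\ad(\rho_Q))=0$, so that $(R_{\rhob_x})_Q$ is regular by the standard Mazur obstruction calculation. In the first case, this regularity comes for free from the Hecke algebra identification of \cite{SW1}, allowing the choice of $Q$ as a smooth point of $\spec R^{\text{Sel}}_{\rhob'_x}[1/p]$. In the second case, I would choose the weight $k$ to satisfy the conclusion of Proposition~\ref{unobsprop} and combine it with the local deformation computations of \cite{Ki} at $p$ and the unobstructedness results of \cite{We1} for modular Galois representations. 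The delicate part is matching the characteristic zero specialisation produced by the finiteness argument with either an unobstructed reducible lift of the form furnished by Proposition~\ref{unobsprop} (when $Q$ corresponds to an Eisenstein point) or with an unobstructed cuspidal modular point handled by \cite{We1} (when $Q$ corresponds to a cusp form); this coordination of the weight $k$, the ramification data away from $p$, and the choice of minimal prime in $R_{\rhob_x}/(P,\alpha,\beta,\delta_k)$ is the step where the different inputs really have to be lined up carefully.
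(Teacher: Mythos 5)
Your outline matches the paper's strategy: take $Q(P)$ minimal over $(P,\alpha,\beta,\delta_k)$, use Corollaries~\ref{fincor} and~\ref{mincor} to land in characteristic zero, read off nearly-ordinary conditions from the vanishing of $\alpha,\beta,\delta_k$, invoke Skinner--Wiles for modularity, and then establish that $(R_{\rhob_x})_{Q(P)}$ is regular so that it is a domain and hence contains a unique minimal prime. The two cases you describe correspond exactly to the paper's Case~1 and Case~2.

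Two things need sharpening. In Case~1, regularity of $(R_{\rhob_x})_{Q(P)}$ does not ``come for free'' from identifying $R^{\text{Sel}}_{\rhob'_x}$ with a Hecke algebra. What the identification actually buys is that $R^{\text{Sel}}_{\rhob'_x}$ is reduced of Krull dimension $1$, hence $(R_2)_{Q'(P)}$ is a field, hence the maximal ideal of $(R_{\rhob_x})_{Q(P)}$ is generated by the images of $\alpha,\beta,\delta_2$, i.e.\ by at most $3$ elements. You then need the fact that $R_{\rhob_x}$ is Cohen--Macaulay (being an LCI, by Corollary~\ref{dimcor}) together with the dimension formula for catenary local rings to conclude $\dim (R_{\rhob_x})_{Q(P)} = 4-1 = 3$; only then does ``$\leq 3$ generators'' translate to regularity. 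Without the Cohen--Macaulay counting the $3$-generator bound alone is not enough.

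The genuine gap is in Case~2. After passing (via \cite[Proposition~9.5]{Ki2}, which you should cite explicitly) from $\widehat{(R_{\rhob_x})_{Q(P)}}$ to the universal deformation ring of $\rho^{\univ}\bmod Q(P)$ over its residue field, the unobstructedness argument splits into three subcases which you do not separate: (i) $\rho^{\univ}\bmod Q(P)$ reducible, handled by Proposition~\ref{unobsprop}; (ii) $\rho^{\univ}\bmod Q(P)$ irreducible and $\chibar \neq \omega_p^m$ for all $m$, handled directly by \cite[Theorem~8.2]{Ki}; (iii) $\rho^{\univ}\bmod Q(P)$ irreducible and $\chibar = \omega_p^m$ for some $m$. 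It is this last subcase that requires the extra work you gesture at but do not supply: one must first show that the resulting newform $f$ has level $\Gamma_0(N')$ with $N'$ squarefree (via Carayol and Lemma~\ref{cyclem}), that $p\nmid N'$ (using $p$-ordinariness, the Atkin--Lehner bound, and Saito's compatibility), and then that $f$ is \emph{not CM} --- either trivially if $N'=1$, or because $\rho_f|_{G_{\QQ_\ell}}$ is special at every $\ell\mid N'$ when $N'>1$. Only after verifying non-CM can one invoke Weston's \cite[Theorem~1]{We1}, whose hypotheses exclude CM forms, to get $\dim H^1(G_{\QQ,Np},\ad(\rho_Q)) = 3$ and hence vanishing of $H^2$. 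Your proposal names \cite{We1} but never addresses its non-CM hypothesis, and this is the step most likely to fail if left unexamined.
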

\begin{proof}
Let $k \geq 2$ be an integer, $P$ be a minimal prime of $R_{\rhob_x}$ and $Q(P)$ be a prime of $R_{\rhob_x}$ which is minimal over the ideal $(P,\alpha,\beta,\delta_k)$. 
In other words, if $\pi_P : R_{\rhob_x} \to R_{\rhob_x}/(P,\alpha,\beta,\delta_k)$ is the natural surjective map, then there exists a minimal prime ideal $\bar{Q}$ of $R_{\rhob_x}/(P,\alpha,\beta,\delta_k)$ such that $\pi_P^{-1}(\bar{Q}) = Q(P)$.
We will first prove that $Q(P)$ is a modular prime.
By Corollary~\ref{mincor} and Corollary~\ref{fincor}, $R_{\rhob_x}/Q(P)$ is a domain of Krull dimension $1$ which is finite over $W(\FF)$. Hence, it is isomorphic to a subring of $\overline{\QQ_p}$. Fix such an isomorphism.
By abuse of notation, denote the representation $G_{\QQ,Np} \to \GL_2(\overline{\QQ_p})$ obtained from $\rho^{\univ}\Mod{Q(P)} : G_{\QQ,Np} \to \GL_2(R_{\rhob_x}/Q(P))$ by $\rho^{\univ}\Mod{Q(P)}$ as well.

Consider the representation $\rho^{\univ} \Mod{Q(P)} : G_{\QQ,Np} \to \GL_2(\overline{\QQ_p})$. If it is reducible, then its semi-simplification is $\chi_1 \oplus \chi_2$, where $\chi_1$ and $\chi_2$ are deformations of $\chibar_1$ and $\chibar_2$, respectively. 
By definition of $\beta$, we see that $\chi_2|_{I_p}$ has finite order and hence, $\chi_2$ has finite order.
By definition of $\delta_k$, $\chi_1=\chi'_1\chi_p^{k-1}$ for a character $\chi'_1$ of finite order.
Hence, by Lemma~\ref{eisenlem}, this semi-simplification is isomorphic to the $p$-adic Galois representation attached to a classical Eisenstein series of weight $k$ and level equal to the product of conductors of $\chi'_1$ and $\chi_2$. 

If it is irreducible, then, from the results of Skinner--Wiles (\cite{SW1} and \cite{SW2}), we conclude that it is isomorphic to the $p$-adic Galois representation attached to a classical cuspidal modular eigenform.
Indeed, from the definition of $\alpha$, $\beta$ and $\delta_k$, it follows that $\rho^{\univ} \Mod{Q(P)}|_{G_{\QQ_p}} \simeq \begin{pmatrix} \eta_1 & * \\ 0 & \eta_2\end{pmatrix}$, $\eta_2\hat\chibar_2^{-1}$ is unramified at $p$ (and hence, of finite order) and $\det(\rho^{\univ} \Mod{Q(P)}) = \eta\chi_p^{k-1}$ for some character $\eta$ of finite order.
Since $\dim(H^1(G_{\QQ,Np},\chibar)) =1$, it follows, from Lemma~\ref{dimonelem}, that if $\ell \mid N$, then either $\chibar$ is ramified at $\ell$ or $\chibar(\text{Frob}_{\ell}) \neq \ell$.
So if $\chibar_2$ is unramified at $p$, $\chibar$ is ramified at $p$ and the $\chibar$-eigenspace of the $p$-part of the class group of $F_{\chibar}$ is trivial, then \cite[Theorem $1.2$]{SW1} implies that $\rho^{\univ} \Mod{Q(P)}$ is modular.
Otherwise, we can use \cite[Theorem A]{SW2} to conclude the modularity of $\rho^{\univ} \Mod{Q(P)}$.
This completes the proof of our claim.

So for every minimal prime $P$ of $R_{\rhob_x}$, we have found infinitely many modular primes containing $P$.
Note that, for every minimal prime $P$, it suffices to find a modular prime $Q(P)$ such that $P \subset Q(P)$ and $(R_{\rhob_x})_{Q(P)}$ is a regular local ring.
Indeed, if such a prime exists, then, by \cite[Theorem $19.19$]{E}, $(R_{\rhob_x})_{Q(P)}$ is a domain. Thus, $(R_{\rhob_x})_{Q(P)}$ has a unique minimal prime. So, $Q(P)$ contains a unique minimal prime of $R_{\rhob_x}$. As we already know that $P \subset Q(P)$, this completes the proof of the proposition.

We will now do a case by case analysis to prove the existence of such modular primes for every minimal prime of $R_{\rhob_x}$.

{\bf Case $1$:} Suppose $\chibar$ is ramified at $p$ and the $\chibar$-eigenspace of the $p$-part of the class group of $F_{\chibar}$ is trivial.
Let $P$ be a minimal prime of $R_{\rhob_x}$.
Let $Q(P)$ be a prime of $R_{\rhob_x}$ which is minimal over the ideal $(P,\alpha, \beta, \delta_2) \subset R_{\rhob_x}$.
We know, from the analysis above, that $Q(P)$ is modular.
From Corollary~\ref{mincor}, Corollary~\ref{fincor} and definition of $R_2$, we see that the image of $Q(P)$ under the natural surjective map $R_{\rhob_x} \to R_2$ is a minimal prime of $R_2$. Let us denote this minimal prime of $R_2$ by $Q'(P)$.

From Lemma~\ref{selmlem} and the proof of Proposition~\ref{finprop}, we conclude that there exists a surjective map $\phi' : R^{\text{Sel}}_{\rhob'_x} \to R_2$.
Let $Q''(P) = \phi^{-1}(Q'(P))$. 
By \cite[Theorem $6.1$]{SW1}, $R^{\text{Sel}}_{\rhob'_x}$ is reduced and has Krull dimension $1$. So, it follows that $Q''(P)$ is a minimal prime ideal of $R^{\text{Sel}}_{\rhob'_x}$ and $(R^{\text{Sel}}_{\rhob'_x})_{Q''(P)}$ is a field. The map $\phi'$ induces a surjective morphism $(R^{\text{Sel}}_{\rhob'_x})_{Q''(P)} \to (R_2)_{Q'(P)}$. Hence, $(R_2)_{Q'(P)}$ is also a field. 

Now the natural surjective map $R_{\rhob_x} \to R_{\rhob_x}/(\alpha,\beta,\delta_2)=R_2$ induces a surjective map $(R_{\rhob_x})_{Q(P)} \to (R_2)_{Q'(P)}$. Moreover, the kernel of this surjective map is the ideal of $(R_{\rhob_x})_{Q(P)}$ generated by the images of $\alpha$, $\beta$ and $\delta_2$ in $(R_{\rhob_x})_{Q(P)}$. Since $(R_2)_{Q'(P)}$ is a field, it follows that the kernel of the surjective map $(R_{\rhob_x})_{Q(P)} \to (R_2)_{Q'(P)}$ is the maximal ideal of $(R_{\rhob_x})_{Q(P)}$. Thus the maximal ideal of $(R_{\rhob_x})_{Q(P)}$ is generated by at most $3$ elements.

Since $R_{\rhob_x}$ is a local complete intersection ring, it is a Cohen-Macaulay ring. By Corollary~\ref{dimcor}, $R_{\rhob_x}$ has Krull dimension $4$ and $R_{\rhob_x}/Q(P)$ has Krull dimension $1$. Hence, by \cite[Corollary $18.10$]{E}, $(R_{\rhob_x})_{Q(P)}$ has Krull dimension $3$. Therefore, $(R_{\rhob_x})_{Q(P)}$ is a regular local ring. 

{\bf Case $2$:} Suppose either $\chibar$ is unramified at $p$ or the $\chibar$-eigenspace of the $p$-part of the class group of $F_{\chibar}$ is non-trivial.
Choose an integer $k >2$ satisfying the properties of Proposition~\ref{unobsprop}.
Let $Q(P)$ be a prime of $R_{\rhob_x}$ which is minimal over the ideal $(P,\alpha, \beta, \delta_k) \subset R_{\rhob_x}$.
We know, from the analysis above, that $Q(P)$ is modular.
 
Let $E$ be the fraction field of $R_{\rhob_x}/Q(P)$. Note that it is a finite extension of $\QQ_p$.
By abuse of notation, denote the representation $G_{\QQ,Np} \to \GL_2(E)$ obtained from $\rho^{\univ}\Mod{Q(P)} : G_{\QQ,Np} \to \GL_2(R_{\rhob_x}/Q(P))$ by $\rho^{\univ}\Mod{Q(P)}$ as well.
We will now prove that $(R_{\rhob_x})_{Q(P)}$ is a regular local ring by using Proposition~\ref{kisinprop} and Theorem~\ref{unobsthm}.

Let $\widehat{(R_{\rhob_x})_{Q(P)}}$ be the completion of $(R_{\rhob_x})_{Q(P)}$ with respect to its maximal ideal.
Now we are choosing $\rho^{\univ}$ such that $\rho^{\univ}(g_0)$ is diagonal with distinct entries on the diagonal which are also distinct modulo $m_{R_{\rhob_x}}$.
Therefore, \cite[Lemma $2.4.5$]{Bel} implies that $R_{\rhob_x}/(Q(P))[\rho^{\univ}\Mod{Q(P)}(G_{\QQ,Np})]$ is a sub-$R_{\rhob_x}/(Q(P))$-GMA of $M_2(R_{\rhob_x}/(Q(P)))$ i.e. there exist ideals $B$ and $C$ of $R_{\rhob_x}/Q(P)$ such that $$R_{\rhob_x}/(Q(P))[\rho^{\univ}\Mod{Q(P)}(G_{\QQ,Np})] = \begin{pmatrix} R_{\rhob_x}/(Q(P)) & B\\ C & R_{\rhob_x}/(Q(P)) \end{pmatrix}.$$ As $\rho^{\univ}\Mod{Q(P)}$ is a lift of $\rhob_x$, it follows that $B$ is not zero.
One can easily verify from this description that $\rho^{\univ}\Mod{Q(P)}$ is either reducible or absolutely irreducible.
Note that if $\rho^{\univ}\Mod{Q(P)}$ is reducible, then it is a non-trivial extension of two distinct characters.
Hence, in both cases, Proposition~\ref{kisinprop} implies that $\widehat{(R_{\rhob_x})_{Q(P)}}$ is the universal deformation ring of $\rho^{\univ}\Mod{Q(P)}$.

If $\rho^{\univ}\Mod{Q(P)}$ is reducible, then Proposition~\ref{unobsprop} implies that $$\dim(H^1(G_{\QQ,Np},\text{Ad}(\rho^{\univ}\Mod{Q(P)}))) =3.$$
As $\rho^{\univ}\Mod{Q(P)}$ is odd, the global Euler characteristic formula (\cite[Lemma 9.7]{Ki2}) implies that $H^2(G_{\QQ,Np},\text{Ad}(\rho^{\univ}\Mod{Q(P)}))=0$.
Therefore, using Lemma~\ref{preslem}, we conclude that $\widehat{(R_{\rhob_x})_{Q(P)}}$ is a complete Noetherian regular local ring of Krull dimension $3$.
Recall that $Q(P)$ is a modular prime.
So, if $\rho^{\univ}\Mod{Q(P)}$ is irreducible and $\chibar \neq \omega_p^k$ for any integer $k$, then, by Theorem~\ref{unobsthm},  $\widehat{(R_{\rhob_x})_{Q(P)}} \simeq E\llbracket T_1,T_2,T_3\rrbracket$ i.e. it is a complete Noetherian regular local ring of Krull dimension $3$.

Suppose $\chibar=\omega_p^{k-1}$ for some even $k \in \ZZ$ with $k > 3$.
From the main theorem of \cite{SW2}, we know that $\rho^{\univ}\Mod{Q(P)} \otimes \hat\chibar_2^{-1}$ is modular. 
From Lemma~\ref{cyclem}, we see that if $\ell \mid N$, then $p \nmid \ell^2-1$. Hence, definition of $\delta_k$ implies that $\det(\rho^{\univ}\Mod{Q(P)} \otimes \hat\chibar_2^{-1}) = \chi_p^{k-1}$.
Let $f$ be the newform giving rise to $\rho^{\univ}\Mod{Q(P)} \otimes \hat\chibar_2^{-1}$.
Then the level of $f$ is $\Gamma_0(N')$ for some integer $N'$.

If $p \mid N'$, then \cite[Theorem $5$]{AL} implies that the $U_p$-eigenvalue of $f$ is either $\pm p^{(k-2)/2}$ or $0$.
Note that $\rho^{\univ}\Mod{Q(P)} \otimes \hat\chibar_2^{-1} = \rho_f$ is $p$-ordinary i.e. $\rho_f|_{G_{\QQ_p}} \simeq \begin{pmatrix} \eta_1 & *\\ 0 & \eta_2 \end{pmatrix}$, where $\eta_2$ is an unramified character of $G_{\QQ_p}$. So, it follows, from the work of Saito (\cite{S}), that $f$ is $p$-ordinary i.e. $U_p$-eigenvalue of $f$ is a $p$-adic unit (see \cite[Proposition $3.6$]{Mo}).
Since $f$ is $p$-ordinary and $k > 2$, we get that $p \nmid N'$.
Now $\rho_f$ lifts $1 \oplus \omega_p^{k-1}$ and by Proposition~\ref{condprop}, the tame Artin conductor of $\rho_f$ is $N'$.
Hence, by Proposition~\ref{carayolprop}, we get that $N'$ is either $1$ or a squarefree integer bigger than $1$.

If $N'=1$, then $f$ is not a CM modular form. Suppose $N'>1$. Since $N'$ is square-free and the level of $f$ is $\Gamma_0(N')$, it follows that $\rho_f|_{G_{\QQ_\ell}}$ is special for every $\ell \mid N$ (see \cite[Section $3.3$]{We2}). Hence, $f$ is not a CM modular form in this case as well. 
Hence, we conclude, using Theorem~\ref{unobsthm}, that
$$\dim(H^1(G_{\QQ,Np}, \text{Ad}(\rho^{\univ}\Mod{Q(P)} \otimes \hat\chibar_2^{-1}))) =3,$$
$$ H^2(G_{\QQ,Np}, \text{Ad}(\rho^{\univ}\Mod{Q(P)} \otimes \hat\chibar_2^{-1})) =0.$$

Note that $\text{Ad}(\rho^{\univ}\Mod{Q(P)} \otimes \hat\chibar_2^{-1}) = \text{Ad}(\rho^{\univ}\Mod{Q(P)})$.
Therefore, by Lemma~\ref{preslem}, we get that $\widehat{(R_{\rhob_x})_{Q(P)}} \simeq E \llbracket T_1,T_2,T_3 \rrbracket$ i.e. it is a complete Noetherian regular local ring of Krull dimension $3$. This implies that $(R_{\rhob_x})_{Q(P)}$ is a regular local ring. 
%
\end{proof}

As a corollary, we get:
\begin{cor}
\label{redcor}
$R_{\rhob_x}$ is reduced.
\end{cor}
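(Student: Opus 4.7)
The plan is to deduce reducedness from Serre's criterion: a Noetherian ring is reduced if and only if it satisfies $(R_0)$ (the localization at every height-zero prime is a field) and $(S_1)$ (every associated prime has height zero). Both conditions will follow directly from results already established; the corollary is essentially an assembly of Corollary~\ref{dimcor} and Proposition~\ref{modprop}.

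For $(S_1)$, I would invoke Corollary~\ref{dimcor}: $R_{\rhob_x}$ is a local complete intersection, hence Cohen--Macaulay, and Cohen--Macaulay rings satisfy $(S_k)$ for every $k \geq 1$. In particular they have no embedded associated primes, so every associated prime of $R_{\rhob_x}$ is minimal.

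For $(R_0)$, let $P$ be a minimal prime of $R_{\rhob_x}$. By Proposition~\ref{modprop}, there is a modular prime $Q(P) \supseteq P$ such that $(R_{\rhob_x})_{Q(P)}$ is a regular local ring. In particular $(R_{\rhob_x})_{Q(P)}$ is an integral domain, and since $P \subseteq Q(P)$ the localization $(R_{\rhob_x})_P$ arises as a further localization of $(R_{\rhob_x})_{Q(P)}$; consequently $(R_{\rhob_x})_P$ is also a domain. Because $P$ is minimal, $(R_{\rhob_x})_P$ is zero-dimensional, and a zero-dimensional local domain is a field, which gives $(R_0)$.

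Combining the two conditions and invoking Serre's criterion yields that $R_{\rhob_x}$ is reduced. The main difficulty here is not in the corollary itself but was already absorbed into Proposition~\ref{modprop} (producing a regular modular point through every irreducible component) and Corollary~\ref{dimcor} (the complete intersection structure giving Cohen--Macaulayness); once these are in hand, reducedness is essentially formal, and no further obstacle arises.
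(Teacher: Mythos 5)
Your argument is correct, and it uses exactly the two ingredients the paper uses (Corollary~\ref{dimcor} for Cohen--Macaulayness and Proposition~\ref{modprop} for the existence of a regular local $(R_{\rhob_x})_{Q(P)}$ lying over each minimal prime), but it packages them differently. You assemble the conclusion via Serre's criterion: from Proposition~\ref{modprop} and the fact that $(R_{\rhob_x})_P$ is a further localization of $(R_{\rhob_x})_{Q(P)}$, each $(R_{\rhob_x})_P$ ($P$ minimal) is a zero-dimensional local domain, hence a field, giving $(R_0)$; Cohen--Macaulayness gives $(S_1)$. The paper instead argues by contradiction: assuming a nonzero nilpotent $r$, it notes that $r$ dies in each $(R_{\rhob_x})_{Q(P)}$, so the annihilator ideal $J = \mathrm{Ann}(r)$ fails to lie in any minimal prime; prime avoidance then forces $J$ outside the union of minimal primes, while Cohen--Macaulayness (no embedded primes) forces $J$ inside that union, since $J$ consists of zero-divisors. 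Your Serre's-criterion packaging is the more standard and transparent route in commutative algebra and avoids the prime-avoidance step entirely; the paper's direct argument makes explicit where the nilpotent gets annihilated, which some readers may find more concrete. Either way, no gap.
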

\begin{proof}
Suppose $R_{\rhob_x}$ is not a reduced ring. Let $r$ be a non-zero nilpotent element of $R_{\rhob_x}$. Let $P$ be a minimal prime of $R_{\rhob_x}$ and $Q(P)$ be a modular prime of $R_{\rhob_x}$ containing $P$ which we found in Proposition~\ref{modprop}. So $P$ is the only minimal prime of $R_{\rhob_x}$ contained in $Q(P)$ and moreover, $(R_{\rhob_x})_{Q(P)}$ is a domain. 

Hence, the image of $r$ in $(R_{\rhob_x})_{Q(P)}$ is $0$. This means that there exists an element $a \in R_{\rhob_x} \setminus Q(P)$ such that $ar=0$. Thus, the annihilator ideal of $r$, which we will denote by $J$, is not contained in $Q(P)$. Therefore, $J \not\subset P$ for any minimal prime ideal $P$ of $R_{\rhob_x}$. Hence, by prime avoidance lemma (\cite[Lemma $3.3$]{E}), $J \not\subset \cup_{P \in S} P$ where $S$ is the set of minimal prime ideals of $R_{\rhob_x}$.

Since $R_{\rhob_x}$ is a Cohen-Macaulay ring, by \cite[Corollary $18.14$]{E}, all its associated primes are minimal primes. Hence, the set of zero divisors of $R_{\rhob_x}$ is just $\cup_{P \in S} P$ (see \cite[Theorem $3.1$]{E}). Since $J$ is the annihilator ideal of $r$, it consists of zero divisors of $R_{\rhob_x}$. This implies that  $J \subset \cup_{P \in S} P$ giving us a contradiction.
\end{proof}

Combining Corollary~\ref{redcor} with the results of \cite{D2}, we get the following corollary:
\begin{cor}
\label{isomcor}
If $p \nmid \phi(N)$ and $1 \leq \dim(H^1(G_{\QQ,Np},\chibar^{-1})) \leq 3$, then the map $R^{\ps}_{\rhob_0} \to R_{\rhob_x}$ induced by $(\tr(\rho^{\univ}), \det(\rho^{\univ}))$ is an isomorphism.
\end{cor}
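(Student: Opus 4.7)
The plan is to combine the reducedness of $R_{\rhob_x}$ established in Corollary~\ref{redcor} with a reducedness statement for $R^{\ps}_{\rhob_0}$ that is already available from \cite{D2} under the stated numerical hypotheses.

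First, recall that the map $F_x : R^{\ps}_{\rhob_0} \to R_{\rhob_x}$ induced by $(\tr(\rho^{\univ}),\det(\rho^{\univ}))$ is surjective (this uses $\dim(H^1(G_{\QQ,Np},\chibar))=1$ together with \cite[Corollary~1.4.4(2)]{Ki3}, as noted just before Proposition~\ref{nilprop}), and by Proposition~\ref{nilprop} its kernel is nilpotent. Hence $F_x$ factors through $(R^{\ps}_{\rhob_0})^{\red}$ and induces an isomorphism $(R^{\ps}_{\rhob_0})^{\red} \simeq (R_{\rhob_x})^{\red}$. By Corollary~\ref{redcor}, $(R_{\rhob_x})^{\red} = R_{\rhob_x}$, so we obtain an isomorphism $(R^{\ps}_{\rhob_0})^{\red} \simeq R_{\rhob_x}$.

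To promote this to an isomorphism $R^{\ps}_{\rhob_0} \simeq R_{\rhob_x}$, it therefore suffices to prove that $R^{\ps}_{\rhob_0}$ itself is reduced, i.e.\ that $\ker(F_x)=0$. This is where the two additional hypotheses enter: the assumption $p \nmid \phi(N)$ controls the ramification contributions at primes $\ell \mid N$ (so that no extra nilpotents can appear from local components), and the bound $1 \leq \dim(H^1(G_{\QQ,Np},\chibar^{-1})) \leq 3$ controls the ``other'' extension direction that is dual to $x$ in the GMA description of \S\ref{gmasec}. Under these hypotheses, the structure results on pseudo-deformation rings proved in \cite{D2} (the same circle of ideas giving \cite[Corollary~4.21]{D2} cited in the sketch) show that $R^{\ps}_{\rhob_0}$ is already reduced; this is the input we invoke.

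Combining the three ingredients, namely that (i)~$F_x$ is surjective, (ii)~$R_{\rhob_x}$ is reduced by Corollary~\ref{redcor}, and (iii)~$R^{\ps}_{\rhob_0}$ is reduced under the hypotheses $p \nmid \phi(N)$ and $1 \leq \dim(H^1(G_{\QQ,Np},\chibar^{-1})) \leq 3$, the nilpotent kernel from Proposition~\ref{nilprop} must vanish, and $F_x$ is an isomorphism. The main obstacle is step~(iii): one must verify that the cited reducedness statement from \cite{D2} is applicable in the generality needed here and that the two numerical hypotheses really do rule out the pathological cases in which $R^{\ps}_{\rhob_0}$ is known to fail to be reduced (as in \cite[Corollary~4.21]{D2}). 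Everything else is formal once the reducedness of both rings is in hand.
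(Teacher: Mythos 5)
There is a genuine gap at your step~(iii), and it is precisely the step you flag as the obstacle. You assert that the structure results of \cite{D2}, applicable because $p \nmid \phi(N)$ and $1 \leq \dim(H^1(G_{\QQ,Np},\chibar^{-1})) \leq 3$, show that $R^{\ps}_{\rhob_0}$ is \emph{reduced}. No such result exists in \cite{D2}; the reference you gesture at, \cite[Corollary $4.21$]{D2}, is an example where the pseudo-deformation ring is badly behaved (not a local complete intersection), not a reducedness theorem. More to the point, reducedness of $R^{\ps}_{\rhob_0}$ cannot be a consequence of the numerical hypotheses alone together with results of \cite{D2}: it also relies essentially on the new structural facts about $R_{\rhob_x}$ proved in the present paper (via the finiteness Proposition~\ref{finprop}, and therefore via the Skinner--Wiles and Pan theorems), which your proposal never invokes.

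The route the paper actually takes is different and fills exactly this hole. It cites \cite[Proposition $3.14$]{D2}, which, under the hypotheses $p \nmid \phi(N)$ and $1 \leq \dim(H^1(G_{\QQ,Np},\chibar^{-1})) \leq 3$, reduces the desired isomorphism to the concrete checkable condition that $p$ is not a zero divisor in $R_{\rhob_x}$. That condition is then verified using what this paper has established about $R_{\rhob_x}$: Corollary~\ref{dimcor} shows $R_{\rhob_x}$ is a local complete intersection, hence Cohen--Macaulay, so all associated primes are minimal primes (\cite[Corollary $18.14$]{E}); and Corollaries~\ref{fincor} and \ref{mincor} show that $p$ lies in no minimal prime, because $R_{\rhob_x}/(P,\alpha,\beta,\delta_k)$ is a finite $W(\FF)$-algebra of Krull dimension $1$ for each minimal prime $P$. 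Therefore $p$ avoids every associated prime and is a non-zero-divisor. Your opening reductions (surjectivity of $F_x$, nilpotent kernel, reducedness of $R_{\rhob_x}$) are correct and match the paper's logic, but they leave all the work to step~(iii), and step~(iii) as you state it is not a valid citation: you would need to supply the $p$-not-a-zero-divisor argument, which is the real content of the proof.
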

\begin{proof}
Since $p \nmid \phi(N)$ and $1 \leq \dim_{\FF}H^1(G_{\QQ,Np},\chibar^{-1}) \leq 3$, by \cite[Proposition $3.14$]{D2}, it suffices to check that $p$ is not a zero divisor in $R_{\rhob_x}$. By Corollary~\ref{dimcor}, $R_{\rhob_x}$ is a local complete intersection ring and hence, a Cohen-Macaulay ring. So by \cite[Corollary $18.14$]{E} all its associated primes are minimal primes. Let $P$ be a minimal prime of $R_{\rhob_x}$. By Corollary~\ref{fincor} and Corollary~\ref{mincor}, we see that $p \not\in P$. Therefore, $p$ is not contained in any associated prime of $R_{\rhob_x}$ which means $p$ is not a zero divisor in $R_{\rhob_x}$ (by \cite[Theorem $3.1$]{E}). 
\end{proof}

\section{Comparison between deformation rings and Hecke algebras}
\label{mainsec}
In this section, we will prove our main theorem by comparing the deformation ring $R_{\rhob_x}$ with a `big' $p$-adic Hecke algebra.
In \S\ref{heckesec}, we will first recall the definition of these Hecke algebras from \cite{BK} and \cite{D} and collect their relevant properties.
In \S\ref{proofsec}, we will prove the main theorem using results from \S\ref{defosec} and the infinite fern argument of Gouv\^{e}a--Mazur (\cite{GM}).
In \S\ref{modpsec}, we will give an application of our main theorem to the structure of Hecke algebras modulo $p$ (as defined in \cite{BK} and \cite{D}).

\subsection{Hecke algebras}
\label{heckesec}
In this section, we will define the $p$-adic Hecke algebras acting on the spaces of modular forms of fixed level but varying weights and collect its properties. 
We will mostly follow the exposition of \cite{BK} and \cite{D} in this subsection. We will be rather brief and we refer the reader to \cite[Section $1.2$]{BK} and \cite[Section $1$]{D} for more details. Assume throughout this section that $N_0 \mid N$ unless mentioned otherwise, where $N_0$ is the tame Artin conductor of $\rhob_0$.

For an integer $i \geq 0$, denote by $M_i(N, W(\FF))$ the space of classical modular forms of weight $i$ and level $N$ (i.e. level $\Gamma_1(N)$) with  Fourier coefficients in $W(\FF)$. Using the $q$-expansion principle, we will view it as a subspace of $W(\FF)[[q]]$.
For an integer $k \geq 0$, let $M_{\leq k}(N, W(\FF)) = \sum_{i=0}^{k} M_i(N, W(\FF)) \subset W(\FF)[[q]]$.
Note that this is a direct sum.

For a prime $\ell \nmid Np$, denote by $\langle \ell \rangle$ the diamond operator corresponding to $\ell$.
Let $\TT_{\leq k}(N)$ be the $W(\FF)$-subalgebra of $\text{End}_{W(\FF)}(M_{\leq k}(N, W(\FF)))$ generated by the Hecke operators $T_{\ell}$ and $S_{\ell}$ for primes $\ell \nmid Np$. Here $S_{\ell}$ is the operator whose action on $M_i(N, W(\FF))$ coincides with the action of the operator $\langle \ell \rangle \ell^{i-2}$.
We now define $\TT(N) := \varprojlim_k \TT_{\leq k}(N)$.

From Lemma~\ref{modlem}, we know that there exists an eigenform $f$ of level $N$ such that $\rho_f$ lifts $\rhob_0$.
Hence, $f$ gives a morphism $\phi_f : \TT(N) \to \mathcal{O}_f$ of local complete Noetherian $W(\FF)$-algebras.
Let $m_{\rhob_0}$ be the kernel of the map $\TT(N) \to \mathcal{O}_f/\varpi_f$ obtained by composing $\phi_f$ with the natural surjective map $\mathcal{O}_f \to \mathcal{O}_f/\varpi_f$.
 So $m_{\rhob_0}$ is a maximal ideal of $\TT(N)$. Let $\TT(N)_{\rhob_0}$ be the localization of $\TT(N)$ at $m_{\rhob_0}$.
Note that $\TT(N)_{\rhob_0}$ is a complete Noetherian local ring with residue field $\FF$. Denote its maximal ideal by $\mathfrak{m}$.

By gluing pseudo-representations corresponding to $p$-adic Galois representations of classical modular eigenforms of level $N$ lifting $\rhob_0$, we get the following lemma:

\begin{lem}
\label{heckelem}
There exists a pseudo-representation $(\tau_N,\delta_N) : G_{\QQ,Np} \to \TT(N)_{\rhob_0}$ such that $\tau_N \Mod{\mathfrak{m}} = \tr(\rhob_0)$, $\delta_N \Mod{\mathfrak{m}} = \det(\rhob_0)$ and for all primes $\ell \nmid Np$, $\tau_N(\frob_{\ell})=T_{\ell}$ and $\delta_N(\frob_{\ell})=\ell S_{\ell}$.
\end{lem}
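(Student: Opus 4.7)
\medskip

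\noindent\textbf{Proof sketch.} The plan is to construct $(\tau_N,\delta_N)$ level by level in the weight and then pass to the inverse limit that defines $\TT(N)_{\rhob_0}$. The fundamental mechanism is Chebotarev density combined with the fact that $\TT_{\leq k}(N)$ is reduced and finite over $W(\FF)$, so that its pseudo-representation is forced by the pseudo-representations attached to the individual classical eigenforms.

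First, for each $k \geq 0$, the $W(\FF)$-algebra $\TT_{\leq k}(N)$ is a subalgebra of $\operatorname{End}_{W(\FF)}(M_{\leq k}(N,W(\FF)))$, hence a finite $W(\FF)$-algebra. After inverting $p$ and extending scalars to a sufficiently large finite extension $E/\QQ_p$ with ring of integers $\mathcal{O}_E$, the space $M_{\leq k}(N,\mathcal{O}_E)$ admits a basis of Hecke eigenforms (away from $Np$), so $\TT_{\leq k}(N) \otimes_{W(\FF)} \mathcal{O}_E$ embeds into a finite product $\prod_f \mathcal{O}_E$ indexed by the normalised eigenforms $f$ of weight $\leq k$ and level $N$, via $T \mapsto (\phi_f(T))_f$. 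In particular $\TT_{\leq k}(N)$ is reduced, and localising at $m_{\rhob_0}$ (taking only those $f$ such that $\rho_f$ lifts $\rhob_0$, i.e.\ whose system of Hecke eigenvalues is congruent mod $\varpi_f$ to that of our chosen $f$ from Lemma~\ref{modlem}) shows that $\TT_{\leq k}(N)_{\rhob_0}$ embeds into $\prod_f \mathcal{O}_f$ over such $f$.

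Next, for each such $f$ the $p$-adic Galois representation $\rho_f : G_{\QQ,Np} \to \GL_2(\mathcal{O}_f)$ satisfies $\tr(\rho_f(\frob_\ell)) = \phi_f(T_\ell)$ and $\det(\rho_f(\frob_\ell)) = \phi_f(\ell S_\ell)$ for all $\ell \nmid Np$ (since for $f$ of weight $i$ with nebentype $\chi_f$ one has $\det\rho_f(\frob_\ell) = \chi_f(\ell)\ell^{i-1} = \ell\cdot\langle\ell\rangle\ell^{i-2}$, which is precisely $\ell S_\ell$ applied to $f$). Combining these componentwise gives a continuous pseudo-representation
\[
  (t_k,d_k) = \prod_f (\tr\rho_f,\det\rho_f) \colon G_{\QQ,Np} \longrightarrow \prod_f \mathcal{O}_f,
\]
and by Chebotarev density its image lies in the closure of the subring generated by $\{(\tr(\rho_f(\frob_\ell)))_f,(\det(\rho_f(\frob_\ell)))_f : \ell\nmid Np\} = \{(\phi_f(T_\ell))_f,(\phi_f(\ell S_\ell))_f\}$, which by the embedding above is contained in (the image of) $\TT_{\leq k}(N)_{\rhob_0}$. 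Thus $(t_k,d_k)$ factors through $\TT_{\leq k}(N)_{\rhob_0}$ and sends $\frob_\ell$ to $(T_\ell,\ell S_\ell)$.

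The pseudo-representations $(t_k,d_k)$ are compatible under the natural surjections $\TT_{\leq k+1}(N)_{\rhob_0} \twoheadrightarrow \TT_{\leq k}(N)_{\rhob_0}$ (the restrictions match on Frobenii by construction, and hence everywhere by Chebotarev). Passing to the inverse limit, which exists in the category of continuous pseudo-representations since each $\TT_{\leq k}(N)_{\rhob_0}$ is finite over $W(\FF)$ and $\TT(N)_{\rhob_0} = \varprojlim_k \TT_{\leq k}(N)_{\rhob_0}$ with the inverse-limit topology, yields the desired continuous $(\tau_N,\delta_N) : G_{\QQ,Np} \to \TT(N)_{\rhob_0}$ with $\tau_N(\frob_\ell) = T_\ell$ and $\delta_N(\frob_\ell) = \ell S_\ell$. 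The reduction modulo $\mathfrak{m}$ is forced by Chebotarev and by the fact that $\rho_f \bmod \varpi_f$ has the same trace and determinant as $\rhob_0$ for every $f$ in the localisation.

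The only real obstacle is the injectivity of $\TT_{\leq k}(N)_{\rhob_0} \hookrightarrow \prod_f \mathcal{O}_f$, which is where the reducedness of $\TT_{\leq k}(N)$ and the semisimplicity of the Hecke action on the weight $\leq k$ space after base change are used; once that is in hand, the rest is a routine gluing/inverse limit argument.
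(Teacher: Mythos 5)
Your proposal is correct and follows exactly the approach the paper delegates to its references: the paper's proof consists only of the citation ``See \cite[Lemma 4]{D} and \cite[Proposition 2]{BK},'' introduced by the remark that the lemma is obtained ``by gluing pseudo-representations corresponding to $p$-adic Galois representations of classical modular eigenforms of level $N$ lifting $\rhob_0$.'' Your argument --- embedding $\TT_{\leq k}(N)_{\rhob_0}$ into a product of $\mathcal{O}_f$'s via reducedness, combining the $(\tr\rho_f,\det\rho_f)$ componentwise, using Chebotarev to land in (and pin down) the Hecke algebra, and passing to the inverse limit over $k$ --- is precisely the construction those references carry out.
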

\begin{proof}
See \cite[Lemma $4$]{D} and \cite[Proposition $2$]{BK}.
\end{proof}

Let $(T^{\univ}, D^{\univ}) : G_{\QQ,Np} \to R^{\ps}_{\rhob_0}$ be the universal pseudo-deformation.
As an immediate consequence of Lemma~\ref{heckelem}, we get:
\begin{lem}
\label{surjlem}
There exists a surjective map $\Phi : R^{\ps}_{\rhob_0} \to \TT(N)_{\rhob_0}$ such that $\Phi(T^{\univ}(\frob_{\ell})) = T_{\ell}$ and $\Phi(D^{\univ}(\frob_{\ell})) = \ell S_{\ell}$  for all primes $\ell \nmid Np$.
\end{lem}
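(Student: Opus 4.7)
The plan is to simply invoke the universal property of $R^{\ps}_{\rhob_0}$ applied to the pseudo-representation constructed in Lemma~\ref{heckelem}, and then verify surjectivity by a generation argument.

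First, by Lemma~\ref{heckelem} we have a pseudo-representation $(\tau_N,\delta_N) : G_{\QQ,Np} \to \TT(N)_{\rhob_0}$ with $\tau_N \Mod{\mathfrak{m}} = \tr(\rhob_0)$ and $\delta_N \Mod{\mathfrak{m}} = \det(\rhob_0)$. Since $\TT(N)_{\rhob_0}$ is an object of $\mathcal{C}$, the defining universal property of $R^{\ps}_{\rhob_0}$ as the representing object for pseudo-deformations of $(\tr(\rhob_0),\det(\rhob_0))$ (Definition~\ref{defodef}) furnishes a unique morphism $\Phi : R^{\ps}_{\rhob_0} \to \TT(N)_{\rhob_0}$ in $\mathcal{C}$ such that $\Phi \circ T^{\univ} = \tau_N$ and $\Phi \circ D^{\univ} = \delta_N$. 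Evaluating at $\frob_{\ell}$ for $\ell \nmid Np$ and using the explicit formulas from Lemma~\ref{heckelem} gives the asserted relations $\Phi(T^{\univ}(\frob_{\ell})) = T_{\ell}$ and $\Phi(D^{\univ}(\frob_{\ell})) = \ell S_{\ell}$.

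For surjectivity, I would argue as follows. By construction, $\TT(N)_{\rhob_0}$ is topologically generated over $W(\FF)$ by the Hecke operators $T_{\ell}$ and $S_{\ell}$ for primes $\ell \nmid Np$. Since $\ell \neq p$, the integer $\ell$ is a unit in $W(\FF)$, hence a unit in $\TT(N)_{\rhob_0}$; therefore $S_{\ell} = \ell^{-1} \cdot (\ell S_{\ell})$ lies in the image of $\Phi$. Thus the image of $\Phi$ contains a topologically generating set for $\TT(N)_{\rhob_0}$. Since $\Phi$ is a continuous morphism of complete local Noetherian $W(\FF)$-algebras and $R^{\ps}_{\rhob_0}$ is compact, the image is closed, and so $\Phi$ is surjective.

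There is no real obstacle here: the statement is a formal consequence of the universal property combined with the prior construction of $(\tau_N,\delta_N)$. The only small point requiring attention is confirming that the generators $\ell S_{\ell}$ in the image of $\Phi$ suffice to recover the generators $S_{\ell}$ of $\TT(N)_{\rhob_0}$, which is automatic because $\ell$ is invertible away from $p$.
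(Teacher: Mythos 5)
Your proposal is correct and matches the paper's intent: the paper states this lemma as an immediate consequence of Lemma~\ref{heckelem} without spelling out a proof, and what it has in mind is exactly what you wrote — the universal property of $R^{\ps}_{\rhob_0}$ applied to $(\tau_N,\delta_N)$ gives $\Phi$, and surjectivity follows because $T_\ell$ and $S_\ell = \ell^{-1}(\ell S_\ell)$ (with $\ell$ a unit since $\ell \neq p$) topologically generate $\TT(N)_{\rhob_0}$ over $W(\FF)$ while the image of $\Phi$ is a closed subalgebra.
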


\begin{lem}
\label{factorlem}
The surjective map $\Phi : R^{\ps}_{\rhob_0} \to \TT(N)_{\rhob_0}$ factors through $R_{\rhob_x}$ yielding a surjective map $\Psi : R_{\rhob_x} \to \TT(N)_{\rhob_0}$.
\end{lem}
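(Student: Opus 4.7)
The plan is to deduce the factorization formally from Proposition~\ref{nilprop} together with the reducedness of $\TT(N)_{\rhob_0}$; the genuinely hard work has already been done in the preceding results.

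First I would recall, as noted just before Proposition~\ref{nilprop}, that the map $F_x : R^{\ps}_{\rhob_0} \to R_{\rhob_x}$ induced by $(\tr(\rho^{\univ}),\det(\rho^{\univ}))$ is surjective (this uses $\dim_{\FF}(H^1(G_{\QQ,Np},\chibar))=1$ via \cite[Corollary~$1.4.4(2)$]{Ki3}), and that by Proposition~\ref{nilprop} its kernel is nilpotent. Hence $F_x$ identifies $R_{\rhob_x}$ with $R^{\ps}_{\rhob_0}/\ker(F_x)$, and the existence of $\Psi$ reduces to the containment $\ker(F_x)\subset\ker(\Phi)$.

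Second, this containment is immediate from reducedness: $\TT(N)_{\rhob_0}$ is a reduced complete local Noetherian ring (recalled in \S\ref{heckesec}), so every nilpotent element of $R^{\ps}_{\rhob_0}$ lies in $\ker(\Phi)$; in particular $\Phi(\ker(F_x))=0$. Consequently $\Phi$ descends uniquely to a ring homomorphism $\Psi:R_{\rhob_x}\to\TT(N)_{\rhob_0}$ with $\Psi\circ F_x=\Phi$, and $\Psi$ is surjective because $\Phi=\Psi\circ F_x$ is (Lemma~\ref{surjlem}). The only step with any content is Proposition~\ref{nilprop}, i.e.\ \cite[Theorem~$3.12$]{D2}, whose proof is where the dimension hypothesis is really consumed; there is no further obstacle here.

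An alternative, more constructive proof (more in line with the remark in the introduction) would directly build a lift $\rho:G_{\QQ,Np}\to\GL_2(\TT(N)_{\rhob_0})$ of $\rhob_x$ with $\tr(\rho)=\tau_N$ and $\det(\rho)=\delta_N$ using the Bella\"{i}che--Chenevier generalized matrix algebra attached to $(\tau_N,\delta_N)$, using $g_0\in G_{\QQ_p}$ (as in point~\eqref{item2} of Definition~\ref{defodef}) to split the residual characters via orthogonal idempotents. Under our hypothesis $\dim_{\FF}(H^1(G_{\QQ,Np},\chibar))=1$, the upper-right ideal of the resulting GMA is principal, so after choosing a generator the GMA embeds into $M_2(\TT(N)_{\rhob_0})$ and produces the desired $\rho$; the universal property of $R_{\rhob_x}$ then furnishes $\Psi$, and the compatibility $\Psi\circ F_x=\Phi$ follows by matching the traces and determinants of $\frob_\ell$ for $\ell\nmid Np$ and invoking \v{C}ebotarev density.
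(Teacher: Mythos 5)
Your primary argument --- surjectivity of $F_x$, nilpotence of $\ker(F_x)$ from Proposition~\ref{nilprop}, and reducedness of $\TT(N)_{\rhob_0}$ --- is precisely the paper's first (one-sentence) proof of the lemma, and it is complete and correct. The constructive GMA alternative you sketch is the route the paper then develops at length ``for the convenience of the reader,'' but as you have written it there is a genuine gap: knowing that the upper-right fractional ideal $B$ of the GMA attached to $(\tau_N,\delta_N)$ is generated by a single element $\alpha$ as a $\TT(N)_{\rhob_0}$-module does not by itself let you conjugate the GMA into $M_2(\TT(N)_{\rhob_0})$ --- for that you need $\alpha$ to be a unit in the total fraction ring $K(N)_{\rhob_0}$, i.e.\ a non-zero-divisor. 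The paper supplies this step by showing that if $\alpha$ were a zero divisor then $BC=\alpha C$ would lie in some minimal prime $P$, forcing $\tau_N \bmod P$ to be a sum of two characters and hence bounding the Krull dimension of $\TT(N)_{\rhob_0}/P$ by $3$ (via the tensor product of one-dimensional character deformation rings), which contradicts the lower bound of $4$ on every component of $\mathrm{Spec}(\TT(N)_{\rhob_0})$ coming from the Gouv\^{e}a--Mazur infinite fern. If you want the constructive proof to stand on its own, that dimension argument is indispensable.
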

\begin{proof}
Since $\TT(N)_{\rhob_0}$ is reduced, the lemma follows from Proposition~\ref{nilprop}.
But for the convenience of the reader, we will also give a slightly different proof of the lemma by constructing a deformation of $\rhob_x$ taking values in $\TT(N)_{\rhob_0}$.

Let $K(N)_{\rhob_0}$ be the total fraction field of $\TT(N)_{\rhob_0}$.
By combining \cite[Proposition $2.4.2$]{Bel} and \cite[Proposition $1.3.12$]{BC}, it follows that there exist fractional ideals $B$ and $C$ of $K(N)_{\rhob_0}$ and a representation $\rho : G_{\QQ,Np} \to \GL_2(K(N)_{\rhob_0})$ such that 
\begin{enumerate}
\item $\tr(\rho) = \tau_N$ and $\det(\rho) = \delta_N$,
\item $\TT(N)_{\rhob_0}[\rho(G_{\QQ,Np})] = \begin{pmatrix} \TT(N)_{\rhob_0} & B \\ C & \TT(N)_{\rhob_0} \end{pmatrix} = \Bigg\{\begin{pmatrix} a & b \\ c & d \end{pmatrix} \in M_2(K(N)_{\rhob_0}) \mid a,d \in \TT(N)_{\rhob_0}, b \in B, c \in C\Bigg\}$,
\item $BC \subset \mathfrak{m}$,
\item If $\rho(g) = \begin{pmatrix} a_g & b_g \\ c_g & d_g \end{pmatrix}$, then $a_g \pmod{\mathfrak{m}} = \chibar_1(g)$ and $d_g \pmod{\mathfrak{m}} = \chibar_2(g)$.
\end{enumerate}
Since $\dim(H^1(G_{\QQ,Np},\chibar))=1$, it follows, from the proof of \cite[Theorem $1.5.5$]{BC}, that $B$ is generated by at most $1$ element as a $\TT(N)_{\rhob_0}$-module (see also \cite[Lemma $2.5$]{D2}). Let $\alpha$ be a generator of $B$ as a $\TT(N)_{\rhob_0}$-module (we take $\alpha=0$ if $B=0$).

As $BC \subset \TT(N)_{\rhob_0}$, it follows that if $\alpha \not\in K(N)_{\rhob_0}^{\times}$, then there exists a minimal prime $P$ of $\TT(N)_{\rhob_0}$ such that $\alpha.C = BC \subset P$.
Then, from the description of $\TT(N)_{\rhob_0}[\rho(G_{\QQ,Np})]$, it follows that $a \pmod{P} : G_{\QQ,Np} \to (\TT(N)_{\rhob_0}/P)^{\times}$ sending $g \mapsto a_g \pmod{P}$ is a character deforming $\chibar_1$ and $d \pmod{P} : G_{\QQ,Np} \to (\TT(N)_{\rhob_0}/P)^{\times}$ sending $g \mapsto d_g \pmod{P}$ is a character deforming $\chibar_2$.

Therefore, if $R_{\chibar_i}$ is the universal deformation ring in $\mathcal{C}$ of the character $\chibar_i : G_{\QQ,Np} \to \FF^{\times}$ for $i=1,2$, then the characters $a \pmod{P}$ and $d \pmod{P}$ induce maps $f_1 : R_{\chibar_1} \to \TT(N)_{\rhob_0}/P$ and $f_2 : R_{\chibar_2} \to \TT(N)_{\rhob_0}/P$, respectively.
Hence, combining these maps gives us a map $f : R_{\chibar_1} \hat\otimes_{W(\FF)} R_{\chibar_2} \to \TT(N)_{\rhob_0}/P$. 

Note that $a_g \pmod{P} \in \text{Im}(f_1)$ and $d_g \pmod{P} \in \text{Im}(f_2)$ for all $g \in G_{\QQ,Np}$.
Hence, $a_g \pmod{P} +d_g \pmod{P} =\tau_N(g) \pmod{P} \in \text{Im}(f)$ and $a_g \pmod{P}.d_g\pmod{P} =\delta_N(g) \pmod{P} \in \text{Im}(f)$ for all $g \in G_{\QQ,Np}$.
Therefore, by Lemma~\ref{heckelem}, the map $f$ is surjective.

Suppose the pro-$p$ part of the abelianization of $G_{\QQ,Np}$ is $\mathbb{Z}_p \times \prod_{i=1}^{r}\mathbb{Z}/p^{e_i}\mathbb{Z}$.
Then for $i=1,2$, $R_{\chibar_i} \simeq W(\FF)[[T,X_1,\cdots,X_{r}]]/I$, where $I$ is the ideal generated by the set $\{ (1+X_i)^{p^{e_i}}-1 \mid 1 \leq i \leq r \}$.
So, we see that the Krull dimension of $R_{\chibar_1} \hat\otimes_{W(\FF)} R_{\chibar_2}$ is $3$ which implies that the Krull dimension of $\TT(N)_{\rhob_0}/P$ is at most $3$.
However, by Gouv\^{e}a--Mazur infinite fern argument, we know that every component of $\text{Spec}(\TT(N)_{\rhob_0})$ has Krull dimension at least $4$ (see \cite[Corollary $2.28$]{Em} and \cite[Theorem $1$]{GM}).
Therefore, we get a contradiction which implies that $\alpha \in K(N)_{\rhob_0}^{\times}$.

As $B=\alpha\TT(N)_{\rhob_0}$ and $\alpha.C \subset \TT(N)_{\rhob_0}$, it follows, from the properties of $\rho$ listed above, that if $M= \begin{pmatrix} \alpha^{-1} & 0 \\ 0 & \alpha \end{pmatrix}$, then conjugating $\rho$ by $M$ gives us a representation $\rho'=M\rho M^{-1} : G_{\QQ,Np} \to \GL_2(\TT(N)_{\rhob_0})$ such that 
\begin{enumerate}
\item $\tr(\rho') = \tau_N$ and $\det(\rho') = \delta_N$,
\item $\rho' \pmod{\mathfrak{m}} = \begin{pmatrix} \chibar_1 & * \\ 0 & \chibar_2\end{pmatrix}$, where $*$ is non-zero.
\end{enumerate}
Since $\dim(H^1(G_{\QQ,Np},\chibar))=1$, we can conjugate $\rho'$ further by a diagonal matrix in $\GL_2(\TT(N)_{\rhob_0})$ if necessary to get a deformation $\rho'' : G_{\QQ,Np} \to \GL_2(\TT(N)_{\rhob_0})$ of $\rhob_x$ such that $\tr(\rho'') = \tau_N$ and $\det(\rho'') = \delta_N$.

Hence, $\rho''$ induces a map $\Psi : R_{\rhob_x} \to \TT(N)_{\rhob_0}$. 
Now $$\Psi \circ F_x \circ (T^{\univ}, D^{\univ}) = \Psi \circ (\tr(\rho^{\univ}),\det(\rho^{\univ})) = (\tr(\rho''),\det(\rho'')) = (\tau_N, \delta_N).$$
Therefore, the universal property of $R^{\ps}_{\rhob_0}$ implies that $\Psi \circ F_x = \Phi$. This proves the lemma.
\end{proof}

\subsection{Main theorem and its proof}
\label{proofsec}
We are now ready to state and prove our main theorem.
\begin{thm}
\label{reducecor}
Suppose $N$ satisfies the conditions of Corollary~\ref{condlem}. Then the map $\Phi : R^{\ps}_{\rhob_0} \to \TT(N)_{\rhob_0}$ induces an isomorphism $(R^{\ps}_{\rhob_0})^{\red} \simeq \TT(N)_{\rhob_0}$.
\end{thm}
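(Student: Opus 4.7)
The plan is to reduce the theorem to showing that the map $\Psi : R_{\rhob_x} \to \TT(N)_{\rhob_0}$ constructed in Lemma~\ref{factorlem} is an isomorphism, and then to combine reducedness of $R_{\rhob_x}$, the existence of smooth modular points in every component of $\spec(R_{\rhob_x})$, and the Gouv\^ea--Mazur infinite fern to conclude injectivity. First, Proposition~\ref{nilprop} asserts that $F_x : R^{\ps}_{\rhob_0} \to R_{\rhob_x}$ is surjective with nilpotent kernel and Corollary~\ref{redcor} asserts that $R_{\rhob_x}$ is reduced, so $F_x$ descends to an isomorphism $(R^{\ps}_{\rhob_0})^{\red} \simeq R_{\rhob_x}$. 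Since $\Phi = \Psi \circ F_x$, it remains to show that the surjection $\Psi$ is injective.

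Set $J := \ker \Psi$. Since $R_{\rhob_x}$ is reduced, the intersection of the minimal primes of $R_{\rhob_x}$ is zero, so it suffices to prove $J \subset P$ for every minimal prime $P$. The first key observation is that for every modular prime $Q$ of $R_{\rhob_x}$, the associated eigenform $f$ lifts $\rhob_0$ and $\rho_f$ is unramified outside $Np$, so the prime divisors of the tame level of the newform underlying $f$ all divide $N$. As $N$ satisfies the conditions of Lemma~\ref{condlem}, this newform has level dividing $N$ and hence gives a point of $\spec(\TT(N)_{\rhob_0})$. Comparing traces on $\frob_{\ell}$ for primes $\ell \nmid Np$, the map $R_{\rhob_x} \to \mathcal{O}_f$ induced by reduction modulo $Q$ agrees with the composition $R_{\rhob_x} \xrightarrow{\Psi} \TT(N)_{\rhob_0} \to \mathcal{O}_f$; hence $J \subset Q$ for every such modular prime $Q$.

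To upgrade this from $J \subset Q$ to $J \subset P$, I would fix a minimal prime $P$ and invoke Proposition~\ref{modprop} to obtain a modular prime $Q(P) \supset P$ such that $(R_{\rhob_x})_{Q(P)}$ is regular of Krull dimension $3$ (using Corollary~\ref{dimcor}). At this smooth modular point I would run the Gouv\^ea--Mazur infinite fern construction, using Coleman families in both weight and a twisting parameter through $Q(P)$, to produce a Zariski dense set of classical modular points inside the unique irreducible component $V(P) \subset \spec(R_{\rhob_x})$ passing through $Q(P)$ (compare \cite[Corollary $2.28$]{Em}, \cite[Theorem $1$]{GM}, and the arguments of \cite{Bo}). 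Each such modular point $Q$ satisfies $J \subset Q$ by the previous paragraph, so Zariski density forces $J \subset P$. Applying this to every minimal prime yields $J = 0$, so $\Psi$ is an isomorphism and therefore $(R^{\ps}_{\rhob_0})^{\red} \simeq \TT(N)_{\rhob_0}$.

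The main obstacle I anticipate is carrying out the infinite fern density step cleanly in the residually reducible setting, and in particular on Eisenstein components: one must verify that Coleman families actually pass through a given smooth modular point of $\spec(R_{\rhob_x})$, that the classical specializations produced stay on the same irreducible component, and that they are Zariski dense in it. A secondary technical point is that every specialization produced by the fern must correspond to a newform of tame level dividing $N$, which is precisely where the full strength of the hypotheses packaged into Lemma~\ref{condlem} is used to control levels.
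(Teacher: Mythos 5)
Your reduction of the theorem to showing that $\Psi : R_{\rhob_x} \to \TT(N)_{\rhob_0}$ is an isomorphism, via Proposition~\ref{nilprop} and Corollary~\ref{redcor}, is exactly what the paper does. You also correctly frame the injectivity of $\Psi$ as showing $\ker(\Psi) \subset P$ for every minimal prime $P$, using reducedness of $R_{\rhob_x}$. The divergence, and the gap, lies in how you propose to prove this last containment.

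You propose to run the Gouv\^ea--Mazur infinite fern directly in $\spec(R_{\rhob_x})$ through the smooth modular point $Q(P)$ and deduce that modular primes are Zariski dense in the component $V(P)$, then conclude $\ker(\Psi) \subset P$ from density. This is essentially asking for a residually reducible analogue of B\"ockle's density theorem, and as you yourself note, none of the following is established in the present setting: that Coleman families pass through the given point of $\spec(R_{\rhob_x})$, that the resulting classical specializations land in $V(P)$, or that they are Zariski dense in $V(P)$. The paper never runs the fern in the deformation space at all. Instead it argues by a dimension comparison that uses the fern only on the Hecke side, where the result is already a theorem (\cite[Corollary $2.28$]{Em}, \cite[Theorem $1$]{GM}): one fixes a minimal prime $P'$ of $\TT(N)_{\rhob_0}$ contained in $Q'(P) := \phi_P^{-1}(0)$ (the Hecke prime corresponding to $f_P$), sets $J := \Psi^{-1}(P')$, and observes that $R_{\rhob_x}/J \simeq \TT(N)_{\rhob_0}/P'$ has Krull dimension at least $4$ by the Hecke-side fern. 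Since $R_{\rhob_x}$ has Krull dimension exactly $4$ (Corollary~\ref{dimcor}), $J$ must be a minimal prime, and since $J \subset Q(P)$ and $P$ is the unique minimal prime contained in $Q(P)$ (Proposition~\ref{modprop}), one gets $J = P$, whence $\ker(\Psi) \subset J = P$. This replaces the open-ended density claim in $\spec(R_{\rhob_x})$ with straightforward Krull dimension bookkeeping, and it is the step you are missing. You should also note that the identity $Q(P) = \Psi^{-1}(Q'(P))$ you use implicitly is verified in the paper by matching traces on Frobenius elements via Chebotarev and the surjectivity of $R^{\ps}_{\rhob_0} \to R_{\rhob_x}$, which is worth writing out.
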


To prove Theorem~\ref{reducecor}, we first prove the following theorem.
\begin{thm}
\label{mainthm}
Suppose $N$ satisfies the conditions of Corollary~\ref{condlem}. Then the surjective map $\Psi : R_{\rhob_x} \to \TT(N)_{\rhob_0}$ is an isomorphism.
\end{thm}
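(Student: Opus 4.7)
The plan is to show $\ker(\Psi) = 0$ by combining Proposition~\ref{modprop} with the Gouv\^{e}a--Mazur infinite fern argument, and then invoking the reducedness of $R_{\rhob_x}$ from Corollary~\ref{redcor}. Since $\Psi$ is already surjective by Lemma~\ref{factorlem}, this will suffice to conclude.

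Let $P$ be any minimal prime of $R_{\rhob_x}$. By Proposition~\ref{modprop}, there exists a modular prime $Q(P) \supset P$ such that $P$ is the unique minimal prime of $R_{\rhob_x}$ below $Q(P)$ and $(R_{\rhob_x})_{Q(P)}$ is regular (of Krull dimension $3$, by the Cohen--Macaulay property of $R_{\rhob_x}$ coming from Corollary~\ref{dimcor}). Unpacking the construction of $Q(P)$ in the proof of Proposition~\ref{modprop}, the associated classical eigenform $f$ is either a weight-$k$ Eisenstein series or a cuspidal newform whose tame level $N'$ has the property that every prime dividing $N'$ also divides $N$. Lemma~\ref{condlem} then gives $N' \mid N$, so $f$ contributes a $W(\FF)$-algebra morphism $\TT(N)_{\rhob_0} \to \overline{\QQ_p}$ whose kernel pulls back to $Q(P)$ along $\Psi$; in particular $\ker(\Psi) \subset Q(P)$.

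Next I would apply the infinite fern argument of Gouv\^{e}a--Mazur, in the form used by Emerton (\cite[Corollary $2.28$]{Em}, \cite[Theorem $1$]{GM}), at the smooth modular point $Q(P)$. The Coleman family through $f$, together with its branches at classical specializations, sweeps out a Zariski-closed subset of $\spec(R_{\rhob_x})$ of dimension at least $4$ passing through $Q(P)$, and classical modular points of tame level $N' \mid N$ are Zariski dense in this subset. Since $P$ is the unique minimal prime below $Q(P)$, and $\spec(R_{\rhob_x}/P)$ is irreducible of dimension $4$ (again by Corollary~\ref{dimcor}), these classical modular points---all of which lie in the image of $\spec(\TT(N)_{\rhob_0})$---are Zariski dense in the irreducible component $V(P) \subset \spec(R_{\rhob_x})$.

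Letting $P$ vary over the minimal primes of $R_{\rhob_x}$, the modular primes pulled back from $\TT(N)_{\rhob_0}$ form a Zariski dense subset of every irreducible component of $\spec(R_{\rhob_x})$. Since $\ker(\Psi)$ is contained in each such prime, it is contained in the intersection of a Zariski dense family of primes, hence in every minimal prime of $R_{\rhob_x}$, hence in its nilradical. By Corollary~\ref{redcor} this nilradical is zero, so $\ker(\Psi) = 0$ and $\Psi$ is an isomorphism. The step I expect to be most delicate is the precise calibration of the infinite fern: one must verify that, starting from the smooth classical point $Q(P)$, the Coleman family and its branches really do produce classical modular points of tame level dividing $N$ that Zariski-fill-out a neighborhood of $Q(P)$ in $V(P)$. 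This relies on preservation of the tame level along Coleman families, on the equality $\dim(R_{\rhob_x}/P) = \dim(R_{\rhob_x}) = 4$ from Corollary~\ref{dimcor}, and on the smoothness established in Proposition~\ref{modprop}.
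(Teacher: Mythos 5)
Your strategy matches the paper's exactly in its three pillars: Proposition~\ref{modprop} gives a smooth modular prime $Q(P)$ over each minimal prime $P$ with $(R_{\rhob_x})_{Q(P)}$ regular; the Gouv\^{e}a--Mazur dimension bound forces $\ker(\Psi)$ into every minimal prime; and Corollary~\ref{redcor} then gives injectivity. The conclusion you reach is correct.

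The difference is in how you deploy the infinite fern, and this is where your sketch has a real soft spot that the paper sidesteps. You run the fern ``directly on $\spec(R_{\rhob_x})$'' and try to argue that modular points are Zariski dense in the component $V(P)$, flagging yourself that the calibration is delicate. The paper never runs the fern on the deformation side. Instead, it uses the fern purely as a statement about $\TT(N)_{\rhob_0}$ --- namely that every minimal prime $P'$ of $\TT(N)_{\rhob_0}$ satisfies $\dim(\TT(N)_{\rhob_0}/P') \geq 4$ --- and then transports this bound to $R_{\rhob_x}$ algebraically: since $\Psi$ is surjective, $J := \ker(R_{\rhob_x} \to \TT(N)_{\rhob_0}/P')$ is a prime with $R_{\rhob_x}/J \simeq \TT(N)_{\rhob_0}/P'$, hence $\dim(R_{\rhob_x}/J) \geq 4 = \dim(R_{\rhob_x})$, so $J$ is a minimal prime of $R_{\rhob_x}$ lying in $Q(P)$ and therefore $J = P$, giving $\ker(\Psi) \subset P$ immediately. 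This is cleaner than a density argument on $\spec(R_{\rhob_x})$ and avoids worrying about whether the fern ``fills out a neighborhood of $Q(P)$ in $V(P)$.''

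Two technical points you glossed over that the paper has to supply. First, the claim that $f$ gives a $W(\FF)$-algebra morphism $\phi_P : \TT(N)_{\rhob_0} \to \overline{\QQ_p}$ is not automatic from $N' \mid N$: the form $f_P$ may have $p$-power level, and one needs to know it is a $p$-adic modular form of tame level $N$, which the paper gets from Hatada's theorem (\cite[Theorem 1]{H}) and Gouv\^{e}a's description of $\TT(N)$ as the Hecke algebra acting on the space of $p$-adic forms (\cite[Proposition I.3.9]{G}). Second, the identification $Q(P) = \Psi^{-1}(\ker(\phi_P))$ requires matching the two Galois-side maps $R_{\rhob_x} \to \overline{\QQ_p}$ --- the one through $Q(P)$ and the one through $\phi_P \circ \Psi$ --- which the paper does by comparing traces of Frobenius at primes $\ell \nmid Np$ and invoking Chebotarev together with the surjectivity of $R^{\ps}_{\rhob_0} \to R_{\rhob_x}$. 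Your argument asserts this pullback equality without comment; it needs this Chebotarev step.
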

\begin{proof}
Let $P$ be a minimal prime of $R_{\rhob_x}$ and let $Q(P)$ be a modular prime corresponding to $P$ as found in Proposition~\ref{modprop}. So $P$ is the unique minimal prime of $R_{\rhob_x}$ contained in $Q(P)$. Let $f_P$ be the classical modular eigenform corresponding to $Q(P)$. From Corollary~\ref{condlem}, it follows that the tame level of $f_P$ is $N$. Note that, from our construction of $Q(P)$, we immediately see that $p^2$ does not divide the conductor of the nebentypus $\epsilon_{f_P}$ of $f_P$. Hence, \cite[Theorem $1$]{H} implies that $f_P$ is a $p$-adic modular form of tame level $N$. Let $k_p$ be the weight of $f_P$.

By \cite[Proposition I.$3.9$]{G}, $\TT(N)$ is the Hecke algebra acting faithfully on the space of $p$-adic modular forms of level $N$. Combining this with the fact that $\rho_{f_P}$ lifts $\rhob_0$, we get a map $\phi_P : \TT(N)_{\rhob_0} \to \overline{\QQ_p}$ sending $T_{\ell}$ to $a_{\ell}(f_P)$, the $T_{\ell}$ eigenvalue of $f_P$, and $S_{\ell}$ to $\epsilon_{f_P}(\ell)\ell^{k_P-2}$ for every prime $\ell \nmid Np$. Let $Q'(P)$ be the kernel of this map. 

Note that there exists an injective map $\iota : R_{\rhob_x}/Q(P) \to \overline{\QQ_p}$ such that composing $\iota$ with $\rho^{\univ} \Mod{Q(P)}$ gives us $\rho_{f_P} : G_{\QQ,Np} \to \GL_2(\overline{\QQ_p})$.
Hence, if $F_P : R_{\rhob_x} \to \overline{\QQ_p}$ is the map obtained by composing the natural surjective map $R_{\rhob_x} \to R_{\rhob_x}/Q(P)$ with $\iota$, then Lemma~\ref{surjlem} and Lemma~\ref{factorlem}, along with the previous paragraph, imply that $$(\tr(F_P \circ \rho^{\univ}(\frob_{\ell})), \det(F_P \circ \rho^{\univ}(\frob_{\ell}))) = (\tr(\phi_P \circ \Psi \circ \rho^{\univ}(\frob_{\ell})), \det(\phi_P \circ \Psi \circ \rho^{\univ}(\frob_{\ell})))$$ for all primes $\ell \nmid Np$. We have already noted that in our setting, the map $R^{\ps}_{\rhob_0} \to R_{\rhob_x}$ induced by $(\tr(\rho^{\univ}),\det(\rho^{\univ}))$ is surjective.
Combining this with the Chebotarev density theorem, we conclude that $F_P = \phi_P \circ \Psi$.
As $\ker(F_P) = Q(P)$, $\Psi$ is surjective and $\ker(\phi_P) = Q'(P)$, we get that $Q(P) = \Psi^{-1}(Q'(P))$.

Let $P'$ be a minimal prime of $\TT(N)_{\rhob_0}$ contained in $Q'(P)$. Let $J$ be the kernel of the map $R_{\rhob_x} \to \TT(N)_{\rhob_0}/P'$ obtained by composing $\Psi$ with the natural surjective map $\TT(N)_{\rhob_0} \to \TT(N)_{\rhob_0}/P'$. So $J$ is a prime ideal of $R_{\rhob_x}$ which is contained in $Q(P)$. 

Now, by Gouv\^{e}a--Mazur infinite fern argument (\cite[Corollary $2.28$]{Em} and \cite[Theorem $1$]{GM}), we know that the Krull dimension of $\TT(N)_{\rhob_0}/P'$ is at least $4$. From Corollary~\ref{dimcor}, we know that $R_{\rhob_x}$ has Krull dimension $4$. Therefore, $J$ is a minimal prime of $R_{\rhob_x}$ contained in $Q(P)$ and hence, by Proposition~\ref{modprop}, $J=P$. As $R_{\rhob_x}/P \simeq \TT(N)_{\rhob_0}/P'$, we see that $\ker(\Psi) \subset P$. Thus, we see that $\ker(\Psi)$ is contained in every minimal prime of $R_{\rhob_x}$. By Corollary~\ref{redcor}, $R_{\rhob_x}$ is reduced and hence, $\Psi$ is injective. Since we already know that $\Psi$ is surjective from Lemma~\ref{factorlem}, this gives the theorem.
\end{proof}

We are now ready to prove the main theorem.
\begin{proof}[Proof of Theorem~\ref{reducecor}]
Recall that composition of the morphism $R^{\ps}_{\rhob_0} \to R_{\rhob_x}$ with $\Psi$ is $\Phi$.
The theorem follows immediately from Theorem~\ref{mainthm} and Proposition~\ref{nilprop}.
\end{proof}

We now get the following results as immediate corollaries:
\begin{cor}
If $p \nmid \phi(N)$, then the map $\Phi : R^{\ps}_{\rhob_0} \to \TT(N)_{\rhob_0}$ induces an isomorphism $(R^{\ps}_{\rhob_0})^{\red} \simeq \TT(N)_{\rhob_0}$.
\end{cor}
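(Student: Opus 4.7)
The plan is to reduce this corollary to Theorem~\ref{reducecor} by checking that the hypothesis $p \nmid \phi(N)$, combined with the standing setup, forces all conditions of Lemma~\ref{condlem} to hold. The standing setup already supplies $N_0 \mid N$ (implicit in the very definition of $\TT(N)_{\rhob_0}$) and $\dim_{\FF}(H^1(G_{\QQ,Np},\chibar)) = 1$, so what is left is to verify the two local conditions (1) and (2) of Lemma~\ref{condlem}.

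First I would observe that both conditions (1) and (2) of Lemma~\ref{condlem} are phrased as constraints only on those primes $\ell \mid N$ which satisfy $p \mid \ell - 1$. The crucial arithmetic step is then the trivial remark that no such prime exists under our hypothesis: for any prime $\ell \mid N$, we have $\ell - 1 \mid \phi(N)$, and hence $p \nmid \phi(N)$ forces $p \nmid \ell - 1$. Consequently both conditions (1) and (2) hold vacuously.

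With these conditions established, one concludes for $N \neq 1$ by quoting Theorem~\ref{reducecor} verbatim, which gives the isomorphism $(R^{\ps}_{\rhob_0})^{\red} \simeq \TT(N)_{\rhob_0}$. For the edge case $N = 1$ (forcing $N_0 = 1$, so that $\rhob_0$ is unramified outside $p$), the hypothesis $N \neq 1$ in Lemma~\ref{condlem} is used only to eliminate problematic primes of $N$, which is already vacuous here because $\phi(1) = 1$; thus the proof of Theorem~\ref{reducecor} transfers without modification.

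I do not anticipate any real obstacle: this is essentially a bookkeeping corollary whose content is the observation that the technical prime-by-prime hypotheses of Lemma~\ref{condlem} become automatic once the global divisibility condition $p \nmid \phi(N)$ is imposed. The only thing worth a sentence of explicit care is the $N=1$ case, and even that is handled by noting that Theorem~\ref{reducecor}'s proof uses $N \neq 1$ exclusively through a statement that is already trivially true in our setting.
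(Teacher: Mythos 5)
Your reduction to Theorem~\ref{reducecor} by observing that $p \nmid \phi(N)$ makes conditions (1) and (2) of Lemma~\ref{condlem} vacuous is exactly the paper's (one-line) argument. Your treatment of the $N=1$ edge case reaches the right conclusion, though the precise reason one may drop the $N\neq 1$ hypothesis of Lemma~\ref{condlem} is not that it ``eliminates problematic primes'' but that its sole use there is to deduce $\chibar\neq\omega_p$, a fact already forced by the standing assumption $\dim_{\FF}H^1(G_{\QQ,Np},\chibar)=1$ via Lemma~\ref{dimonelem} (which gives $\chibar|_{G_{\QQ_p}}\neq\omega_p|_{G_{\QQ_p}}$); combined with the triviality of the conclusion $N'\mid 1$, this makes Theorem~\ref{reducecor} apply when $N=1$ as well.
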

\begin{proof}
If $p \nmid \phi(N)$, then $N$ satisfies the conditions of Corollary~\ref{condlem}. Hence, the corollary follows from Theorem~\ref{reducecor}.
\end{proof}

\begin{cor}
\label{isomcorr}
If $p \nmid \phi(N)$ and $1 \leq \dim_{\FF}H^1(G_{\QQ,Np},\chibar^{-1}) \leq 3$, then the map $\Phi : R^{\ps}_{\rhob_0} \to \TT(N)_{\rhob_0}$ is an isomorphism.
\end{cor}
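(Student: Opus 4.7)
The plan is to factor $\Phi$ through $R_{\rhob_x}$ and show that each factor is an isomorphism under the stated hypotheses. Recall from Lemma~\ref{factorlem} that $\Phi = \Psi \circ F_x$, where $F_x : R^{\ps}_{\rhob_0} \to R_{\rhob_x}$ is the map induced by $(\tr(\rho^{\univ}), \det(\rho^{\univ}))$ and $\Psi : R_{\rhob_x} \to \TT(N)_{\rhob_0}$ is the surjection constructed there.

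First I would observe that the hypothesis $p \nmid \phi(N)$ implies, by a direct numerical check, that $N$ satisfies the hypotheses of Lemma~\ref{condlem}: if $\ell \mid N$ and $p \mid \ell-1$, then $p \mid \phi(N)$, so no such $\ell$ exists and the two conditions of Lemma~\ref{condlem} are vacuous. Hence Theorem~\ref{mainthm} applies, giving that $\Psi : R_{\rhob_x} \to \TT(N)_{\rhob_0}$ is an isomorphism. This is the harder of the two inputs but it is already established.

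Second, the additional hypothesis $1 \leq \dim_{\FF} H^1(G_{\QQ,Np}, \chibar^{-1}) \leq 3$, together with $p \nmid \phi(N)$, is precisely the input of Corollary~\ref{isomcor}, which asserts that $F_x : R^{\ps}_{\rhob_0} \to R_{\rhob_x}$ is an isomorphism. No further work is needed here, since Corollary~\ref{isomcor} was proved by reducing to showing $p$ is a nonzerodivisor in $R_{\rhob_x}$ via the Cohen--Macaulay property established in Corollary~\ref{dimcor} and the bound on the kernel of $F_x$ from \cite[Proposition~3.14]{D2}.

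Combining the two steps, $\Phi = \Psi \circ F_x$ is a composition of isomorphisms and hence is itself an isomorphism, completing the proof. Since $\TT(N)_{\rhob_0}$ is a local complete intersection ring of Krull dimension $4$ by Theorem~\ref{reducecor} and Corollary~\ref{dimcor}, $R^{\ps}_{\rhob_0}$ inherits the same structure through $\Phi$. There is no real obstacle in this argument; the substantive content has already been carried out in Theorem~\ref{mainthm} and Corollary~\ref{isomcor}, and this corollary is simply the assembly of those two results.
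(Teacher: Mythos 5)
Your proof is correct and follows the same route as the paper: factor $\Phi$ as $\Psi \circ F_x$, invoke Theorem~\ref{mainthm} for $\Psi$ (after noting $p \nmid \phi(N)$ makes the conditions of Lemma~\ref{condlem} vacuous) and Corollary~\ref{isomcor} for $F_x$. The only stray detail is the final sentence's citation: the local complete intersection structure on $\TT(N)_{\rhob_0}$ comes from Corollary~\ref{dimcor} applied to $R_{\rhob_x}$ combined with the isomorphism of Theorem~\ref{mainthm}, not from Theorem~\ref{reducecor}, but that sentence is not needed for the corollary as stated.
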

\begin{proof}
 As $N$ satisfies the conditions of Corollary~\ref{condlem}, the corollary follows from Theorem~\ref{mainthm} and Corollary~\ref{isomcor}.
\end{proof}

\subsection{A consequence for Hecke algebras modulo $p$}
\label{modpsec}
We now give an application of Theorem~\ref{mainthm} to the structure of mod $p$ Hecke algebras. 
For an integer $i$, let $S_i(N,W(\FF))$ be the space of cusp forms of level $N$ and weight $i$ with Fourier coefficients in $W(\FF)$. 
If we replace $M_i(N,W(\FF))$ with $S_i(N,W(\FF))$ everywhere in the definition of $\TT(N)$, we get the cuspidal $p$-adic Hecke algebra which we denote by $\TT'(N)$ (see \cite[Section $1.2$]{BK} and \cite[Section $1$]{D} for more details).
The restriction of the action of Hecke operators from the space of modular forms to the space of cusp forms gives a surjective map $j : \TT(N) \to \TT'(N)$.

As we are assuming $p$ to be odd, the pseudo-representation $(\tau_N, \delta_N)$ is determined by $\tau_N$ and $\tau_N : G_{\QQ,Np} \to \TT(N)_{\rhob_0}$ is a pseudo-character of dimension $2$ in the sense of Rouquier (\cite{R}) (see \cite[Section $1.4$]{BK} for more details).

Let $I_0$ be the reducibility ideal of $\tau_N$ in the sense of \cite{BC}. So it is the smallest ideal of $\TT(N)_{\rhob_0}$ modulo which $\tau_N$ is sum of two characters lifting $\chibar_1$ and $\chibar_2$. As $\chibar_1 \neq \chibar_2$, such an $I_0$ does exist (see \cite[Section $1.5$]{BC} for the existence and properties of the reducibility ideal). 

\begin{lem}
\label{reddimlem}
The Krull dimension of $\TT(N)_{\rhob_0}/I_0$ is at most $3$.
\end{lem}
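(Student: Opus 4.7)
The plan is to mimic the dimension-counting argument that appeared in the proof of Lemma~\ref{factorlem}. By definition of the reducibility ideal, modulo $I_0$ the pseudo-character $\tau_N$ splits as a sum of two characters $\psi_1,\psi_2 : G_{\QQ,Np} \to (\TT(N)_{\rhob_0}/I_0)^{\times}$ lifting $\chibar_1$ and $\chibar_2$ respectively, and $\delta_N \equiv \psi_1\psi_2 \pmod{I_0}$. The first step is therefore to record this and note that each $\psi_i$ corresponds to a morphism $f_i : R_{\chibar_i} \to \TT(N)_{\rhob_0}/I_0$, where $R_{\chibar_i}$ denotes the universal deformation ring in $\mathcal{C}$ of the character $\chibar_i$.

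Combining $f_1$ and $f_2$ produces a continuous local $W(\FF)$-algebra homomorphism
\[
f : R_{\chibar_1} \hat\otimes_{W(\FF)} R_{\chibar_2} \longrightarrow \TT(N)_{\rhob_0}/I_0.
\]
The second step is to show $f$ is surjective. This uses Lemma~\ref{heckelem}, which tells us that $\TT(N)_{\rhob_0}$ is topologically generated as a $W(\FF)$-algebra by the traces $\tau_N(\frob_\ell)$ for primes $\ell \nmid Np$ together with the values $\delta_N(\frob_\ell)$. Modulo $I_0$, we have $\tau_N(g) = \psi_1(g)+\psi_2(g) \in \mathrm{Im}(f)$ and $\delta_N(g) = \psi_1(g)\psi_2(g) \in \mathrm{Im}(f)$ for every $g \in G_{\QQ,Np}$, so the image is dense and hence (by completeness and finite generation) equal to all of $\TT(N)_{\rhob_0}/I_0$.

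The final step is to bound the Krull dimension of the source. As recalled inside the proof of Lemma~\ref{factorlem}, if the pro-$p$ part of the abelianization of $G_{\QQ,Np}$ is $\ZZ_p \times \prod_{i=1}^{r} \ZZ/p^{e_i}\ZZ$, then each $R_{\chibar_i}$ is isomorphic to $W(\FF)[[T,X_1,\dots,X_r]]/((1+X_i)^{p^{e_i}}-1)$, and in particular has Krull dimension $2$. Consequently $R_{\chibar_1} \hat\otimes_{W(\FF)} R_{\chibar_2}$ has Krull dimension $3$. Since $f$ is surjective, $\TT(N)_{\rhob_0}/I_0$ has Krull dimension at most $3$, which proves the lemma.

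No step looks hard; the only point requiring a bit of care is the verification that traces and determinants of $\tau_N$ modulo $I_0$ really do lie in the image of $f$, which is immediate from the very definition of the reducibility ideal together with Lemma~\ref{heckelem}.
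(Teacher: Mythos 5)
Your proposal is correct and is essentially the argument the paper gives: the paper also factors the (surjective) map from the pseudo-deformation side through $R_{\chibar_1} \hat\otimes_{W(\FF)} R_{\chibar_2}$ using the splitting of $\tau_N$ modulo $I_0$, invokes surjectivity via the generation of $\TT(N)_{\rhob_0}$ by the Hecke operators (Lemma~\ref{surjlem}/Lemma~\ref{heckelem}), and uses the Krull dimension $3$ computation for $R_{\chibar_1} \hat\otimes_{W(\FF)} R_{\chibar_2}$ from the proof of Lemma~\ref{factorlem}. You have merely written out in detail the steps the paper cites by reference, so no gap remains.
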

\begin{proof}
This is a standard result. We will give a brief sketch of its proof for the convenience of the reader.
 Since $\tau_N \Mod{I_0}$ is a sum of two characters lifting $\chibar_1$ and $\chibar_2$, the map $R^{\ps}_{\rhob_0} \to \TT(N)_{\rhob_0}/I_0$ factors through $R_{\chibar_1} \hat\otimes_{W(\FF)} R_{\chibar_2}$ where $R_{\chibar_i}$ is the universal deformation ring in $\mathcal{C}$ of the character $\chibar_i : G_{\QQ,Np} \to \FF^{\times}$ for $i=1,2$ (see proof of Lemma~\ref{factorlem}). 
By Lemma~\ref{surjlem} this map is surjective.
We already saw in the proof of Lemma~\ref{factorlem} that the Krull dimension of $R_{\chibar_1} \hat\otimes_{W(\FF)} R_{\chibar_2}$ is $3$ which implies the lemma.
\end{proof}


\begin{lem}
\label{cuspheckelem}
\begin{enumerate}
\item\label{cusp1} The image of the maximal ideal $m_{\rhob_0}$ under the surjective map $j : \TT(N) \to \TT'(N)$ is a maximal ideal $m'_{\rhob_0}$ of $\TT'(N)$.
\item\label{cusp2} The natural surjective map $j : \TT(N) \to \TT'(N)$ induces an isomorphism between $\TT(N)_{\rhob_0}$ and $\TT'(N)_{\rhob_0}$ where $\TT'(N)_{\rhob_0}$ is the localization of $\TT'(N)$ at $m'_{\rhob_0}$.
\end{enumerate}
\end{lem}
\begin{proof}
We begin with the first part of the lemma. Suppose the image of $m_{\rhob_0}$ under $j$ is not a maximal ideal of $\TT'(N)$. This implies that $\ker(j) \not\subset m_{\rhob_0}$. This means that there does not exist a cuspidal eigenform $f$ of level $N$ such that $\rho_f$ lifts $\rhob_0$.

Let $E$ be an Eisenstein series of level $N$ such that $\rho_E$ lifts $\rhob_0$. Denote by $P_E$ the corresponding prime ideal of $\TT(N)_{\rhob_0}$. So $P_E$ is the kernel of the map $\TT(N)_{\rhob_0} \to \overline{\QQ_p}$ which sends $T_{\ell}$ to $T_{\ell}$-eigenvalue of $E$ for $\ell \nmid Np$. Let $I = \cap P_E$ where the intersection is taken over all Eisenstein series of level $N$ such that $\rho_E$ lifts $\rhob_0$. From the definition of $\TT(N)_{\rhob_0}$ and the non-existence of a cuspidal eigenform $f$ of level $N$ such that $\rho_f$ lifts $\rhob_0$, it follows that $I=0$.

By \cite[Proposition $1.5.1$]{BC}, we know that $I_0 \subset P_E$ for all Eisenstein series $E$ of level $N$ such that $\rho_E$ lifts $\rhob_0$. Hence, it follows that $I_0 =0$. Therefore, Lemma~\ref{reddimlem} implies that Krull dimension of $\TT(N)_{\rhob_0}$ is at most $3$. But by Gouv\^{e}a--Mazur infinite fern argument, we know that the Krull dimension of $\TT(N)_{\rhob_0}$ is at least $4$ (see \cite[Corollary $2.28$]{Em} and \cite[Theorem $1$]{GM}). Hence, we get a contradiction. This proves the first part of the lemma.

 Let $J$ be the kernel of the map $ \TT(N)_{\rhob_0} \to \TT'(N)_{\rhob_0}$ induced by $j$. From definition of $\TT(N)_{\rhob_0}$ and $\TT'(N)_{\rhob_0}$ it follows that $IJ=0$.
By Gouv\^{e}a--Mazur infinite fern argument, we know that every component of $\text{Spec}(\TT(N)_{\rhob_0})$ has Krull dimension at least $4$ (see \cite[Corollary $2.28$]{Em} and \cite[Theorem $1$]{GM}).
So, from the proof of the first part of the lemma, we conclude that $I$ is not contained in any of the minimal primes of $\TT(N)_{\rhob_0}$. So $J$ is contained in every minimal prime ideal of $\TT(N)_{\rhob_0}$. Since $\TT(N)_{\rhob_0}$ is reduced, we get that $J =0$. This proves the second part of the lemma.
\end{proof} 

\begin{rem}
Note that Part~\eqref{cusp1} of Lemma~\ref{cuspheckelem} implies that there exists a cuspidal eigenform $f$ of tame level $N$ lifting $\rhob_0$.
On the other hand, Part~\eqref{cusp2} of Lemma~\ref{cuspheckelem} implies that the subspace generated by such cusp forms is `big' inside the space generated by all modular eigenforms of tame level $N$ lifting $\rhob_0$.
\end{rem}

We will now recall the definition of mod $p$ Hecke algebras from \cite{BK} and \cite{D}.
Let $S_{\leq k}(N, W(\FF)) := \sum_{i=0}^{k}S_i(N,W(\FF))$.
Using $q$-expansion principle, we view $S_{\leq k}(N,W(\FF))$ as a submodule of $W(\FF)[[q]]$. Let $S_{\leq k}(N,\FF)$ be the image of $S_{\leq k}(N, W(\FF))$ under the natural surjective map $W(\FF)[[q]] \to \FF[[q]]$.
The action of Hecke operators $T_{\ell}$ and $S_{\ell}$ on $S_{\leq k}(N,W(\FF))$ induces an action of the Hecke operators on $S_{\leq k}(N,\FF)$.
Let $A_{\leq k}(N)$ be the $\FF$-subalgebra of $\text{End}_{\FF}(S_{\leq k}(N,\FF))$ generated by the Hecke operators $T_{\ell}$ and $S_{\ell}$ for all primes $\ell \nmid Np$.

Let $A(N) := \varprojlim_k A_{\leq k}(N)$. We have a natural surjective map $\TT'(N) \to A(N)$ and the image of $m'_{\rhob_0}$ under this surjective map gives a maximal ideal $m''_{\rhob_0}$ of $A(N)$ (see \cite[Section $1$]{D} for more details).
Let $A(N)_{\rhob_0}$ be the localization of $A(N)$ at $m''_{\rhob_0}$.
Let $\tilde R^{\ps,0}_{\rhob_0}$ be the universal pseudo-deformation ring of $(\tr(\rhob_0),\det(\rhob_0))$ with constant determinant in characteristic $p$ (see \cite[Section $1$]{D} for its definition).

\begin{cor}
Suppose $N$ satisfies the conditions of Corollary~\ref{condlem} and $p > 3$.
The ring $A(N)_{\rhob_0}$ has Krull dimension $2$ and $(\tilde R^{\ps,0}_{\rhob_0})^{\red} \simeq (A(N)_{\rhob_0})^{\red}$.
\end{cor}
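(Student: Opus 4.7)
The plan is to leverage Theorem~\ref{mainthm}, which gives $R_{\rhob_x} \simeq \TT(N)_{\rhob_0}$, together with Lemma~\ref{cuspheckelem}, and then descend to the mod $p$ setting by comparing $\tilde R^{\ps,0}_{\rhob_0}$ with $A(N)_{\rhob_0}$ as quotients of $R^{\ps}_{\rhob_0}$ and $\TT(N)_{\rhob_0}$ respectively.

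First I would construct a natural surjection $\tilde R^{\ps,0}_{\rhob_0} \to A(N)_{\rhob_0}$. Gluing the mod $p$ Galois pseudo-representations attached to mod $p$ modular eigenforms of level $N$ lifting $\rhob_0$, in analogy with Lemma~\ref{heckelem}, produces a pseudo-representation $(\bar\tau, \bar\delta) : G_{\QQ,Np} \to A(N)_{\rhob_0}$. The key point is that in $A(N)$ the operator $S_\ell$ takes the same value on mod $p$ eigenforms of weights differing by a multiple of $p-1$, since $\langle\ell\rangle\ell^{k-2} \equiv \langle\ell\rangle\ell^{k'-2} \pmod p$ when $k \equiv k' \pmod{p-1}$; this forces $\bar\delta$ to coincide with the constant character $\det(\rhob_0)$. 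The hypothesis $p > 3$ ensures Deligne--Serre liftings are available in weight $\geq 2$ for the mod $p$ eigenforms involved, so the gluing (via Galois pseudo-representations attached to characteristic zero lifts) applies. The universal property of $\tilde R^{\ps,0}_{\rhob_0}$ then yields the desired surjection.

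For the Krull dimension, use the above to view $A(N)_{\rhob_0}$ as a quotient of $R_{\rhob_x}$. The kernel of $R_{\rhob_x} \to A(N)_{\rhob_0}$ contains $p$ and a further element controlling the cyclotomic/weight direction of the determinant --- concretely an analogue of the element $\gamma$ from \S\ref{gmasec} whose vanishing forces $\det(\rho^{\univ})|_{I_p}$ to be trivial. Since $R_{\rhob_x}$ is Cohen--Macaulay of Krull dimension $4$ (Corollary~\ref{dimcor}) and $p$ is not a zero divisor in $R_{\rhob_x}$ (proof of Corollary~\ref{isomcor}), the generalized Krull principal ideal theorem (\cite[Theorem $10.2$]{E}) gives $\dim A(N)_{\rhob_0} \geq 2$. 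For the matching upper bound, I would bound $\dim \tilde R^{\ps,0}_{\rhob_0} \leq 2$ by a tangent space calculation: reducibility together with the fixed determinant in characteristic $p$ constrain the relevant $\Ext$ groups, and the hypotheses $\chibar|_{G_{\QQ_p}} \neq 1, \omega_p^{-1}$ and $\dim H^1(G_{\QQ,Np},\chibar) = 1$ force the tangent space to have dimension at most $2$.

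Finally, for $(\tilde R^{\ps,0}_{\rhob_0})^{\red} \simeq (A(N)_{\rhob_0})^{\red}$, I would mirror the proof of Theorem~\ref{mainthm}: show every minimal prime of $\tilde R^{\ps,0}_{\rhob_0}$ arises from a classical mod $p$ modular eigenform of level $N$, by using Deligne--Serre lifting (valid in weight $\geq 2$ since $p > 3$) together with the identification $R_{\rhob_x} \simeq \TT(N)_{\rhob_0}$ to produce a characteristic zero modular point, then reducing back to a mod $p$ point of $A(N)_{\rhob_0}$. Combined with the surjection above and the dimension count, this forces the kernel of $\tilde R^{\ps,0}_{\rhob_0} \to A(N)_{\rhob_0}$ to be contained in every minimal prime and hence nilpotent. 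The hardest part will be establishing $\dim \tilde R^{\ps,0}_{\rhob_0} \leq 2$ cleanly and matching the kernels of $R_{\rhob_x} \to A(N)_{\rhob_0}$ and $R^{\ps}_{\rhob_0} \to \tilde R^{\ps,0}_{\rhob_0}$ precisely enough to transfer the information from Theorem~\ref{mainthm} to the mod $p$ setting.
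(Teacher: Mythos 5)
The paper's proof of this corollary is essentially a one-line reduction: by Theorem~\ref{mainthm} and Lemma~\ref{cuspheckelem}, $(R^{\ps}_{\rhob_0})^{\red}\simeq\TT'(N)_{\rhob_0}$ and both have Krull dimension $4$, and then it hands off to the proof of \cite[Theorem $3$]{D}, which is precisely the machine that takes such a characteristic-zero big $R=\TT$ isomorphism as input and extracts the mod-$p$, constant-determinant statement with the dimension drop from $4$ to $2$. Your proposal does not invoke \cite[Theorem $3$]{D} at all and instead attempts to reconstruct that machinery from scratch; this is a genuinely different route, and it is where the gaps lie.

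Two specific gaps. First, for the lower bound $\dim A(N)_{\rhob_0}\geq 2$ you want to apply the generalized Krull principal ideal theorem, but that requires knowing the kernel of $R_{\rhob_x}\to A(N)_{\rhob_0}$ is \emph{generated} by (at most) two elements, not merely that it \emph{contains} $p$ and an analogue of $\gamma$. Establishing that the kernel is that small is exactly the nontrivial content that \cite[Theorem $3$]{D} supplies; as written the inequality does not follow. Second, the upper bound $\dim \tilde R^{\ps,0}_{\rhob_0}\leq 2$ via a tangent-space calculation is not substantiated. The hypotheses of the corollary control $\dim H^1(G_{\QQ,Np},\chibar)=1$ and $\chibar|_{G_{\QQ_p}}$, but they place no bound on $\dim H^1(G_{\QQ,Np},\chibar^{-1})$, which also feeds into the tangent space of the pseudo-deformation ring (cf. the extra hypothesis $1\leq\dim H^1(G_{\QQ,Np},\chibar^{-1})\leq 3$ needed for part~(2) of Theorem~\ref{thma}). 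So the tangent space need not have dimension $\leq 2$, and indeed only the \emph{reduced} quotient is claimed to have dimension $2$ in the corollary; a tangent-space bound would prove a stronger and possibly false statement. A smaller point: the role of $p>3$ is not really to make Deligne--Serre lifting work in weight $\geq 2$; it is inherited from the mod-$p$ Hecke algebra theory of \cite{BK} and \cite{D} that the argument rests on.

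The global strategy --- find a surjection from the mod-$p$ pseudo-deformation ring to the mod-$p$ Hecke algebra, pin down dimensions, and kill the kernel modulo nilpotents via density of modular points --- is sound and is presumably close to what \cite[Theorem $3$]{D} does internally. But the proposal elides the technical core (the precise description of the kernels on both sides and the matching of ideals under the isomorphism of Theorem~\ref{mainthm}), which is exactly what the cited theorem carries. In short: correct skeleton, but the hard steps are asserted rather than proved, and the paper sidesteps them by reduction.
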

\begin{proof}
By Theorem~\ref{mainthm} and Lemma~\ref{cuspheckelem}, we conclude that the map $R^{\ps}_{\rhob_0} \to \TT'(N)_{\rhob_0}$ obtained by composing $\Phi$ with the surjective map $\TT(N)_{\rhob_0} \to \TT'(N)_{\rhob_0}$ induces an isomorphism $(R^{\ps}_{\rhob_0})^{\red} \simeq \TT'(N)_{\rhob_0}$ and both have Krull dimension $4$. 
Note that while proving \cite[Theorem $3$]{D}, we prove that if $R^{\ps}_{\rhob_0} \simeq \TT'(N)_{\rhob_0}$ and the Krull dimension of $R^{\ps}_{\rhob_0}$ is $4$, then $(\tilde R^{\ps,0}_{\rhob_0})^{\red} \simeq (A(N)_{\rhob_0})^{\red}$ and the Krull dimension of $A(N)_{\rhob_0}$ is $2$.
However, our proofs of these two claims also work if we replace $R^{\ps}_{\rhob_0}$ by $(R^{\ps}_{\rhob_0})^{\red}$ in the assumptions (see \cite[Section 8]{D} for more details).
Hence, the corollary follows from Theorem~\ref{reducecor}.
\end{proof}

\section{Application to level raising of modular forms}
\label{levelsec}
In this section, we will give an application of our results to finding \emph{newforms} $f$ of non-optimal levels (i.e. of level bigger than $N_0$) such that $\rho_f$ lifts $\rhob_0$.
These results are in the same spirit as the ones obtained by Diamond--Taylor (\cite{DT}) for irreducible $\rhob_0$.
In the case of $\rhob_0 = 1 \oplus \omega_p^k$ for odd $k > 1$, our results answer the conjecture of Billerey--Menares (\cite[Conjecture $3.2$]{BM}) partially in the setup described below.

We will first describe the setup with which we will be working. We also slightly change the notation for the deformation rings that we have been using so far.

\begin{setup}
\label{levelsetup}
Let $\rhob_0 : G_{\QQ,Cp} \to \GL_2(\FF)$ be an odd, reducible and semi-simple representation for some integer $C$ (as introduced in Setup~\ref{basic}).
We keep the other notations of Setup~\ref{basic}.
Let $N_0$ be the tame Artin conductor of $\rhob_0$.
Suppose $\dim(H^1(G_{\QQ,N_0p},\chibar)) =1$ and $\chibar|_{G_{\QQ_p}} \neq 1, \omega_p^{-1}|_{G_{\QQ_p}}$.
\begin{enumerate}
\item\label{bhaag1} Let $S_0$ be the set of primes $\ell$ such that $\ell \mid N_0$, $p \mid \ell -1$ and either $\chibar_1$ or $\chibar_2$ is unramified at $\ell$. 
\item\label{bhaag2} Let $N'_0 = N_0 \prod_{\ell \in S_0} \ell$.
\item\label{bhaag3} Let $x \in H^1(G_{\QQ,N_0p},\chibar)$ be a non-zero element and $\rhob_x : G_{\QQ,N_0p} \to \GL_2(\FF)$ be the representation corresponding to $x$ as defined in Definition~\ref{defodef2}.
\item\label{bhaag4} Let $r \geq 1$ be an integer and let $\ell_1,\cdots,\ell_r$ be primes such that $\ell_i \nmid N_0p$, $\chibar|_{G_{\QQ_{\ell_i}}} = \omega_p^{-1}|_{G_{\QQ_{\ell_i}}}$ and $p \nmid \ell_i-1$ for all $1 \leq i \leq r$. 
\item\label{bhaag5} Let $s$ be the cardinality of the set $\{j \mid 1 \leq j \leq r \text{ and } p \nmid (\ell_j+1)\}$. If $s \neq r$, then assume, after reordering if necessary, that $p \nmid (\ell_i+1)$ for all $1 \leq i \leq s$ and $p \mid (\ell_i+1)$ for all $s+1 \leq i \leq r$.
\item\label{bhaag6} If $s=r$, then let $N=N'_0\prod_{i=1}^{r}\ell_i$ and if $s \neq r$, then let $N=N'_0\prod_{i=1}^{s}\ell_i\prod_{j=s+1}^{r}\ell_j^2$.
\item\label{bhaag7} Note that $\rhob_x$ is also a representation of $G_{\QQ,Np}$. Let $R_{\rhob_x}(N_0)$ and $R_{\rhob_x}(N)$ be the universal deformation rings of $\rhob_x$ in $\mathcal{C}$ for the groups $G_{\QQ,N_0p}$ and $G_{\QQ,Np}$, respectively.
\item\label{bhaag8} Let $\rho^{\univ} : G_{\QQ,Np} \to \GL_2(R_{\rhob_x}(N))$ be the universal deformation of $\rhob_x$.
\end{enumerate}
\end{setup}

Throughout this section, we will use the notation given in the setup above and also assume that the hypotheses of the setup hold unless mentioned otherwise.

\subsection{Preliminary results}
We will now prove a some preliminary results which will be used later.
We begin with a result which is well known but we include a proof for the sake of completion.
\begin{lem}
\label{unramlem}
Suppose $f$ is a weight $k$ newform of level $M$ and $\ell$ is a prime not dividing $Mp$. Then $\rho_f|_{G_{\QQ_\ell}} \simeq \eta_1 \oplus \eta_2$, where $\eta_1, \eta_2 : G_{\QQ_{\ell}} \to \overline{\QQ_p}^{\times}$ are unramified characters of $G_{\QQ_\ell}$ such that $\eta_1\eta_2^{-1} \neq \chi_p|_{G_{\QQ_{\ell}}}, \chi_p^{-1}|_{G_{\QQ_{\ell}}}$.
\end{lem}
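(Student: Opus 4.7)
\textbf{Plan of proof for Lemma~\ref{unramlem}.}

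The strategy is: (1) use Deligne's theorem to get unramifiedness at $\ell$ together with the characteristic polynomial of Frobenius, (2) deduce that $\rho_f|_{G_{\QQ_\ell}}$ is a direct sum of two unramified characters, (3) apply the Ramanujan--Petersson bound to rule out the forbidden ratios.

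First, since $\ell \nmid Mp$ and $f$ is a newform of level $M$, the Galois representation $\rho_f$ is unramified at $\ell$, and the characteristic polynomial of $\rho_f(\frob_\ell)$ is $X^2 - a_\ell(f)X + \ell^{k-1}\epsilon(\ell)$, where $\epsilon$ is the nebentypus of $f$. Let $\alpha,\beta \in \overline{\QQ_p}^{\times}$ denote its two roots. Because $\rho_f|_{G_{\QQ_\ell}}$ factors through the pro-cyclic quotient $G_{\QQ_\ell}/I_\ell$, it is completely determined by the conjugacy class of $\rho_f(\frob_\ell)$.

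Second, I would argue that $\rho_f(\frob_\ell)$ is a diagonalizable matrix: if $\alpha \neq \beta$ this is automatic; if $\alpha = \beta$ it follows from Frobenius semisimplicity of modular Galois representations at primes of good reduction, which is a standard consequence of Deligne's construction and local-global compatibility for newforms. Consequently $\rho_f|_{G_{\QQ_\ell}} \simeq \eta_1 \oplus \eta_2$ for unramified characters $\eta_1,\eta_2 : G_{\QQ_\ell} \to \overline{\QQ_p}^{\times}$ with $\eta_1(\frob_\ell) = \alpha$ and $\eta_2(\frob_\ell) = \beta$.

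Third, to verify the non-equality condition, observe that both $\eta_1\eta_2^{-1}$ and $\chi_p|_{G_{\QQ_\ell}}$ are unramified characters, so it suffices to show $\alpha/\beta \neq \ell^{\pm 1}$. Here I would invoke the Ramanujan--Petersson conjecture (Deligne): $\alpha$ and $\beta$ lie in the number field generated by the Hecke eigenvalues of $f$, and under any complex embedding $|\alpha| = |\beta| = \ell^{(k-1)/2}$. If $\alpha = \ell\beta$ in $\overline{\QQ_p}$, the same algebraic relation holds in $\overline{\QQ}$, so applying a complex embedding yields $\ell^{(k-1)/2} = \ell \cdot \ell^{(k-1)/2}$, i.e.\ $\ell = 1$, contradicting $\ell$ being a prime. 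The case $\alpha = \ell^{-1}\beta$ is symmetric.

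The only nontrivial point is the Frobenius semisimplicity in the degenerate case $\alpha = \beta$; since this is well-documented in the literature for newforms of weight $k \geq 2$, I would simply cite the appropriate reference rather than reprove it here.
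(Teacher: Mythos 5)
Your proof is correct and takes essentially the same route as the paper: unramifiedness at $\ell$ plus the Ramanujan--Petersson bound. The paper's argument differs slightly in the final computation: instead of comparing $|\alpha|$ and $|\beta|$ directly, it observes that if $\eta_1\eta_2^{-1} = \chi_p^{\pm 1}$ then $\tr(\rho_f(\frob_\ell)) = a(1+\ell)$ and $\det(\rho_f(\frob_\ell)) = a^2\ell$ for some $a$, solves for $a$ using $\det = \epsilon(\frob_\ell)\ell^{k-1}$, and then contradicts the trace bound $|a_\ell(f)| \leq 2\ell^{(k-1)/2}$, which forces $(1+\ell) \leq 2\sqrt{\ell}$, impossible for a prime. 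Your version, comparing $|\alpha|$ and $|\beta|$ (both equal to $\ell^{(k-1)/2}$ under any complex embedding), is a bit cleaner and gets to $\ell = 1$ more directly; both ultimately invoke Deligne's theorem. One point in your favor: you explicitly flag the Frobenius-semisimplicity issue in the degenerate case $\alpha = \beta$ and note it must be cited, whereas the paper passes from unramifiedness to the direct-sum decomposition without comment (the paper does define $\rho_f$ to be globally semi-simple, but that does not by itself give local semi-simplicity at $\ell$). Your treatment of that step is the more careful one.
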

\begin{proof}
Since $\ell \nmid Mp$, we know that $\rho_f|_{G_{\QQ_{\ell}}}$ is unramified at $\ell$. Hence, we get that $\rho_f|_{G_{\QQ_\ell}} \simeq \eta_1 \oplus \eta_2$, for some unramified characters $\eta_1, \eta_2 : G_{\QQ_{\ell}} \to \overline{\QQ_p}^{\times}$ and $T_{\ell}$-eigenvalue of $f$ is $\tr(\rho_f(\text{Frob}_{\ell}))$.
Suppose $\eta_1\eta_2^{-1}$ is either $\chi_p|_{G_{\QQ_{\ell}}}$ or $\chi_p^{-1}|_{G_{\QQ_{\ell}}}$.
So there exists an $a \in \overline{\QQ_p}$ such that $\tr(\rho_f(\text{Frob}_{\ell})) = a(1+\ell)$ and $\det(\rho_f(\text{Frob}_{\ell})) = a^2\ell$.
Now $\det(\rho_f(\text{Frob}_{\ell})) = \epsilon(\text{Frob}_\ell)\ell^{k-1}$, where $\epsilon$ is the nebentypus of $f$.
Hence, it follows that the $T_{\ell}$-eigenvalue of $f$ is $(1+\ell)\ell^{(k-2)/2}\zeta_0$ for some squareroot $\zeta_0$ of $\epsilon(\text{Frob}_\ell)$.
But Weil bound on the $T_{\ell}$ eigenvalue of $f$ (given by the Ramanujan--Petersson conjecture) implies that $(\ell+1)\ell^{(k-2)/2} \leq 2 \ell^{(k-1)/2}$ (see \cite[Lemma $6.2$]{DM} and its proof for more details). 
This means $\ell+1 \leq 2\sqrt{\ell}$ which is not possible as $\ell >1$.
This gives us a contradiction. 
\end{proof}

\begin{lem}
\label{ordlem}
Suppose $\chibar_2$ is unramified at $p$ and $\chibar_1\chibar_2 = \bar\psi\omega_p^{k_0-1}$, with $\bar\psi$ unramified at $p$ and $1 < k_0 < p$. 
Let $k>2$ be an integer such that $k \equiv k_0 \Mod{p-1}$. 
Suppose $g$ is a $p$-ordinary eigenform of weight $k$ such that $\rho_g$ lifts $\rhob_0$ and suppose $M$ is the level of the newform underlying $g$.
Then $p \nmid M$.
\end{lem}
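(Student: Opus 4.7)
My strategy is to assume $p \mid M$ and derive a contradiction by combining the $p$-ordinary structure at $p$ with the residual constraints coming from $\rho_f$ lifting $\rhob_0$.

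First I would apply the theorem of Wiles on $p$-ordinary representations (available also through Saito's work, cited earlier in the paper): the $p$-ordinarity of $f$ forces
\[\rho_f|_{G_{\QQ_p}} \simeq \begin{pmatrix}\eta_1 & *\\ 0 & \eta_2\end{pmatrix},\]
with $\eta_2$ unramified and $\eta_2(\frob_p)$ equal to the unit $U_p$-eigenvalue of $f$. The hypothesis $\chibar_1\chibar_2 = \bar\psi\omega_p^{k_0-1}$ with both $\bar\psi$ and $\chibar_2$ unramified at $p$ gives $\chibar_1|_{I_p} = \omega_p^{k_0-1}|_{I_p}$, which is nontrivial since $0 < k_0-1 < p-1$. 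Matching the semisimple reduction of $\rho_f|_{G_{\QQ_p}}$ against $\chibar_1|_{G_{\QQ_p}} \oplus \chibar_2|_{G_{\QQ_p}}$ then forces $\bar\eta_2 = \chibar_2|_{G_{\QQ_p}}$ and $\bar\eta_1 = \chibar_1|_{G_{\QQ_p}}$.

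Next, using $\det\rho_f = \epsilon\chi_p^{k-1}$ where $\epsilon$ denotes the nebentypus of $f$, one writes $\eta_1 = \epsilon\chi_p^{k-1}\eta_2^{-1}$. Restricting to $I_p$, reducing modulo $\varpi_f$, and invoking $k \equiv k_0 \Mod{p-1}$ gives
\[\omega_p^{k_0-1}|_{I_p} = \bar\eta_1|_{I_p} = \bar\epsilon|_{I_p}\cdot\omega_p^{k-1}|_{I_p} = \bar\epsilon|_{I_p}\cdot\omega_p^{k_0-1}|_{I_p},\]
hence $\bar\epsilon|_{I_p} = 1$, i.e., $\bar\epsilon$ is unramified at $p$.

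Finally, I would derive the contradiction via conductor analysis. Since $k-1 \geq 2$, the Hodge--Tate weights $\{0, k-1\}$ of $\rho_f|_{G_{\QQ_p}}$ exclude any (twisted) Steinberg component at $p$ (whose Galois side has HT weights $\{0,1\}$), and so $\rho_f|_{G_{\QQ_p}}$ must be a split principal series $\eta_1 \oplus \eta_2$. By Carayol's local--global compatibility, together with the unramifiedness of the Weil--Deligne character attached to $\chi_p^{k-1}$, one obtains $v_p(M) = a_p(\eta_1) = a_p(\epsilon)$. Then $p \mid M$ forces $a_p(\epsilon) \geq 1$; if $a_p(\epsilon) = 1$, then $\epsilon|_{I_p}$ would be tamely ramified and non-trivial, so $\bar\epsilon|_{I_p} \neq 1$, directly contradicting the second step. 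The main obstacle is therefore the wildly ramified subcase $a_p(\epsilon) \geq 2$, in which $\epsilon|_{I_p}$ is a non-trivial $p$-power order character trivial modulo $\varpi_f$. I would handle this via a twisting argument: such an $f$ would have to arise by twisting a lower-level $p$-ordinary newform by a wildly ramified character of $p$-power order, but any such twist destroys the unramified quotient of the local Galois representation at $p$ and thereby breaks $p$-ordinarity, yielding the desired contradiction.
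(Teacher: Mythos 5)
Your proposal diverges from the paper's route. The paper first concludes that the nebentypus $\epsilon$ of $f$ is unramified at $p$ and then performs a case analysis on the automorphic type of the local component $\pi_{f,p}$ (principal series, special, supercuspidal), invoking the explicit $U_p$-eigenvalue computations of Loeffler--Weinstein: in each case with $p$ dividing the level, the $U_p$-eigenvalue is either $0$ or a root of unity times $p^{(k-2)/2}$, hence not a $p$-adic unit once $k > 2$, contradicting $p$-ordinarity. You instead work on the Galois side through the Artin conductor at $p$. There are two places where your version does not close.

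First, the exclusion of the Steinberg case via Hodge--Tate weights is not correct. Every weight-$k$ newform has $\rho_f|_{G_{\QQ_p}}$ of Hodge--Tate weights $\{0,k-1\}$ regardless of the local automorphic type at $p$; a twist of the Steinberg Weil--Deligne representation imposes no constraint on the Hodge filtration of the associated $p$-adic representation beyond admissibility, so it is simply false that the ``Galois side'' of a twisted Steinberg has Hodge--Tate weights $\{0,1\}$. The right reason to exclude this case, used by the paper, is that the $U_p$-eigenvalue of a weight-$k$ special-at-$p$ newform has $p$-adic valuation $(k-2)/2$, which is positive for $k > 2$, so such a form cannot be $p$-ordinary.

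Second, and this is the substantive gap, the wildly ramified case $a_p(\epsilon) \geq 2$ is not actually ruled out. Your step giving $\bar\epsilon|_{I_p} = 1$ is consistent with $\epsilon|_{I_p}$ being a nontrivial character of $p$-power order, since such characters reduce trivially modulo $\varpi_f$. The proposed twisting argument does not save the situation: a $p$-ordinary newform whose nebentypus is wildly ramified at $p$ is not, in general, a twist of a lower-level $p$-ordinary newform by a wildly ramified character of $p$-power order. Such forms exist in abundance (for instance as specializations of Hida families at wildly ramified nebentype characters), and for them the unramified quotient $\eta_2$ of $\rho_f|_{G_{\QQ_p}}$ is perfectly intact, so the contradiction you are after does not materialize. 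Closing this case requires an input beyond the congruence modulo $\varpi_f$, and your proposal does not supply one.
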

\begin{proof}
Let $f$ be the newform underlying $g$.
Thus, the representation $\rho_g$ factors through $G_{\QQ,Mp}$ and is isomorphic to $\rho_f$ as a $G_{\QQ,Mp}$-representation.
Since $g$ is $p$-ordinary, it follows, from \cite[Theorem $2$]{Wi}, that $\rho_f$ is an ordinary representation such that $\det(\rho_f) = \epsilon\chi_p^{k-1}$, where $\epsilon$ is the nebentypus of $f$. In other words, $\rho_f|_{G_{\QQ_p}} = \begin{pmatrix} \eta_1\chi_p^{k-1} & *\\ 0 & \eta_2 \end{pmatrix}$, where $\eta_1$ and $\eta_2$ are unramified characters of $G_{\QQ_p}$.
As $k \equiv k_0 \Mod{p-1}$, we see that the character $\epsilon$ is unramified at $p$.

Suppose $p$ divides the level of $f$. Then the corresponding $\GL_2(\QQ_p)$-representation $\pi_{f,p}$ (i.e. $p$-component of the automorphic representation attached to $f$) is either principal series, special or supercuspidal. 
Suppose $\pi_{f,p}$ is the principal series representation $\pi(\epsilon_1,\epsilon_2)$ for some characters $\epsilon_1$ and $\epsilon_2$. As $p$ divides the level of $f$, at least one of $\epsilon_1$ and $\epsilon_2$ should be ramified. Since the nebentypus of $f$ is unramified at $p$, it follows that both $\epsilon_1$ and $\epsilon_2$ are ramified. But then \cite[Section $2.2$]{LW} implies that the $U_p$-eigenvalue of $f$ is $0$ which contradicts the fact that $f$ is $p$-ordinary. Hence, $\pi_{f,p}$ is not principal series.

If $\pi_{f,p}$ is supercuspidal, then \cite[Section $2.2$]{LW} implies that the $U_p$-eigenvalue of $f$ is $0$ which again gives a contradiction. Hence, $\pi_{f,p}$ is not supercuspidal.
If $\pi_{f,p}$ is special, then \cite[Section $2.2$]{LW} implies that the $U_p$-eigenvalue of $f$ is either $0$ or $p^{(k-2)/2}.\zeta$ for some root of unity $\zeta$. Note that $k > 2$ which means the $U_p$-eigenvalue of $f$ is a not a $p$-adic unit. This again contradicts the fact that $f$ is $p$-ordinary.
 Hence, it follows that $p$ does not divide the level of $f$.
\end{proof}

Let $N$ be the integer defined in Point~\eqref{bhaag6} of Setup~\ref{levelsetup}.
\begin{lem}
\label{ord}
Suppose $\chibar_2$ is unramified at $p$, $\chibar|_{G_{\QQ_p}} \neq 1, \omega_p^{-1}$ and $\chibar_1\chibar_2 = \bar\psi\omega_p^{k_0-1}$, with $\bar\psi$ unramified at $p$ and $1 < k_0 < p$. 
Let $k$ be an integer such that $k \equiv k_0 \Mod{p-1}$. 
Then there exist $\alpha, \beta, \delta_k \in R_{\rhob_x}(N)$ such that if $\rho_k := \rho^{\univ} \pmod{(\alpha,\beta,\delta_k)}$ then
\begin{enumerate} 
\item $\rho_k|_{G_{\QQ_p}} \simeq \begin{pmatrix} \eta_1 & * \\ 0 & \eta_2 \end{pmatrix}$, where $\eta_1$ and $\eta_2$ are characters lifting $\chibar_1$ and $\chibar_2$, repsectively,
\item $\eta_2$ is unramified at $p$,
\item $\det(\rho_k) = \epsilon_k\chi_p^{k-1}$ and the character $\epsilon_k$ is unramified at $p$.
\end{enumerate}
\end{lem}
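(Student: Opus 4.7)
The plan is to transcribe the construction of \S\ref{gmasec} from $R_{\rhob_x}$ to $R_{\rhob_x}(N)$, exploiting the additional assumption that $\chibar_2$ is unramified at $p$. Since $\chibar|_{G_{\QQ_p}} \neq 1$, the analogue of Lemma~\ref{diagonal} applied to $R_{\rhob_x}(N)$ lets us arrange, within its equivalence class, that $\rho^{\univ}(g_0)$ is diagonal with distinct diagonal entries modulo $m_{R_{\rhob_x}(N)}$. Then \cite[Lemma $2.4.5$]{Bel} yields ideals $B_p, C_p \subset R_{\rhob_x}(N)$ with $R_{\rhob_x}(N)[\rho^{\univ}(G_{\QQ_p})] = \begin{pmatrix} R_{\rhob_x}(N) & B_p \\ C_p & R_{\rhob_x}(N) \end{pmatrix}$. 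The hypothesis $\chibar|_{G_{\QQ_p}} \neq 1, \omega_p^{-1}$, combined with \cite[Theorem $1.5.5$]{BC} (as in \S\ref{gmasec}), shows $C_p$ is generated by a single element $\alpha$. Fixing a lift $i_p \in I_p$ of a topological generator of $1 + p\ZZ_p$, write $\rho^{\univ}(i_p) = \begin{pmatrix} a & b \\ c & d \end{pmatrix}$ and set $\beta := d - \hat\chibar_2(i_p) = d-1$ (since $\hat\chibar_2$ is unramified at $p$, so $\hat\chibar_2(i_p) = 1$), together with $\delta_k := \det(\rho^{\univ}(i_p)) - \chi_p^{k-1}(i_p)$. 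Both $\beta$ and $\delta_k$ lie in $m_{R_{\rhob_x}(N)}$, the latter because $\chibar_1\chibar_2 = \bar\psi\omega_p^{k_0-1}$ with $\bar\psi$ unramified at $p$ and $k\equiv k_0\pmod{p-1}$.

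Set $\rho_k := \rho^{\univ} \pmod{(\alpha,\beta,\delta_k)}$. Condition (1) is immediate: modulo $\alpha$ the ideal $C_p$ vanishes, so $\rho_k|_{G_{\QQ_p}}$ is upper triangular of the form $\begin{pmatrix} \eta_1 & * \\ 0 & \eta_2 \end{pmatrix}$ with $\eta_i$ lifting $\chibar_i$. For (2), modulo $\beta$ we have $\eta_2(i_p) = 1$, so $\eta_2$ is trivial on the pro-$p$ part $1 + p\ZZ_p$ of $I_p^{\mathrm{ab}}$, which $i_p$ topologically generates. Because $\chibar_2$ is unramified at $p$, the restriction of $\eta_2$ to the tame quotient $\ZZ/(p-1)\ZZ$ of $I_p^{\mathrm{ab}}$ lands in $1 + m$, where $m$ is the maximal ideal of $R_{\rhob_x}(N)/(\alpha,\beta,\delta_k)$; this group is pro-$p$, while $\ZZ/(p-1)\ZZ$ has order coprime to $p$, so the restriction is trivial. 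Combining the two, $\eta_2$ is unramified at $p$.

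For (3), the character $\epsilon_k := \det(\rho_k)\chi_p^{1-k}$ reduces to $\chibar_1\chibar_2\omega_p^{1-k} = \bar\psi\omega_p^{k_0-k} = \bar\psi$, since $k\equiv k_0\pmod{p-1}$, and $\bar\psi$ is unramified at $p$ by hypothesis. By the definition of $\delta_k$, $\det(\rho_k)(i_p) = \chi_p^{k-1}(i_p)$, whence $\epsilon_k(i_p) = 1$, and the same pro-$p$ versus prime-to-$p$ argument as for $\eta_2$ shows $\epsilon_k$ is unramified at $p$, giving $\det(\rho_k) = \epsilon_k\chi_p^{k-1}$. The only point worth highlighting is the standard observation used twice above: a character of $G_{\QQ_p}$ valued in an object of $\mathcal{C}$ that lifts an unramified mod-$p$ character is already unramified as soon as it kills the chosen topological generator $i_p$ of the pro-$p$ part of $I_p^{\mathrm{ab}}$. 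Given this, the construction of $\alpha, \beta, \delta_k$ transports from \S\ref{gmasec} without any further difficulty, and the only new input is the hypothesis that $\chibar_2$ (and hence $\bar\psi$) is unramified at $p$, which forces $\hat\chibar_2(i_p) = 1$ and allows us to dispense with the tame part automatically.
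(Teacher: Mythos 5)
Your proposal is correct and is essentially the paper's own argument: the paper simply observes that the elements $\alpha$, $\beta$, $\delta_k$ constructed after Lemma~\ref{diagonal} in \S\ref{gmasec} depend only on $\chibar|_{G_{\QQ_p}} \neq 1, \omega_p^{-1}$ (not on the $\dim(H^1(G_{\QQ,Np},\chibar))=1$ hypothesis) and hence transport to $R_{\rhob_x}(N)$, exactly as you carry out. Your verification of (1)--(3), via killing the pro-$p$ generator $i_p$ and using that a lift of an unramified character with values in $1+m$ is trivial on the prime-to-$p$ tame quotient, matches the reasoning of Lemma~\ref{selmlem} and the surrounding discussion.
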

\begin{proof}
From the discussion after Lemma~\ref{diagonal}, it immediately follows that the elements $\alpha$, $\beta$ and $\delta_k$ defined in Definition~\ref{eltdef} satisfy the conclusion of the lemma.
Note that their definition only depended on the hypothesis $\chibar|_{G_{\QQ_p}} \neq 1, \omega_p^{-1}$ and Lemma~\ref{diagonal} (which in turn only depended on the hypothesis $\chibar|_{G_{\QQ_p}} \neq 1$) and not on whether $\dim(H^1(G_{\QQ,Np},\chibar))=1$ or not. 
So we can define them in the current setup in the exact same way.
\end{proof}

\begin{rem}
Note that the definitions of $\alpha$ and $\beta$ are independent of $k$.
\end{rem}
\subsection{Increasing the ramification}
\label{ramsec}
Let $r$ be the integer and $\ell_1,\cdots,\ell_r$ be the primes from Point~\eqref{bhaag4} of Setup~\ref{levelsetup}.
For any $1 \leq i \leq r$, our assumptions imply that the characteristic polynomial of $\rhob_0(\text{Frob}_{\ell_i})$ has distinct roots. 
Let $g_{\ell_i}$ be a lift of $\text{Frob}_{\ell_i}$ in $G_{\QQ,Np}$. By Hensel's lemma, the characteristic polynomial of $\rho^{\univ}(g_{\ell_i})$ has two distinct roots $\alpha_i$ and $\beta_i$ in $R_{\rhob_x}(N)$ such that $\alpha_i \pmod{m_{R_{\rhob_x}(N)}} = \chibar_2(\text{Frob}_{\ell_i})$ and $\beta_i \pmod{m_{R_{\rhob_x}(N)}} = \chibar_1(\text{Frob}_{\ell_i})$ (which means $\alpha_i\beta_i^{-1} \equiv \ell_i \Mod{m_{R_{\rhob_x}(N)}}$).

Recall, from Point~\eqref{bhaag5} of Setup~\ref{levelsetup}, that $s$ is $|\{j \mid 1 \leq j \leq r \text{ and } p \nmid \ell_j+1 \}|$, $p \nmid \ell_i+1$ for all $1 \leq i \leq s$ and $p \mid \ell_i+1$ for all $s +1 \leq i \leq r$.
We will now recall some results of B\"{o}ckle (\cite{Bo1}) which will be crucially used in the proof of the main result of this section.

\begin{lem}
\label{boclem}
 For all $1 \leq i \leq r$, $\rho^{\univ}|_{I_{\ell_i}}$ factors through the $\ZZ_p$-quotient (i.e. the maximal pro-$p$ quotient) of the tame inertia group at $\ell_i$.
Moreover, if $i_{\ell_i}$ is a lift of a topological generator of this $\ZZ_p$-quotient in $I_{\ell_i}$, then, 
\begin{enumerate}
 \item For every $1 \leq i \leq s$, there exists a $P_i \in Id + M_2(m_{R_{\rhob_x}(N)})$ and $w_i \in m_{R_{\rhob_x}(N)}$ such that $$P_i(\rho^{\univ}(i_{\ell_i}))P_i^{-1} = \begin{pmatrix} 1 & 0 \\ w_i & 1 \end{pmatrix} \text{ and } P_i(\rho^{\univ}(g_{\ell_i}))P_i^{-1} = \begin{pmatrix} \beta_i & 0 \\ 0 & \alpha_i \end{pmatrix},$$
\item For every $s+1 \leq j \leq r$, there exists a $Q_j \in Id + M_2(m_{R_{\rhob_x}(N)})$ and $u_j, v_j \in m_{R_{\rhob_x}(N)}$ such that $$Q_j(\rho^{\univ}(i_{\ell_j}))Q_j^{-1} = \begin{pmatrix} \sqrt{1+u_jv_j} & u_j \\ v_j & \sqrt{1+u_jv_j} \end{pmatrix} \text{ and } Q_j(\rho^{\univ}(g_{\ell_j}))Q_j^{-1} = \begin{pmatrix} \beta_j & 0 \\ 0 & \alpha_j \end{pmatrix}.$$
Moreover, there exist $f_{\ell_j}, h_{\ell_j} \in R_{\rhob_x}(N)$ be such that $$(Q_j(\rho^{univ}(i_{\ell_j}))Q_j^{-1})^{\ell_j} = \begin{pmatrix} f_{\ell_j}  & u_jh_{\ell_j} \\ v_jh_{\ell_j} &  f_{\ell_j} \end{pmatrix}.$$
\end{enumerate}
\end{lem}
\begin{proof}
This follows directly from \cite[Lemma $4.9$]{Bo1} and its proof.
\end{proof}

Before recalling the next result, we establish some more notation.
If $s \neq 0$, then let $M := \dfrac{N}{\ell_1}$ and if $s = 0$, then let $M := \dfrac{N}{\ell_1^2}$.
Thus, $\ell_1 \nmid M$.
Moreover, if $s \neq 0$, then $p \nmid \ell_1 + 1$ and if $s =0$, then $p \mid \ell_1+1$.
Let $\phi_1 := \beta_1\alpha_1^{-1}$.
Suppose $R(M)_{\rhob_x} \simeq \dfrac{W(\FF) \llbracket X_1,\cdots,X_m \rrbracket}{(r_1,\cdots,r_{m'})}$. In the notation established above, we have:

\begin{thm}
\label{bocthm}
\begin{enumerate}
 \item If $s \neq 0$, then there exists a surjective map $$\mathcal{G} : W(\FF)\llbracket X_1,\cdots,X_m, T \rrbracket \to R(N)_{\rhob_x}$$
and elements $f_1,\cdots,f_{m'},g \in R_0$ such that $f_i \pmod{T} = r_i$ for all $1 \leq i \leq m'$, $\mathcal{G}(T) = w_1$, $\mathcal{G}(g) = \alpha_1-\ell_1\beta_1$ and $\ker(\mathcal{G})$ is generated by the set $\{f_1,\cdots,f_{m'}, gT\}$.
\item If $s = 0$, then there exists a surjective map $$\mathcal{G} : W(\FF)\llbracket X_1,\cdots,X_m, T,T' \rrbracket \to R(N)_{\rhob_x}$$
and elements $f_1,\cdots,f_{m'},g,g' \in R_0$ such that $f_i \pmod{(T,T')} = r_i$ for all $1 \leq i \leq m'$, $\mathcal{G}(T) = u_1$, $\mathcal{G}(T') = v_1$, $\mathcal{G}(g) = h_{\ell_1}-\phi_1$, $\mathcal{G}(g') = h_{\ell_1}-\phi_1^{-1}$ and $\ker(\mathcal{G})$ is generated by the set $\{f_1,\cdots,f_{m'}, gT,g'T'\}$.
\end{enumerate}
\end{thm}
\begin{proof}
This is \cite[Theorem $4.7$]{Bo1}.
\end{proof}

Let $R^{\ps}_{\rhob_0}(N)$ and $R^{\ps}_{\rhob_0}(N_0)$ be the universal deformation rings of the pseudo-representation $(\tr(\rhob_0),\det(\rhob_0))$ in $\mathcal{C}$ for the groups $G_{\QQ,Np}$ and $G_{\QQ,N_0p}$, respectively.

\begin{lem}
\label{surjective}
The morphism $\phi : R^{\ps}_{\rhob_0}(N) \to R_{\rhob_x}(N)/(u_{s+1},\cdots,u_r)$ induced by $(\tr(\rho^{\univ}),\det(\rho^{\univ}))\pmod{(u_{s+1},\cdots,u_r)}$ is surjective.
\end{lem}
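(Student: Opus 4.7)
I would adapt the argument from the proof of Lemma~\ref{factorlem}. In that lemma, surjectivity was deduced from $\dim H^1(G_{\QQ,Np},\chibar)=1$, which forced the upper-right ideal of the natural GMA of $\rho^{\univ}$ to be principal. In the current setup the dimension is strictly larger: by the Wiles formula computation in the proof of Lemma~\ref{dimonelem}, one checks that $\dim H^1(G_{\QQ,Np},\chibar) = 1+(r-s)$, the extra contribution arising because for each $j>s$ one has $\dim H^0(G_{\QQ_{\ell_j}},\omega_p\chibar^{-1})=1$ (as $p\mid \ell_j+1$). The guiding idea is that the quotient by $(u_{s+1},\ldots,u_r)$ eliminates exactly these $r-s$ extra classes, restoring the monogenicity needed for the Lemma~\ref{factorlem} argument to run.

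The first step is to set up the GMA. By Lemma~\ref{diagonal} (which requires only $\chibar|_{G_{\QQ_p}}\neq 1$, as remarked in Lemma~\ref{ord}), I may assume $\rho^{\univ}(g_0)$ is diagonal, and then \cite[Lemma 2.4.5]{Bel} yields the GMA $R_{\rhob_x}(N)[\rho^{\univ}(G_{\QQ,Np})] = \left(\begin{smallmatrix} R & B \\ C & R\end{smallmatrix}\right)$ with $R := R_{\rhob_x}(N)$. Setting $S := R/(u_{s+1},\ldots,u_r)$, $\bar\rho := \rho^{\univ}\pmod{(u_{s+1},\ldots,u_r)}$, and letting $B_S,C_S$ be the images of $B,C$ in $S$, I obtain an induced GMA $\left(\begin{smallmatrix} S & B_S \\ C_S & S\end{smallmatrix}\right)$ for $\bar\rho$. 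The central task is to prove that $B_S$ is principal as an $S$-module. Each $u_j$ with $j>s$ is the upper-right entry of $Q_j\rho^{\univ}(i_{\ell_j})Q_j^{-1}$ and hence belongs to $B$, so $B_S/m_SB_S = B/((u_{s+1},\ldots,u_r)R+m_RB)$. By \cite[Theorem 1.5.5]{BC}, $B/m_RB$ injects into $H^1(G_{\QQ,Np},\chibar)$, and under this injection the class of $u_j$ is a nonzero element whose restriction to $H^1(I_{\ell_j},\chibar)^{G_{\QQ_{\ell_j}}/I_{\ell_j}}$ is nontrivial while vanishing on $I_{\ell_k}$ for $k\neq j$ (for $k\leq s$ this local group is itself zero, and for other $k>s$ the restriction is zero by locality). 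The $u_j$'s for $j>s$ are therefore linearly independent, and combined with a single class pulled back from $H^1(G_{\QQ,N_0p},\chibar)$ under inflation (which has dimension one by hypothesis), they span all of $H^1(G_{\QQ,Np},\chibar)$. Consequently $B_S/m_SB_S$ has $\FF$-dimension exactly one, and Nakayama produces a generator $b_0$ of $B_S$.

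With $B_S$ principal, the rest of the argument mirrors Lemma~\ref{factorlem}. The image of $\phi$ contains both eigenvalues of $\bar\rho(g_0)$, and, using $\tr(\bar\rho(g))$ and $\tr(\bar\rho(g_0g))$ together with the invertibility of the difference of eigenvalues, the individual diagonal entries $a_g,d_g$ of $\bar\rho(g)$ for every $g$. The products $b_gc_g$ and $b_gc_{g'}+b_{g'}c_g$ then lie in $\text{Im}(\phi)$ via the determinant of $\bar\rho(g)$ and the trace of $\bar\rho(g)\bar\rho(g')$. Finally, mimicking the Krull-dimension argument of Lemma~\ref{factorlem} --- which invokes the Gouv\^{e}a--Mazur infinite fern to exclude $b_0$ from lying in any minimal prime of $S$ --- one conjugates $\bar\rho$ by a suitable diagonal element of $\GL_2$ of the total ring of fractions of $S$ to bring all matrix entries of the resulting equivalent deformation into $\text{Im}(\phi)$, yielding $\text{Im}(\phi)=S$. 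The main obstacle I anticipate is the cohomological bookkeeping in the central step: matching the GMA generators $u_j$ with specific classes in $H^1(G_{\QQ,Np},\chibar)$ under the bijection of \cite[Theorem 1.5.5]{BC}. Once this identification is in place the remaining arguments are routine adaptations of Lemma~\ref{factorlem}.
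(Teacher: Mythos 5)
Your approach diverges from the paper's and contains a gap that prevents it from establishing surjectivity. The central step of your proposal is to prove that the upper‑right ideal $B_S$ of the GMA $S[\bar\rho(G_{\QQ,Np})]$ is principal and then to run the argument of Lemma~\ref{factorlem}. But that argument is calibrated to the \emph{pseudo-representation} setting, where one starts only from a trace/determinant pair $(\tau_N,\delta_N):G_{\QQ,Np}\to\TT(N)_{\rhob_0}$ and must build a genuine $\GL_2$-valued representation via fractional ideals $B,C$ of the total fraction ring; there, principality of $B$ is a real obstruction. In your setting $\bar\rho$ already takes values in $\GL_2(S)$ and lifts the non-split extension $\rhob_x$, so there is some $g\in G_{\QQ,Np}$ with $\rhob_x(g)_{12}\neq 0$; consequently the upper-right entry $b_g$ of $\rho^{\univ}(g)$ is a unit, forcing $B = R_{\rhob_x}(N)$ and $B_S=S$ outright. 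Thus $B_S/m_SB_S$ is always one-dimensional, your cohomological bookkeeping involving $H^1(G_{\QQ,Np},\chibar)$ and the $u_j$'s computes nothing, and ``$B_S$ principal'' carries no information about whether $\phi$ is surjective. Separately, your final conjugation step appeals to the Gouv\^{e}a--Mazur infinite fern to exclude the generator $b_0$ from minimal primes, but that dimension estimate is available only for components of the big Hecke algebra; the auxiliary quotient $S=R_{\rhob_x}(N)/(u_{s+1},\dots,u_r)$ is not a Hecke algebra, so this step has no justification either.

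The paper's argument instead uses the tangent-space criterion for surjectivity of a local morphism of complete Noetherian local $W(\FF)$-algebras: one shows that any $f:S\to\FF[\epsilon]/(\epsilon^2)$ for which $(\tr(f\circ\rho'),\det(f\circ\rho'))$ is the trivial pseudo-deformation must itself be trivial. Evaluating the trace identity $\tr(\rho'(gi_{\ell_i}))-\tr(\rho'(g))\in\ker(f)$ on a group element $h$ with $\rhob_x(h)$ unipotent upper-triangular, and using B\"ockle's explicit normal form of $\rho^{\univ}|_{I_{\ell_i}}$, forces $\bar w_i\in\ker(f)$ for $i\le s$ and $\bar v_j\in\ker(f)$ for $j>s$. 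Together with $u_j\in J$ this makes $f\circ\rho'$ unramified at every $\ell_i$, hence a deformation of $\rhob_x$ for $G_{\QQ,N_0p}$, where $\dim H^1(G_{\QQ,N_0p},\chibar)=1$ gives injectivity of the tangent map by \cite[Corollary 1.4.4(2)]{Ki3}. The essential idea you are missing is this reduction to the unramified-outside-$N_0p$ case via the trace identities on inertia, not a GMA or Nakayama computation; the only cohomological input is the dimension-one hypothesis at level $N_0$, not any dimension count at level $N$.
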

\begin{proof}
Let $J = (u_{s+1},\cdots,u_r)$ and  $\rho' := \rho^{\univ} \pmod{J}$.
Let $f : R_{\rhob_x}(N)/J \to \FF[\epsilon]/(\epsilon^2)$ be a map and let $\rho := f \circ \rho'$.
To prove the lemma, it suffices to prove that if $\tr(\rho)=\tr(\rhob_0)$ and $\det(\rho) = \det(\rhob_0)$, then $\rho \simeq \rhob_x$.

For every $1 \leq i \leq s$, let $\bar{w_i} := w_i \pmod{J}$ and $\overline{P_i} := P_i \pmod{J}$.
For every $s+1 \leq j \leq r$, let $\bar{v_j} := v_j \pmod{J}$ and $\overline{Q_j} := Q_j \pmod{J}$.

Since $\tr(\rho)=\tr(\rhob_0)$ and $\rhob_0$ is unramified at $\ell_i$ for all $1 \leq i \leq r$, it follows that $\tr(\rho(gi_{\ell_i})) = \tr(\rho(g))$ for all $g \in G_{\QQ,Np}$ and all $1 \leq i \leq r$.
As $\rho := f \circ \rho'$, it follows that for all $g \in G_{\QQ,Np}$ and $1 \leq i \leq r$, $\tr(\rho'(gi_{\ell_i})) - \tr(\rho'(g)) \in \ker(f)$.

Let $h \in G_{\QQ,Np}$ be such that $\rhob_x(h) = \begin{pmatrix} 1 & \xi \\ 0 & 1 \end{pmatrix}$ for some $\xi \in \FF^{\times}$.
So for every $1 \leq i \leq s$, $\overline{P_i}\rho'(h)\overline{P_i}^{-1} = \begin{pmatrix} 1+a_i & b_i \\  c_i & 1+d_i \end{pmatrix}$ with $b_i \in (R_{\rhob_x}(N)/J)^{\times}$ and for every $s+1 \leq j \leq r$, $\overline{Q_j}\rho'(h)\overline{Q_j}^{-1} = \begin{pmatrix} 1+a_j & b_j \\  c_j & 1+d_j \end{pmatrix}$ with $b_j \in (R_{\rhob_x}(N)/J)^{\times}$ and $a_j, c_j, d_j \not\in (R_{\rhob_x}(N)/J)^{\times}$.

So for every $1 \leq i \leq s$, $$\tr(\rho'(h i_{\ell_i}) ) - \tr(\rho'(h)) = \tr(\overline{P_i}\rho'(h i_{\ell_i})\overline{P_i}^{-1} ) - \tr(\overline{P_i}\rho'(h)\overline{P_i}^{-1}) = b_i \bar{w_i},$$
and for every $s+1 \leq j \leq r$, $$\tr(\rho'(h i_{\ell_j}) ) - \tr(\rho'(h)) = \tr(\overline{Q_j}\rho'(h i_{\ell_j})\overline{Q_j}^{-1} ) - \tr(\overline{Q_j}\rho'(h)\overline{Q_j}^{-1}) = \nu_j \bar{v_j}$$ for some $\nu_j \in (R_{\rhob_x}(N)/J)^{\times}$.

So for all $1 \leq i \leq s$, $\bar{w_i} \in \ker(f)$ and for all $s+1 \leq j \leq r$, $\bar{v_j} \in ker(f)$. 
So the kernel of the surjective map $R_{\rhob_x}(N) \to \FF[\epsilon]/(\epsilon^2)$ obtained by composing the natural map $R_{\rhob_x}(N) \to R_{\rhob_x}(N)/J$ with $f$ contains $w_i$ for all $1 \leq i \leq s$ and $u_j, v_j$ for all $s+1 \leq j \leq r$.
 Hence, $\rho$ is unramified at $\ell_i$ for all $1 \leq i \leq r$. Thus, $\rho$ factors through $G_{\QQ,N_0p}$.

 Since $\dim(H^1(G_{\QQ,N_0p},\chibar))=1$, we know, by Lemma~\ref{kisinjams}, that the natural map $R^{\ps}_{\rhob_0}(N_0) \to R_{\rhob_x}(N_0)$ is surjective.
Therefore, we conclude that $\rho \simeq \rhob_x$ as $\rho$ is a representation of $G_{\QQ,N_0p}$ with $\tr(\rho)=\tr(\rhob_0)$ and $\det(\rho) = \det(\rhob_0)$.
This proves the lemma.
\end{proof}

\begin{lem}
\label{finite}
Suppose $\chibar_2$ is unramified at $p$ and $\chibar_1\chibar_2 = \bar\psi\omega_p^{k_0-1}$, with $\bar\psi$ unramified at $p$ and $1 < k_0 < p$. 
Suppose $\chibar|_{G_{\QQ_p}} \neq 1,\omega_p^{-1}|_{G_{\QQ_p}}$.
Let $k>2$ be an integer such that $k \equiv k_0 \pmod{p-1}$ and let $\alpha, \beta, \delta_k \in R_{\rhob_x}(N)$ be the elements defined in Lemma~\ref{ord}.
Then $R_{\rhob_x}(N)/(\alpha,\beta,\delta_k,u_{s+1},\cdots,u_r)$ is a finite $W(\FF)$-algebra of Krull dimension at most $1$.
\end{lem}
\begin{proof}
Let $\tau := \rho^{\univ} \pmod{(\alpha,\beta,\delta_k,u_{s+1},\cdots,u_r)}$.
By Lemma~\ref{surjective}, the map $$F_k : R^{\ps}_{\rhob_0}(N) \to R_{\rhob_x}(N_0)/(\alpha,\beta,\delta_k,u_{s+1},\cdots,u_r)$$ induced by $(\tr(\tau),\det(\tau))$ is surjective.
By Lemma~\ref{ord}, we know that $\tau|_{G_{\QQ_p}} = \begin{pmatrix} \eta_1 & * \\ 0 & \eta_2 \end{pmatrix}$, where $\eta_2$ is an unramified character, and $\det(\tau)|_{I_p} = \chi_p^{k-1}$.
Therefore, using Pan's finiteness result (\cite[Theorem $5.1.2$]{P}) and the proof of Proposition~\ref{finprop} for Case $2$, we get that $F_k$ factors through a quotient of $R^{\ps}_{\rhob_0}(N)$ which is finite over $W(\FF)$. This proves the lemma.
\end{proof}

\begin{lem}
\label{dimension}
Suppose $\dim(H^1(G_{\QQ,N_0p},\chibar)) =1$, $\chibar_2$ is unramified at $p$, $\chibar|_{G_{\QQ_p}} \neq 1,\omega_p^{-1}|_{G_{\QQ_p}}$, $\chibar_1\chibar_2 = \bar\psi\omega_p^{k_0-1}$, with $\bar\psi$ unramified at $p$ and $1 < k_0 < p$. Let $\ell$ be a prime such that $\chibar|_{G_{\QQ_{\ell}}} = \omega_p^{-1}|_{G_{\QQ_{\ell}}}$ and $p \nmid \ell - 1$.
Then the ring $R_{\rhob_x}(N_0\ell)$ has Krull dimension $4$.
\end{lem}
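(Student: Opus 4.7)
The plan is to mimic the proof of Corollary~\ref{dimcor} at level $N_0\ell$. First, B\"ockle's presentation \cite[Theorem $2.4$]{Bo2} gives
$$R_{\rhob_x}(N_0\ell) \simeq W(\FF)[[X_1,\ldots,X_m]]/I$$
with $m = \dim(H^1(G_{\QQ,N_0\ell p},\ad(\rhob_x)))$ and $I$ generated by at most $\dim(H^2(G_{\QQ,N_0\ell p},\ad(\rhob_x)))$ elements; since $\rhob_x$ is odd with $1$-dimensional endomorphism algebra, the global Euler characteristic formula gives $m - \dim(H^2) = 3$, so by the generalized Krull principal ideal theorem \cite[Theorem $10.2$]{E} the Krull dimension of $R_{\rhob_x}(N_0\ell)$ is at least $4$.

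For the upper bound, I would replay the GMA construction of \S\ref{gmasec} for the universal deformation of $\rhob_x$ on $G_{\QQ,N_0\ell p}$. Since $\chibar|_{G_{\QQ_p}} \neq 1,\omega_p^{-1}$ is the only input needed, this produces elements $\alpha,\beta,\gamma \in m_{R_{\rhob_x}(N_0\ell)}$ entirely analogous to those in Section~\ref{gmasec}. It then suffices to show that $S := R_{\rhob_x}(N_0\ell)/(p,\alpha,\beta,\gamma)$ is a finite $\FF$-algebra: another application of \cite[Theorem $10.2$]{E} will then force $\dim R_{\rhob_x}(N_0\ell) \leq 4$, completing the proof.

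The finiteness of $S$ should be proved exactly as in Case $1$ of Proposition~\ref{finprop}. The hypotheses of that case hold under our assumptions: from $\chibar_1\chibar_2 = \bar\psi\omega_p^{k_0-1}$ with both $\chibar_2$ and $\bar\psi$ unramified at $p$, one computes $\chibar|_{I_p} = \omega_p^{k_0-1}|_{I_p}$, which is nontrivial because $2 < k_0 < p$ forces $k_0 - 1 \not\equiv 0 \pmod{p-1}$; thus $\chibar$ is ramified at $p$. The class group vanishing $(\text{Cl}(F_{\chibar})/\text{Cl}(F_{\chibar})^p)[\omega_p\chibar^{-1}] = 0$ follows from the hypothesis $\dim(H^1(G_{\QQ,N_0p},\chibar))=1$ via Lemma~\ref{dimonelem}, and is independent of level. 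Using that $\delta_2 \in (p,\alpha,\beta,\gamma)$, the analogue of Lemma~\ref{selmlem} applied to $\rho^{\univ}\otimes\hat\chibar_2^{-1}\pmod{(p,\alpha,\beta,\gamma)}$ over $G_{\QQ,N_0\ell p}$ produces a surjection from the corresponding Selmer deformation ring $R^{\text{Sel}}_{\rhob'_x}$ onto $S$, and \cite[Theorem $6.1$]{SW1} applied at tame level $N_0\ell$ realizes this Selmer ring as a localization of a weight-two Hecke algebra at a non-Eisenstein maximal ideal, hence as a finite $W(\FF)$-algebra. Therefore $S$ is a finite $\FF$-algebra, as required.

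The main obstacle is verifying that \cite[Theorem $6.1$]{SW1} applies cleanly at the augmented tame level $N_0\ell$, rather than only at the conductor level $N_0$. All the other ingredients (B\"ockle's presentation, the GMA construction of \S\ref{gmasec}, Lemma~\ref{selmlem}, and the class group criterion of Lemma~\ref{dimonelem}) are essentially level-agnostic and carry over without modification from their use in Section~\ref{defosec}.
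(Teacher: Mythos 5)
There is a genuine gap in the upper-bound argument, and it concerns precisely the point that the paper's own proof treats most delicately. You claim that all ingredients of Proposition~\ref{finprop} are ``level-agnostic,'' but the very first step of that proof is not: it invokes the surjectivity of the map $R^{\ps}_{\rhob_0} \to R_{\rhob_x}$ (equivalently, that $R_{\rhob_x}$ is topologically generated by traces), which the paper obtains from \cite[Corollary~$1.4.4(2)$]{Ki3} \emph{using the hypothesis} $\dim(H^1(G_{\QQ,Np},\chibar))=1$. This is what makes $\phi : R^{\text{Sel}}_{\rhob'_x} \to S$ surjective in the proof of Proposition~\ref{finprop}. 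Now pass to level $N_0\ell$. If $p \nmid \ell^2-1$, then Lemma~\ref{dimonelem} still gives $\dim(H^1(G_{\QQ,N_0\ell p},\chibar))=1$, and indeed Corollary~\ref{dimcor} applies verbatim (this is the first case of the paper's proof). But if $p \mid \ell+1$, then $\omega_p|_{G_{\QQ_\ell}}$ has order $2$, so $\chibar|_{G_{\QQ_\ell}} = \omega_p^{-1}|_{G_{\QQ_\ell}} = \omega_p|_{G_{\QQ_\ell}}$, which violates condition (1) of Lemma~\ref{dimonelem}; hence $\dim(H^1(G_{\QQ,N_0\ell p},\chibar))>1$, the map $R^{\ps}_{\rhob_0}(N_0\ell) \to R_{\rhob_x}(N_0\ell)$ is no longer surjective, and the chain of implications giving finiteness of $S$ breaks down. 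Your proposed proof never addresses this case.

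The paper's actual proof handles the $p\mid\ell+1$ case by an entirely different route. It uses the GMA description of $\rho^{\univ}(I_\ell)$ from \cite[Lemma~$4.9$]{Bo1} to produce an extra element $u \in m_{R_{\rhob_x}(N_0\ell)}$, proves in Lemma~\ref{surjective} that $R^{\ps}_{\rhob_0}(N_0\ell)$ surjects only onto the quotient $R_{\rhob_x}(N_0\ell)/(u)$, and combines this with Pan's finiteness theorem to show $R_{\rhob_x}(N_0\ell)/(\alpha,\beta,\delta_k,u)$ is finite over $W(\FF)$ of dimension at most $1$. Since this amounts to killing $4$ elements rather than $3$, the upper bound $\dim \le 4$ does not follow directly; instead, the paper runs a case analysis on minimal primes $P$ of $R_{\rhob_x}(N_0\ell)$, using the relations $u(h_\ell-\phi)=0$ and $v(h_\ell-\phi^{-1})=0$ to show that every $P$ contains one of $u$, $v$, or $\phi+1$, and in each case that $R_{\rhob_x}(N_0\ell)/P$ has dimension at most $4$ (invoking Lemma~\ref{unramlem} to kill a potential extra dimension in the $v\in P$ subcase, and the congruence $\ell+1\equiv 0 \pmod p$ in the last subcase). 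None of this machinery appears in your sketch, and without it the finiteness of $S=R_{\rhob_x}(N_0\ell)/(p,\alpha,\beta,\gamma)$ (with only $3+1$ elements removed) is unjustified in the $p\mid\ell+1$ case.
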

\begin{proof}
Suppose $p \nmid \ell^2-1$. Since $\dim(H^1(G_{\QQ,N_0p},\chibar))=1$, Lemma~\ref{dimonelem} implies that $(\text{Cl}(K_{\chibar})/\text{Cl}(K_{\chibar})^p)[\omega_p\chibar^{-1}]=0$ and $\chibar|_{G_{\QQ_{q}}} \neq \omega_p|_{G_{\QQ_{q}}}$ for all primes $q \mid N_0p$.
From our assumptions on $\ell$, we know that $\chibar|_{G_{\QQ_{\ell}}} \neq \omega_p|_{G_{\QQ_{\ell}}}$.
Hence, Lemma~\ref{dimonelem} implies that $\dim(H^1(G_{\QQ,N_0\ell p},\chibar)) =1$. So Corollary~\ref{dimcor} implies that the Krull dimension of $R_{\rhob_x}(N_0\ell)$ is $4$.

Now suppose $p \mid \ell + 1$. 
From Lemma~\ref{boclem}, we know that $\rho|_{I_{\ell}}$ factors through the $\ZZ_p$-quotient of the tame inertia group at $\ell$ and we  can assume $\rho^{\univ}(g_\ell) = \begin{pmatrix} \phi_1 & 0 \\ 0  & \phi_2 \end{pmatrix}$ and $\rho^{\univ}(i_\ell) = \begin{pmatrix}  \sqrt{1+uv} & u \\ v & \sqrt{1+uv} \end{pmatrix}$.

Moreover, Lemma~\ref{boclem} also implies that there exist $f_{\ell}, h_{\ell} \in R_{\rhob_x}(N_0\ell)$ such that $\rho^{univ}(i_{\ell})^{\ell} = \begin{pmatrix} f_{\ell}  & uh_{\ell} \\ vh_{\ell} &  f_{\ell} \end{pmatrix}$.
Let $\phi = \phi_1\phi_2^{-1}$.
Therefore, the relation $\rho^{\univ}(g_{\ell}i_{\ell}g_{\ell}^{-1}) = \rho^{\univ}(i_{\ell})^{\ell}$ gives us $u(h_{\ell}-\phi) =0$ and $v(h_{\ell}-\phi^{-1})=0$.

Let $P$ be a minimal prime of $R_{\rhob_x}(N_0\ell)$. So $P$ contains either $u$ or $h_{\ell}-\phi$.
If $P$ contains $u$, then by Lemma~\ref{finite}, we see that $R_{\rhob_x}(N_0\ell)/(\alpha,\beta,\delta_k,P)$ is finite over $W(\FF)$ of Krull dimension at most $1$. So Theorem~\ref{krullthm} implies that the Krull dimension of $R_{\rhob_x}(N_0\ell)/P$ is at most $4$.

Suppose $P$ does not contain $u$. So $h_{\ell}-\phi \in P$. Now $P$ contains either $v$ or $h_{\ell} - \phi^{-1}$.
Suppose $v \in P$. 
From previous paragraph, we know that $R:=R_{\rhob_x}(N_0\ell)/(\alpha,\beta,\delta_k,u,P)$ is finite over $W(\FF)$ of Krull dimension at most $1$.
Suppose its Krull dimension is $1$.
Let $Q$ be a minimal prime of $R$. 
So $R/Q$ is isomorphic to a subring of $\overline{\QQ_p}$. 
Fix such an isomorphism to view $R/Q$ as a subring of $\overline{\QQ_p}$.
Let $\rho : G_{\QQ,N_0\ell p} \to \GL_2(R)$ be the representation obtained by composing $\rho^{\univ}$ with the natural surjective map $R_{\rhob_x}(N_0\ell) \to R$.
Let $\rho\Mod{Q} : G_{\QQ,Np} \to \GL_2(\overline{\QQ_p})$ be the representation obtained by composing $\rho$ with the surjective map $R \to R/Q$.

Since the images of $\alpha,\beta,\delta_k$ in $R$ are $0$, we see, from Lemma~\ref{ord}, that $\rho \Mod{Q}|_{G_{\QQ_p}} = \begin{pmatrix} \mu'_1 & *\\ 0 & \mu'_2 \end{pmatrix}$, where $\mu'_2$ is an unramified character.
Hence, by the main theorem of \cite{SW2} and Corollary~\ref{condlem}, $\rho\Mod{Q}$ is the $p$-adic Galois representation attached to a newform $f$ of weight $k$ whose tame level divides $N_0 \ell$.
Moreover, as the images of $u$ and $v$ in $R$ are $0$, we see that $\rho\Mod{Q}$ is unramified at $\ell$ and hence, $\ell$ does not divide the level of $f$.
Observe, from the definition of $h_{\ell}$, that $h_{\ell} \equiv \ell \pmod{uv}$. 
So the image of $\phi -\ell$ in $R/Q$ is $0$ and hence, $\rho_f|_{G_{\QQ_{\ell}}} = \chi \oplus \chi\chi_p$ for some unramified character $\chi : G_{\QQ_{\ell}} \to \overline{\QQ_p}^{\times}$. But Lemma~\ref{unramlem} gives a contradiction.
Therefore, the Krull dimension of $R$ is $0$.
So Theorem~\ref{krullthm} implies that the Krull dimension of $R_{\rhob_x}(N_0\ell)/P$ is at most $4$ in this case.

Now suppose $h_{\ell} - \phi^{-1} \in P$. Since $h_{\ell} - \phi \in P$, this means that $\phi^2-1 \in P$ which implies that $\phi + 1 \in P$ (since the image of $\phi$ in $\FF$ is $-1$). 
Since $h_{\ell} \equiv \ell \pmod{uv}$, we have $\ell + 1 \in (u,P)$.
Since $R_{\rhob_x}(N_0\ell)/(\alpha,\beta,\delta_k,u,P)$ is finite over $W(\FF)$, it has Krull dimension $0$ (as $\ell+1 \in (u,P)$).
If the Krull dimension $R_{\rhob_x}(N_0\ell)/P$ is $\kappa$, then, by Theorem~\ref{krullthm}, the Krull dimension of $R_{\rhob_x}(N_0\ell)/(\alpha,\beta,\delta_k,u,P)$ is at least $\kappa-4$.
Therefore, we conclude that the Krull dimension of $R_{\rhob_x}(N_0\ell)/P$ is at most $4$ in this case as well.
This proves the lemma.
\end{proof}


\subsection{Main result}
We will now prove a key proposition which describes the structure of $R_{\rhob_x}(N)$ in terms of the structure of $R_{\rhob_x}(N_0)$ (similar to Theorem~\ref{bocthm}). It will be used crucially in the proof of the main theorem of this section and we believe that it is also of independent interest. Note that we use the notation established in Lemma~\ref{boclem} and the first paragraph of \S\ref{ramsec}.
For every $s+1 \leq j \leq r$, let $\phi_j=\beta_j\alpha_j^{-1}$.

\begin{prop}
\label{strprop}
Suppose we are in Setup~\ref{levelsetup} as above and suppose $\chibar_2$ is unramified at $p$.
Let $n = \dim(H^1(G_{\QQ,N_0p},\text{Ad}(\rhob_x)))$ and $r$ and $s$ be the integers defined in Points~\eqref{bhaag4} and \eqref{bhaag5} of  Setup~\ref{levelsetup}.
Then there exist a surjective morphism $$\mathcal{F} : R_0:= W(\FF)\llbracket X_1,\cdots,X_n,W_1,\cdots,W_s,U_{s+1},\cdots,U_r,V_{s+1},\cdots,V_r \rrbracket \to R_{\rhob_x}(N)$$ and a subset $\mathcal{S}_0:= \{f_1,\cdots, f_{n-3},g_1,\cdots,g_s,h_{s+1},\cdots,h_r,h'_{s+1},\cdots,h'_{r}\}$ of $R_0,$ such that 
\begin{enumerate}
\item $\mathcal{F}(W_i) = w_i$ for all $1 \leq i \leq s$,
\item $\mathcal{F}(g_i) = \alpha_{i}-\ell_i\beta_{i}$ for all $1 \leq i \leq s$,
\item $\mathcal{F}(U_j) = u_j$ and $\mathcal{F}(V_j) = v_j$ for all $ s+1 \leq j \leq r$,
\item $\mathcal{F}(h_j) = h_{\ell_j} - \phi_j$ and $\mathcal{F}(h'_j) = h_{\ell_j} - \phi_j^{-1}$ for all $s+1 \leq j \leq r$,
\item $I_0 := \ker(\mathcal{F})$ is the ideal generated by the set $$\{f_1,\cdots,f_{n-3}\} \cup \{W_ig_i\}_{1 \leq i \leq s} \cup \{U_jh_j\}_{s+1 \leq j \leq r} \cup \{V_jh'_j\}_{s+1 \leq j \leq r},$$
\item 
$R_{\rhob_x}(N_0) \simeq W(\FF)[[X_1,\cdots,X_n]]/(\overline{f_1},\cdots,\overline{f_{n-3}})$ where $$f_i \pmod{(W_1,\cdots,W_s,U_{s+1},\cdots,U_r,V_{s+1},\cdots,V_r)} = \overline{f_i}.$$
\end{enumerate}
\end{prop}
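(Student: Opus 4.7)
The plan is to adapt B\"{o}ckle's strategy from \cite[Theorem 4.7]{Bo1} to our residually reducible setting, with Corollary~\ref{dimcor} serving as the primary structural input in place of the usual Taylor--Wiles results.

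First I apply Corollary~\ref{dimcor} to the smaller ramification set, where the assumed equality $\dim H^1(G_{\QQ,N_0p},\chibar)=1$ holds, to obtain a presentation $R_{\rhob_x}(N_0) \simeq W(\FF)[[X_1,\ldots,X_n]]/(\bar f_1,\ldots,\bar f_{n-3})$ with $n = \dim_{\FF} H^1(G_{\QQ,N_0p},\ad(\rhob_x))$. The quotient $G_{\QQ,Np} \twoheadrightarrow G_{\QQ,N_0p}$ induces a natural surjection $R_{\rhob_x}(N) \twoheadrightarrow R_{\rhob_x}(N_0)$, and by the local normal forms of \cite[Lemma~4.9]{Bo1} recalled in the text, its kernel is the closed ideal generated by $w_1,\ldots,w_s,u_{s+1},v_{s+1},\ldots,u_r,v_r$, since these elements vanish precisely when the universal deformation becomes unramified at every $\ell_i$.

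Lifting the generators $X_1,\ldots,X_n$ of $R_{\rhob_x}(N_0)$ to elements $\hat X_1,\ldots,\hat X_n \in R_{\rhob_x}(N)$, I define $\mathcal{F}: R_0 \to R_{\rhob_x}(N)$ by $X_i \mapsto \hat X_i$, $W_i \mapsto w_i$, $U_j \mapsto u_j$, $V_j \mapsto v_j$. Surjectivity follows from the previous paragraph together with topological Nakayama. Taking $f_i \in W(\FF)[[X_1,\ldots,X_n]] \subset R_0$ to be lifts of $\bar f_i$ and $g_i, h_j, h'_j$ to be any preimages in $R_0$ of $\alpha_i-\ell_i\beta_i$, $h_{\ell_j}-\phi_j$, $h_{\ell_j}-\phi_j^{-1}$ respectively, the tame relation $g_{\ell_i}i_{\ell_i}g_{\ell_i}^{-1} = i_{\ell_i}^{\ell_i}$ applied to the B\"{o}ckle normal forms yields the identities $w_i(\alpha_i-\ell_i\beta_i) = u_j(h_{\ell_j}-\phi_j) = v_j(h_{\ell_j}-\phi_j^{-1}) = 0$ in $R_{\rhob_x}(N)$. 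Thus the listed elements all lie in $I_0 = \ker(\mathcal{F})$.

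The \emph{main obstacle} is to show these elements actually generate $I_0$. The plan is a dimension count: local Tate duality at each $\ell_i$, combined with the hypothesis $\chibar|_{G_{\QQ_{\ell_i}}} = \omega_p^{-1}|_{G_{\QQ_{\ell_i}}}$, shows that each Case 1 prime ($i \leq s$) contributes $1$ to both $\dim H^1(G_{\QQ,\cdot},\ad(\rhob_x))$ and $\dim H^2(G_{\QQ,\cdot},\ad(\rhob_x))$, while each Case 2 prime ($j \geq s+1$) contributes $2$ to each. Combined with the Greenberg--Wiles global Euler characteristic formula, this yields $\dim H^1(G_{\QQ,Np},\ad(\rhob_x)) = n+s+2(r-s)$ and $\dim H^2(G_{\QQ,Np},\ad(\rhob_x)) = (n-3)+s+2(r-s)$, exactly matching the number of variables of $R_0$ and the number of exhibited relations. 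An application of \cite[Theorem~2.4]{Bo2} bounds the minimal relation count from above by $\dim H^2$. To upgrade this to equality, I show that $R_{\rhob_x}(N)$ is a local complete intersection of Krull dimension $4$ by iterating Lemma~\ref{dimension}: Case 1 primes are added first and handled directly by Corollary~\ref{dimcor}, since Lemma~\ref{dimonelem} ensures $\dim H^1(G_{\QQ,\cdot},\chibar)$ remains equal to $1$ under addition of such a prime (as $\chibar|_{G_{\QQ_{\ell_i}}} \neq \omega_p|_{G_{\QQ_{\ell_i}}}$ when $p \nmid \ell_i^2-1$); Case 2 primes are then added one at a time, using Pan's finiteness \cite[Theorem~5.1.2]{P} and the modularity input of \cite{SW2} exactly as in the proof of Lemma~\ref{dimension}. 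The resulting complete intersection property, together with matching dimension counts, forces the surjection $R_0/(f_i, W_i g_i, U_j h_j, V_j h'_j) \twoheadrightarrow R_{\rhob_x}(N)$ to be an isomorphism, yielding the stated description of $I_0$.
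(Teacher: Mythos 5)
Your cohomological dimension count is correct (each $\ell_i$ with $p\nmid\ell_i^2-1$ contributes $1$ to both $\dim H^1(G_{\QQ,\cdot},\ad(\rhob_x))$ and $\dim H^2$, and each with $p\mid\ell_i+1$ contributes $2$, via $H^0(G_{\QQ_{\ell_i}},\ad(\rhob_x)(1))$ and the invariance of the global Euler characteristic), and your identification of the relations $w_i(\alpha_i-\ell_i\beta_i)=u_j(h_{\ell_j}-\phi_j)=v_j(h_{\ell_j}-\phi_j^{-1})=0$ is also correct. But the final step does not follow. Knowing that $R_{\rhob_x}(N)$ is a local complete intersection of Krull dimension $4$ tells you that $I_0=\ker(\mathcal{F})$ is \emph{minimally} generated by exactly $n-3+s+2(r-s)$ elements. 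You then exhibit a subset $\mathcal{S}\subseteq I_0$ of that same cardinality and conclude it must generate. That inference is false in general: a set of the right size inside an ideal need not generate it (for example $\{x,xy\}\subset(x,y)$ in $k[[x,y]]$). To apply Nakayama you must show the images of $f_1,\dots,f_{n-3},W_ig_i,U_jh_j,V_jh'_j$ are linearly independent in $I_0/\mathfrak{m}_{R_0}I_0$, and nothing in the dimension bookkeeping gives you that. The same objection applies to the alternative reading that $R_0/(\mathcal{S})$ and $R_{\rhob_x}(N)$ are both Cohen--Macaulay of dimension $4$ and hence the surjection is an isomorphism: a surjection of equidimensional Cohen--Macaulay local rings need not be injective (e.g. $k[[x,y]]/(xy)\twoheadrightarrow k[[y]]$).

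The paper resolves precisely this point, and does so by a genuinely different organization of the argument: it inducts on $r$, feeding in \cite[Theorem 4.7]{Bo1} at each stage. At step $r$ it compares $\ker(\mathcal{F})$ against the already-understood kernel $\ker(\mathcal{G})$ of the map to $R_{\rhob_x}(N_0\ell_1)$ (where Lemma~\ref{dimension} gives exact dimension control), extracts a partial basis of $\ker(\mathcal{F})/\mathfrak{m}_{R_0}\ker(\mathcal{F})$ via Nakayama, and then rules out, one by one, the possibility that any $W_ig_i$, $U_jh_j$ or $V_jh'_j$ can be dropped, using Krull's principal ideal theorem to derive a contradiction (dimension $\geq 5$) against Lemma~\ref{dimension}. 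This prime-by-prime independence check is exactly what your proposal is missing. You would also need to actually prove $\dim R_{\rhob_x}(N)=4$ rather than ``iterate Lemma~\ref{dimension}'': Lemma~\ref{dimension} as stated treats only $R_{\rhob_x}(N_0\ell)$ for a single added prime, and its hypothesis $\dim H^1(G_{\QQ,N_0p},\chibar)=1$ fails once a prime with $p\mid\ell_j+1$ is in the ramification set. A correct argument would need a multi-prime version of the case analysis in Lemma~\ref{dimension}, which is not trivial; the paper instead obtains the dimension statement as an output of the inductive presentation (via Krull's PIT applied to the presentation), not an input to it.
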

\begin{proof}
We will prove the proposition by using induction on $r$. For $r=1$, the proposition is just a combination of Theorem~\ref{bocthm} and Corollary~\ref{dimcor}..
Assume the proposition is true for $r=m$.
Now suppose $r=m+1$.

From Theorem~\ref{bocthm} and the induction hypothesis, it follows that there exists a surjective map 
$$\mathcal{F} : R_0 = W(\FF) \llbracket X_1,\cdots,X_n,W_1,\cdots,W_s,U_{s+1},\cdots,U_{m+1},V_{s+1},\cdots,V_{m+1} \rrbracket \to R_{\rhob_x}(N)$$
satisfying points $(1)-(4)$ and $(6)$ of the proposition.


\textbf{Case $1$:} Suppose $s \neq 0$.
Then the induction hypothesis and Theorem~\ref{bocthm} imply that there exist $$f_1,\cdots,f_{n-3} \in R_0 \text{ and } F_2,\cdots,F_{s},G_{s+1},\cdots,G_{m+1},H_{s+1},\cdots,H_{m+1}\in (W_1)$$ such that $\ker(\mathcal{F})$ is generated by the set 
\begin{multline}
S := \{f_1,\cdots,f_{n-3}, W_1g_1\} \cup \{W_2g_2+F_2,\cdots, W_{s}g_{s}+F_{s}\}  \cup \\ \{U_{s+1}h_{s+1}+G_{s+1}, \cdots, U_{m+1}h_{m+1}+G_{m+1}\}  \cup \{V_{s+1}h'_{s+1}+H_{s+1},\cdots, V_{m+1}h'_{m+1}+H_{m+1}\}.
\end{multline}
Let $M_0$ be the maximal ideal of $R_0$ and $V$ be the $\FF$-vector subspace of $\ker(\mathcal{F})/M_0\ker(\mathcal{F})$ generated by the images of $f_1,\cdots, f_{n-3}, W_1g_1$.
Let $T$ be a subset of $\{f_1,\cdots,f_{n-3}, W_1g_1\}$ such that the images of the elements of $T$ in $\ker(\mathcal{F})/M_0\ker(\mathcal{F})$ form a basis of $V$.
So, by Nakayama's lemma, $\ker(\mathcal{F})$ is generated by the set $T \cup (S \setminus \{f_1,\cdots,f_{n-3}, W_1g_1\})$.

Note that $$\{W_2g_2,\cdots,W_{s}g_{s},U_{s+1}h_{s+1},\cdots,U_{m+1}h_{m+1},V_{s+1}h'_{s+1},\cdots,V_{m+1}h'_{m+1}\} \subset \ker(\mathcal{F})$$ which means $$\{F_2,\cdots,F_{s},G_{s+1},\cdots,G_{m+1},H_{s+1},\cdots,H_{m+1}\}\subset \ker(\mathcal{F}).$$
So $\ker(\mathcal{F})$ is generated by the set 
\begin{multline}
T \cup \{W_2g_2,\cdots,W_{s}g_{s}\}$ $\cup \{U_{s+1}h_{s+1},\cdots,U_{m+1}h_{m+1}\} \\ \cup \{V_{s+1}h'_{s+1},\cdots,V_{m+1}h'_{m+1}\} \cup \{ F_2,\cdots,F_{s},G_{s+1},\cdots,G_{m+1},H_{s+1},\cdots,H_{m+1}\}.
\end{multline}

Therefore, by Nakamyama's lemma, we can find a subset $T'$  of the set 
\begin{multline}
 \{W_2g_2,\cdots,W_{s}g_{s}\} \cup \{U_{s+1}h_{s+1},\cdots,U_{m+1}h_{m+1}\} \\ \cup \{V_{s+1}h'_{s+1},\cdots,V_{m+1}h'_{m+1}\}  \cup \{ F_2,\cdots,F_{s},G_{s+1},\cdots,G_{m+1},H_{s+1},\cdots,H_{m+1}\}
\end{multline} 
such that $T \cup T'$ is a set of minimal generators of $\ker(\mathcal{F})$.

As $T \cup (S \setminus \{f_1,\cdots,f_{n-3},W_1g_1\})$ is a set of generators of $\ker(\mathcal{F})$, it follows that $|T \cup T'|\leq |T \cup (S \setminus \{f_1,\cdots,f_{n-3},W_1g_1\})|$.
This means that $|T'| \leq 2m-(s-1)$.
If $W_ig_i \not\in T'$ for some $2 \leq i \leq s$, then $$T' \subset (W_1,\cdots,W_{i-1},W_{i+1},\cdots,W_{s},U_{s+1},\cdots,U_{m+1},V_{s+1},\cdots,V_{m+1})$$ as $$\{F_2,\cdots,F_{s},G_{s+1},\cdots,G_{m+1},H_{s+1},\cdots,H_{m+1}\} \subset (W_1).$$

This means $$\ker(\mathcal{F}) \subset (f_1,\cdots,f_{n-3},W_1,\cdots,W_{i-1},W_{i+1},\cdots,W_{s},U_{s+1},\cdots,U_{m+1},V_{s+1},\cdots,V_{m+1}).$$
Since the kernel of the map $R_{\rhob_x}(N) \to R_{\rhob_x}(N_0\ell_i)$ is the ideal generated by the set $$ \{w_{j'}\}_{1 \leq j' \leq s, j' \neq i} \cup \{u_j\}_{s+1 \leq j \leq m+1} \cup \{v_j\}_{s+1 \leq j \leq m+1},$$ we conclude, using Theorem~\ref{krullthm}, that the Krull dimension of $R_{\rhob_x}(N\ell_i)$ is at least $5$.
But Lemma~\ref{dimension} gives a contradiction to this. 
Therefore, $\{W_2g_2,\cdots,W_{s}g_{s}\} \subset T'$.

If $U_jh_j \not\in T'$ for some $s+1 \leq j \leq m+1$, then by the same logic as above we have $$\ker(\mathcal{F}) \subset (f_1,\cdots,f_{n-3},W_1,\cdots,W_{s},U_{s+1},\cdots,U_{j-1},U_{j+1},\cdots,U_{m+1},V_{s+1},\cdots,V_{m+1}).$$
Since the kernel of the map $R_{\rhob_x}(N) \to R_{\rhob_x}(N_0\ell_j)$ is the ideal generated by the set $$ \{w_i\}_{1 \leq i \leq s} \cup \{u_{j'}\}_{s+1 \leq j' \leq m+1, j' \neq j} \cup \{v_{j'}\}_{s+1 \leq j' \leq m+1, j' \neq j},$$ it follows that the kernel of the surjective map $R_0 \to R_{\rhob_x}(N_0\ell_j)$ obtained by composing the map $\mathcal{F}$ with $R_{\rhob_x}(N) \to R_{\rhob_x}(N_0\ell_j)$ is the ideal generated by the set 
\begin{multline}
\{f_1,\cdots,f_{n-3},W_1,\cdots,W_{s}\} \cup \{U_{s+1},\cdots,U_{j-1},U_{j+1},\cdots,U_{m+1}\} \\ \cup \{V_{s+1},\cdots,V_{j-1},V_{j+1},\cdots,V_{m+1},V_{j}h'_j\}.
\end{multline}
So we conclude, using Theorem~\ref{krullthm}, that the Krull dimension of $R_{\rhob_x}(N\ell_j)$ is at least $5$.
But Lemma~\ref{dimension} gives a contradiction to this. 

Therefore, $\{U_{s+1}h_{s+1},\cdots,U_{m+1}h_{m+1}\} \subset T'$. By repeating the same logic after replacing $U_j$'s with $V_j$'s everywhere, we get that $\{V_{s+1}h'_{s+1},\cdots,V_{m+1}h'_{m+1}\} \subset T'$.

 As $|T'| \leq 2m-(s-1)$, it follows that $$T'=\{W_2g_2,\cdots,W_{s}g_{s},U_{s+1}h_{s+1},\cdots,U_{m+1}h_{m+1},V_{s+1}h'_{s+1},\cdots,V_{m+1}h'_{m+1}\}.$$
Note that $\ker(\mathcal{F})$ is generated by $T \cup T'$ and $T \subset \{f_1,\cdots,f_{n-3},W_1g_1\}$.
Hence, we get that $\ker(\mathcal{F})$ is generated by the set $$\{f_1,\cdots,f_{n-3},W_1g_1,\cdots,W_{s}g_{s},U_{s+1}h_{s+1},\cdots,U_{m+1}h_{m+1},V_{s+1}h'_{s+1},\cdots,V_{m+1}h'_{m+1}\}$$ which proves the proposition in this case.

\textbf{Case $2$:} Suppose $s = 0$. The proof in this case is very similar to the proof in the previous case. So we only give a brief sketch.
From Theorem~\ref{bocthm} and induction hypothesis, it follows that there exist $G_{2},\cdots,G_{m+1},H_{2},\cdots H_{m+1} \in (U_{1},V_{1})$ such that $\ker(\mathcal{F})$ is generated by 
\begin{multline}
S:= \{f_1,\cdots,f_{n-3},U_{1}h_{1}, V_{1}h'_{1}\} \cup \{U_{2}h_{2}+G_{2},\cdots, U_{m+1}h_{m+1}+G_{m+1}\} \\ \cup \{V_{2}h'_{2}+H_{2},\cdots,V_{m+1}h'_{m+1}+H_{m+1}\}.
\end{multline}
Let $V$ be the $\FF$-vector subspace of $\ker(\mathcal{F})/M_0\ker(\mathcal{F})$ generated by the images of $f_1,\cdots, f_{n-3}, U_1h_1, V_1h'_1$.
Let $T$ be a subset of $\{f_1,\cdots,f_{n-3}, U_1h_1, V_1h'_1\}$ such that the images of the elements of $T$ in $\ker(\mathcal{F})/M_0\ker(\mathcal{F})$ form a basis of $V$.
So, by Nakayama's lemma, $\ker(\mathcal{F})$ is generated by the set $T \cup (S \setminus \{f_1,\cdots,f_{n-3}, U_1h_1, V_1h'_1\})$.

Note that $$\{U_{2}h_{2},\cdots, U_{m+1}h_{m+1},V_{2}h'_{2},\cdots,V_{m+1}h'_{m+1}\} \subset \ker(\mathcal{F}).$$
So $\ker(\mathcal{F})$ is generated by the set 
\begin{multline}
\{f_1,\cdots,f_{n-3},U_{1}h_{1}, V_{1}h'_{1}\} \cup \{U_{2}h_{2},\cdots, U_{m+1}h_{m+1},V_{2}h'_{2},\cdots,V_{m+1}h'_{m+1}\} \\ \cup \{G_{2},\cdots,G_{m+1},H_{2},\cdots,H_{m+1}\}.
\end{multline}

So by Nakayama's lemma, we can find a subset $T'$ of $$\{U_{2}h_{2},\cdots, U_{m+1}h_{m+1},V_{2}h'_{2},\cdots,V_{m+1}h'_{m+1}\} \cup \{G_{2},\cdots,G_{m+1},H_{2},\cdots,H_{m+1}\}$$ which will extend the set $T$ to a set of minimal generators of $\ker(\mathcal{F})$.

Since $T \cup (S \setminus \{f_1,\cdots,f_{n-3}, U_1h_1, V_1h'_1\})$ is a set of generators of $\ker(\mathcal{F})$, we conclude that $|T'| \leq 2m$.
Using the same argument that we used in the previous case, we conclude that $U_jh_j, V_jh'_j \in T'$ for all $2 \leq j \leq m+1$.
Therefore, $$T' = \{U_{2}h_{2},\cdots, U_{m+1}h_{m+1},V_{2}h'_{2},\cdots,V_{m+1}h'_{m+1}\}.$$
As $\ker(\mathcal{F})$ is generated by the set $T \cup T'$ and $T \subset \{f_1,\cdots,f_{n-3},U_{1}h_{1},V_1h'_1\}$, we get the proposition in this case.
\end{proof}

We are now ready to prove the main theorem of this section. Recall that we have defined $N'_0$ in Point~\eqref{bhaag2} of Setup~\ref{levelsetup}.
Thus, if $f$ is a newform of level $M$ such that $\rho_f$ lifts $\rhob_0$ and $\ell$ is a prime dividing $N_0$, then $v_{\ell}(M) = v_{\ell}(N'_0)$ (see also Corollary~\ref{condlem}).
\begin{thm}
\label{levelthm}
Let $\ell_1,\cdots,\ell_r$ be primes satisfying the hypothesis of Point~\eqref{bhaag4} of Setup~\ref{levelsetup}. 
Suppose $\chibar|_{G_{\QQ_p}} \neq 1, \omega_p^{-1}|_{G_{\QQ_p}}$, $\dim(H^1(G_{\QQ,N_0p},\chibar)) =1$, $\chibar_2$ is unramified at $p$ and $\chibar_1\chibar_2$ is ramified at $p$. 
Suppose $\chibar_1\chibar_2 = \bar\psi\omega_p^{k_0-1}$, with $\bar\psi$ unramified at $p$ and $1 < k_0 < p$. 
Then for every integer $k > 2$ such that $k \equiv k_0 \Mod{p-1}$, there exists an eigenform $f$ of level $N'_0\prod_{i=1}^{r}\ell_i$ and weight $k$ such that $\rho_f$ lifts $\rhob_0$ and $f$ is new at $\ell_i$ for every $1 \leq i \leq r$.
\end{thm}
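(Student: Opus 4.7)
The plan is to mimic the proof of Theorem~\ref{reducecor}, working with the deformation ring $R_{\rhob_x}(N)$ whose structure is described by Proposition~\ref{strprop}, and to carve out a ``Steinberg at each $\ell_i$'' quotient that is finite over $W(\FF)$ of positive Krull dimension. First I apply Lemma~\ref{ord} to $G_{\QQ,Np}$ to obtain elements $\alpha,\beta,\delta_k\in R_{\rhob_x}(N)$ so that $\rho^{\univ}\pmod{(\alpha,\beta,\delta_k)}$ is $p$-ordinary of the required weight $k$ and has unramified second diagonal character and determinant of the form $\epsilon_k\chi_p^{k-1}$ with $\epsilon_k$ of finite order unramified at $p$.

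Using the notation of Proposition~\ref{strprop} and the dichotomies recalled just before it (for $i\leq s$ the relation $w_i g_i=0$; for $j>s$ the relations $u_jh_j=v_jh'_j=0$, with $h_{\ell_j}\equiv\ell_j\pmod{u_jv_j}$), I will define the \emph{Steinberg branch} ideal
\[
  J \;:=\; (g_1,\ldots,g_s,\; v_{s+1},\ldots,v_r,\; h_{s+1},\ldots,h_r) \;\subset\; R_{\rhob_x}(N).
\]
In $R_{\rhob_x}(N)/J$ one has $\alpha_i=\ell_i\beta_i$ for $i\leq s$ and $\phi_j=h_{\ell_j}=\ell_j$ for $j>s$, which are exactly the unramified eigenvalue ratios forcing local Steinberg behaviour once the remaining off-diagonal parameters $w_i,u_j$ are non-zero. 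Moreover, the presentation of Proposition~\ref{strprop} shows that modulo $J$ the quotient is (the relevant quotient of) $R_{\rhob_x}(N_0)/(\bar g_1,\ldots,\bar g_s,\bar h_{s+1},\ldots,\bar h_r)[[W_1,\ldots,W_s,U_{s+1},\ldots,U_r]]$, so $w_i$ and $u_j$ are free formal parameters and cannot lie in any minimal prime.

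The key step is then the analogue of Corollary~\ref{fincor}: the ring $R_{\rhob_x}(N)/(J,\alpha,\beta,\delta_k)$ is a finite $W(\FF)$-algebra of Krull dimension exactly $1$. For the upper bound I will reduce mod $p$ and repeat the argument of Case~2 of Proposition~\ref{finprop}: the pseudo-representation on the quotient factors through $R^{\ps,\text{ord}}_{\chibar}/(p,T)$, which is finite over $\FF$ by \cite[Theorem~5.1.2]{P}. For the lower bound I compute directly from the presentation in Proposition~\ref{strprop} that $R_{\rhob_x}(N)/J$ has Krull dimension at least $4$ (as a complete intersection of the expected codimension), and then apply the generalised Krull's principal ideal theorem (\cite[Theorem~10.2]{E}) with the three elements $\alpha,\beta,\delta_k$.

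With this finiteness in place, I pick any minimal prime $P$ of $R_{\rhob_x}(N)/(J,\alpha,\beta,\delta_k)$; the resulting domain embeds into $\overline{\QQ_p}$, and the associated representation $\rho := \rho^{\univ}\pmod{P}$ is a $p$-ordinary lift of $\rhob_x$ with Steinberg local behaviour at every $\ell_i$ (since $w_i,u_j\notin P$ by the remark above). Applying the main theorem of \cite{SW2} yields a cuspidal newform $f$ of weight $k$ with $\rho_f\simeq\rho$. The Artin conductor of $\rho$ is divisible by $\prod_{i=1}^r \ell_i$, hence $f$ is new at every $\ell_i$; Lemma~\ref{condlem}, applied to $N=N'_0\prod\ell_i$ (which satisfies the lemma's hypotheses by the very definition of $N'_0$ and the assumption $p\nmid\ell_i-1$), bounds the tame level of $f$ by $N'_0\prod\ell_i$, giving the desired eigenform. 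The main technical obstacle will be the finiteness in Step~3 above, namely setting up the ordinary pseudo-deformation ring to which Pan's theorem applies while simultaneously carrying the Steinberg conditions at all $\ell_i$; the rest of the argument is a direct adaptation of the strategy already worked out in \S\ref{defosec} and \S\ref{proofsec}.
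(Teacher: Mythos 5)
The overall strategy is the same as the paper's (carve out a one-dimensional ``ordinary of weight $k$, Steinberg at each $\ell_i$'' quotient of $R_{\rhob_x}(N)$, use Pan's theorem for finiteness, then apply Skinner--Wiles), but the proposal makes a specific choice that breaks the argument, and it is exactly the choice the paper is careful to avoid.

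The paper's ideal $I_k$ kills, for each $j>s$, the parameters $u_j$ and $h'_j = h_{\ell_j}-\phi_j^{-1}$, whereas you kill $v_j$ and $h_j = h_{\ell_j}-\phi_j$. These two branches look symmetric residually (both force the Frobenius eigenvalue ratio to lift $\chibar(\mathrm{Frob}_{\ell_j})$), but they are \emph{not} interchangeable in the one place where the choice matters: the surjectivity of $R^{\ps}_{\rhob_0}(N)\to R_{\rhob_x}(N)/(\text{branch ideal})$, which is precisely Lemma~\ref{surjective} and is the whole reason the pseudo-deformation/Pan finiteness argument produces finiteness of the \emph{deformation-ring} quotient and not merely of the image of the pseudo-deformation ring. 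In the proof of Lemma~\ref{surjective}, one picks $h$ with $\rhob_x(h)=\begin{pmatrix}1&x\\0&1\end{pmatrix}$ and computes $\tr(\rho'(h i_{\ell_j}))-\tr(\rho'(h))$ after conjugation into Boston--B\"ockle coordinates: the result is
\[
(2+a_j+d_j)(\sqrt{1+u_jv_j}-1)\;+\;b_j\,v_j\;+\;c_j\,u_j,
\]
with $b_j$ a \emph{unit} (it reduces to $x\ne 0$) and $c_j$ a \emph{non-unit}. Killing $u_j$ leaves $b_j v_j$, which lets one conclude $v_j\in\ker(f)$ for any $\FF[\epsilon]$-point $f$ with residual trace; this is what reduces to $G_{\QQ,N_0p}$. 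Killing $v_j$ instead leaves $c_j u_j$ with $c_j\in m$, so the trace condition sees $u_j$ only to second order and you learn nothing --- the map $R^{\ps}_{\rhob_0}(N)\to R_{\rhob_x}(N)/(v_{s+1},\ldots,v_r)$ has no reason to be surjective, because the $u_j$-direction (upper-right inertial parameter at $\ell_j$, parallel to the residual cocycle $x$) is a priori trace-invisible. Your step ``the pseudo-representation on the quotient factors through $R^{\ps,\text{ord}}_{\chibar}/(p,T)$, which is finite'' then only bounds the image of the pseudo-deformation ring, not the quotient itself, and the finiteness fails.

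A second, smaller issue: the claim that ``$w_i$ and $u_j$ are free formal parameters and cannot lie in any minimal prime'' is not justified (the elements $g_i,h_j$ in the presentation of Proposition~\ref{strprop} are not, in general, power series in $X_1,\ldots,X_n$ alone, so modding out by $J$ does not leave a free power series ring over $R_{\rhob_x}(N_0)/(\cdots)$; and adding $(\alpha,\beta,\delta_k)$ destroys any such freeness in any case). It is also unnecessary. The paper never argues that the off-diagonal entries are non-zero on a component. Instead it records that $\alpha_i-\ell_i\beta_i\in Q$ for all $i$ (automatic from the ideal, via $h_{\ell_j}\equiv\ell_j\pmod{u_jv_j}$), so the semi-simplification of $\rho\Mod{Q}|_{G_{\QQ_{\ell_i}}}$ is $\chi_i\oplus\chi_i\chi_p$; cuspidality follows since $k>2$ makes $\chi_i$ of infinite order; and Lemma~\ref{unramlem} then forces $\ell_i$ to divide the level (and, combined with Lemma~\ref{condlem}, to divide it exactly once), giving newness at $\ell_i$. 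You should replace the ``free parameter'' step with this eigenvalue-ratio argument, and swap your branch ideal so that it kills $u_j,h'_j$ (not $v_j,h_j$) for $j>s$.
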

\begin{proof}
Let $N=N'_0\prod_{i=1}^{s}\ell_i\prod_{j=s+1}^{r}\ell_j^2$ and $k > 2$ be an integer such that $k \equiv k_0 \Mod{p-1}$.
Let $I_k$ be the ideal of $R_{\rhob_x}(N)$ generated by the set $$S:=\{\alpha,\beta,\delta_k,\alpha_1-\ell_1\beta_1,\cdots, \alpha_s-\ell_s\beta_s,u_{s+1},\cdots,u_r,h_{\ell_{s+1}}-\phi_{s+1}^{-1},\cdots, h_{\ell_r}-\phi_r^{-1}\}.$$
Here $\alpha,\beta,\delta_k \in R_{\rhob_x}(N)$ are the elements defined in Lemma~\ref{ord} and all the other elements of $S$ are from Proposition~\ref{strprop}.

Let $R_k(N):= R_{\rhob_x}(N)/I_k$ and $\rho : G_{\QQ,Np} \to \GL_2(R_k(N))$ be the representation obtained by composing $\rho^{\univ}$ with the natural surjective map $R_{\rhob_x}(N) \to R_k(N)$.
From Lemma~\ref{finite}, we get that $R_k(N)$ is finite over $W(\FF)$ of Krull dimension at most $1$.
On the other hand, combining Proposition~\ref{strprop} and Theorem~\ref{krullthm}, we get that the Krull dimension of $R_k(N)$ is at least $1$.
Therefore, we get that $R_k(N)$ is a finite $W(\FF)$-algebra of Krull dimension $1$.

Let $Q$ be a minimal prime of $R_k(N)$. From previous paragraph, we conclude that $R_k(N)/Q$ is isomorphic to a subring of $\overline{\QQ_p}$. Using this isomorphism, we view $\rho\Mod{Q}$ as a representation over $\overline{\QQ_p}$.
By definition of $R_k(N)$, $\rho \Mod{Q} : G_{\QQ,Np} \to \GL_2(\overline{\QQ_p})$ is a $p$-ordinary representation such that $\det(\rho\Mod{Q}) = \epsilon\chi_p^{k-1}$, where $\epsilon$ is a character unramified at $p$. In other words, $\rho \Mod{Q}|_{G_{\QQ_p}} = \begin{pmatrix} \eta'_1\chi_p^{k-1} & *\\ 0 & \eta'_2 \end{pmatrix}$, where $\eta'_1$ and $\eta'_2$ are unramified characters of $G_{\QQ_p}$. 
Hence, using the modularity lifting theorem of Skinner-Wiles (main theorem of \cite{SW2}), we get that $\rho\Mod{Q}$ is the $p$-adic Galois representation attached to a newform $f$ of weight $k$.
Moreover, Proposition~\ref{carayolprop} implies that the tame level of $f$ divides $N$.

Since $\rho_f = \rho \Mod{Q}$ is $p$-ordinary, it follows, from \cite[Proposition $3.6$]{Mo}, that $f$ is $p$-ordinary. From previous paragraph, we know that the nebentypus of $f$ is unramified at $p$. Hence, it follows, from Lemma~\ref{ordlem}, that $p$ does not divide the level of $f$. Thus, $f$ is an eigenform of level $N$ and weight $k$ such that $\rho_f$ lifts $\rhob_0$.

Note that $\alpha_i - \ell_i\beta_i \in Q$ for all $1 \leq i \leq r$ (since $h_{\ell_j} \equiv \ell_j \pmod{u_jv_j}$). So it follows that for all $1 \leq i \leq r$, the semi-simplification of $\rho \Mod{Q}|_{G_{\QQ_{\ell_i}}}$ is $\chi_i \oplus \chi_i\chi_p$ for some character $\chi_i$. Note that the order of $\chi_i$ is not finite as $k >2$. This means that $f$ is cuspidal.
Hence, by Lemma~\ref{unramlem}, we see that $\ell_i$ divides the level of $f$ and $\rho \Mod{Q}$ is ramified at $\ell_i$ for all $1 \leq i \leq r$.
Therefore, we have $\rho \Mod{Q}|_{G_{\QQ_{\ell_i}}} \simeq \begin{pmatrix} \chi_i\chi_p & * \\ 0 & \chi_i\end{pmatrix}$ with $* \neq 0$ for all $1 \leq i \leq r$.
As $\chi_i$ is an unramified character of $G_{\QQ_{\ell_i}}$, this implies that $\ell_i$ divides the level of $f$ exactly once and $f$ is new at $\ell_i$ for all $1 \leq i \leq r$. This finishes the proof of the theorem.
\end{proof}

We now record some immediate corollaries of Theorem~\ref{levelthm}:
\begin{cor}
\label{levelcor}
Suppose the hypotheses of Theorem~\ref{levelthm} hold. Let $\ell_1,\cdots,\ell_r$ be the primes as in Theorem~\ref{levelthm} and $k_0$ be as in Theorem~\ref{levelthm}. Moreover, assume that $p \nmid \phi(N_0)$. Then for every integer $k >2$ such that $k \equiv k_0 \Mod{p-1}$, there exists a newform $f$ of level $N_0\prod_{i=1}^{r}\ell_i$ and weight $k$ such that $\rho_f$ lifts $\rhob_0$.
\end{cor}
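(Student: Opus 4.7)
The plan is to reduce the corollary directly to Theorem~\ref{levelthm} by using the extra hypothesis $p\nmid\phi(N_0)$ to both collapse $N_0'$ to $N_0$ and to upgrade the eigenform produced there to a newform. Since $p\nmid N_0$ and $\phi(N_0)$ is divisible by $\ell-1$ for every prime $\ell\mid N_0$, the assumption $p\nmid\phi(N_0)$ implies that no prime $\ell\mid N_0$ satisfies $p\mid\ell-1$. Hence the set $S_0$ introduced in the setup of \S\ref{levelsec} is empty, so $N_0'=N_0$. Theorem~\ref{levelthm} therefore provides, for every integer $k\equiv k_0\Mod{p-1}$, an eigenform $f$ of weight $k$ and level $N_0\prod_{i=1}^{r}\ell_i$ such that $\rho_f$ lifts $\rhob_0$ and $f$ is new at each $\ell_i$.

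It remains to show that $f$ is in fact a newform of level $N_0\prod_{i=1}^{r}\ell_i$. Let $g$ denote the newform underlying $f$ and let $N'$ be its tame level, so $\rho_g=\rho_f$ lifts $\rhob_0$ and every prime divisor of $N'$ divides $N:=N_0\prod_{i=1}^{r}\ell_i$. I plan to apply Lemma~\ref{condlem} with this $N$: the assumption $p\nmid\phi(N_0)$ combined with $p\nmid\ell_i-1$ for each $i$ gives $p\nmid\phi(N)$, so the two hypotheses of Lemma~\ref{condlem} hold vacuously, yielding $N'\mid N$. Conversely, the newness of $f$ at $\ell_i$ forces $\ell_i\mid N'$ for each $i$, hence $\prod_{i=1}^{r}\ell_i\mid N'$; and the conductor computation of \cite[Proposition~2]{Ca}, applied to $\rho_g=\rho_f$ which lifts $\rhob_0$ with Artin conductor $N_0$, yields $N_0\mid N'$. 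Since $N_0$ and $\prod_{i=1}^{r}\ell_i$ are coprime, these divisibilities together force $N'=N$, so $f=g$ is a newform of the desired level.

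No serious obstacle is anticipated: the substantive content of the corollary is already contained in Theorem~\ref{levelthm} and Lemma~\ref{condlem}, and the single new hypothesis $p\nmid\phi(N_0)$ serves the dual bookkeeping purpose of emptying $S_0$ and rendering the side conditions of Lemma~\ref{condlem} vacuous, leaving only a routine conductor comparison to conclude.
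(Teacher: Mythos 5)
Your argument is correct in substance and follows the same route as the paper's terse proof: the extra hypothesis $p\nmid\phi(N_0)$ forces $S_0=\emptyset$ so $N_0'=N_0$, and the newness at each $\ell_i$ plus Carayol's conductor bound ($N_0\mid N'$) squeezes the tame level $N'$ of the underlying newform to exactly $N_0\prod_{i}\ell_i$.

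One caveat: the detour through Lemma~\ref{condlem} to obtain the upper bound $N'\mid N_0\prod_i\ell_i$ is both unnecessary and not actually licensed. Lemma~\ref{condlem} requires $\dim(H^1(G_{\QQ,Np},\chibar))=1$ for $N=N_0\prod_i\ell_i$; this is not one of the two enumerated side conditions you call vacuous, and it can genuinely fail in the present setting, since $\chibar|_{G_{\QQ_{\ell_i}}}=\omega_p^{-1}|_{G_{\QQ_{\ell_i}}}$ together with $p\mid\ell_i+1$ gives $\chibar|_{G_{\QQ_{\ell_i}}}=\omega_p|_{G_{\QQ_{\ell_i}}}$, which by Lemma~\ref{dimonelem} inflates $\dim(H^1(G_{\QQ,Np},\chibar))$ above $1$. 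Fortunately the bound you want is already free: Theorem~\ref{levelthm} hands you an eigenform of level $N_0\prod_i\ell_i$, which is coprime to $p$, so the underlying newform automatically has (tame) level dividing $N_0\prod_i\ell_i$. Drop the appeal to Lemma~\ref{condlem} and the proof is clean.
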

\begin{proof}
Since we are assuming $p \nmid \phi(N_0)$, we get that $N_0 = N'_0$. The corollary now follows by combining Theorem~\ref{levelthm} with the facts that $N_0$ is the tame Artin conductor of $\rhob_0$ and none of the $\ell_i$'s divide $N_0$.
\end{proof}

We now recall a conjecture of Billerey--Menares (\cite[Conjecture $3.2$]{BM}).

\begin{conjecture}[Billerey-Menares]
\label{bmconj}
Let $p$ be a prime and $k_0 \geq 4$ be an even integer such that $ k_0 < p-1$. Let $\ell_1,\cdots,\ell_r$ be primes such that $p \neq \ell_i$ for all $1 \leq i \leq r$.
Then there exists a newform $f$ of level $\Gamma_0(\prod_{i=1}^{r}\ell_i)$ and weight $k_0$ such that $\rho_f$ lifts $1 \oplus \omega_p^{k_0-1}$ if and only if at least one of the following conditions hold:
\begin{enumerate} 
\item $p \mid (\ell_i^{k_0}-1)(\ell_i^{k_0-2}-1)$ for all $1 \leq i \leq r$ and there exists a $1 \leq j \leq r$ such that $p \mid \ell_j^{k_0}-1$.
\item $p \mid (\ell_i^{k_0-2}-1)$ for all $1 \leq i \leq r$ and $p$ divides the numerator of $\frac{B_{k_0}}{k_0}$, where $B_{k_0}$ is the $k_0$-th Bernoulli number.
\end{enumerate}
\end{conjecture}

The following corollary partially answers Conjecture~\ref{bmconj} in some cases.
\begin{cor}
Let $k_0$ be an even integer such that $2 < k_0 < p-1$. Let $\ell_1,\cdots,\ell_r$ be primes such that $p \nmid \ell_i-1$ and $p \mid \ell_i^{k_0}-1$ for all $1 \leq i \leq r$. Let $k$ be an integer such that $k \equiv k_0 \Mod{p-1}$. Suppose one of the following conditions hold:
\begin{enumerate}
\item $p \nmid B_{p+1-k_0}$.
\item $p$ is a regular prime.
\item Vandiver's conjecture holds for $p$.
\item $p > 5$ and $k_0=4,6$.
\item $p>3$, $p \equiv 3 \pmod{4}$ and $k_0=\dfrac{p+1}{2}$.
\end{enumerate}
Then there exists a newform $f$ of level $\Gamma_0(\ell_1\cdots\ell_r)$ and weight $k$ such that $\rho_f$ lifts $1 \oplus \omega_p^{k_0-1}$.
\end{cor}
\begin{proof}
From the discussion just before Example~\eqref{eg1}, it follows that $$(\text{Cl}(\QQ(\zeta_p))/\text{Cl}(\QQ(\zeta_p))^p) [\omega_p^{2-k_0}] =0$$ if one of the conditions given above hold.
Therefore, by Lemma~\ref{dimonelem}, we get that $\dim(H^1(G_{\QQ,p},\omega_p^{k_0-1})) = 1.$
Note that $\omega_p^{k_0-1}|_{G_{\QQ_{\ell_i}}} = \omega_p^{-1}|_{G_{\QQ_{\ell_i}}}$ if and only if $\ell_i^{k_0} \equiv 1 \Mod{p}$. The corollary now follows from Corollary~\ref{levelcor} and the fact that $p \nmid \ell_i-1$ for all $1 \leq i \leq r$.
\end{proof}


\end{document}